\def\bb0{\mathbb{0}}
\def\RR{\mathbb{R}}
\def\beq{\begin{eqnarray}}
\def\eeq{\end{eqnarray}}
\def\bsp{\begin{equation}
\begin{split}}
\def\esp{\end{split}
\end{equation}}
\def\ba{\begin{align}}
\def\ea{\end{align}}
\newcommand{\lc}{\left(}
\newcommand{\rc}{\right)}
\newcommand{\lk}{\left[}
\newcommand{\rk}{\right]}
\theoremstyle{Def}
\numberwithin{Thm}{section}
\numberwithin{equation}{section}
\newtheorem{Theorem}{Theorem}
\newtheorem{theorem}[Theorem]{Theorem}
\newtheorem{definition}[Theorem]{Definition}
\newtheorem{lemma}[Theorem]{Lemma}
\newtheorem{corollary}[Theorem]{Corollary}
\newtheorem{proposition}[Theorem]{Proposition}
\newtheorem{example}[Theorem]{Example}
\newtheorem{assumption}[Theorem]{Assumption}
\newtheorem{remark}[Theorem]{Remark}
\newcommand {\bX} {{\bf{X}}}
\newcommand {\tP} {P}
\newcommand {\E} {\mathbb{E}}
\newcommand {\Var} {\text{Var}}
\newcommand {\NGGP} {\text{NGGP}}
\newcommand {\GDP} {\text{GDP}}
\newcommand {\Dir} {\text{Dir}}
\begin{document}

\def\spacingset#1{\renewcommand{\baselinestretch}%
{#1}\small\normalsize} \spacingset{1}


%

\title{\bf Large sample asymptotic analysis for normalized random measures with independent increments}
\author[1]{Junxi Zhang \thanks{junxi3@ualberta.ca}}
\author[2]{Yaozhong Hu \thanks{yaozhong@ualberta.ca}}

\affil[1, 2]{Department of Mathematical and Statistical Sciences,  University of Alberta, Edmonton, Alberta, Canada, T6G 2G1}

\maketitle

\bigskip
\begin{abstract}
Normalized random measures with independent increments represent a large class of Bayesian nonaprametric priors and are widely used in the Bayesian nonparametric framework.  In this paper,  we provide the posterior consistency analysis for  normalized random measures with independent increments (NRMIs) through the corresponding  L{\'e}vy intensities  
used to characterize the completely random measures in the construction of NRMIs.  Assumptions are introduced   on the L{\'e}vy intensities to analyse  the posterior consistency of NRMIs    and are  verified with multiple interesting examples. 
A focus of the paper is the Bernstein-von Mises theorem for the normalized generalized gamma process (NGGP) when the true distribution of the sample is discrete or continuous. When  the Bernstein-von Mises 
theorem is applied  to construct credible sets,  in addition  to  the usual form  
   there will be an additional  bias term  on the left endpoint  closely related to the number of atoms of  the true distribution when it is discrete.   
   We also discuss the affect of the estimators for the model parameters of the NGGP under the Bernstein-von Mises convergences.  
Finally, to further explain the necessity  of adding the  bias correction in constructing
credible sets, 
 we illustrate numerically  how the bias correction affects the coverage of the true value by the credible sets when the true distribution is discrete.
\end{abstract}
 
\noindent%
{\it Keywords:} normalized random measures with independent increments;  posterior consistency; Bernstein-von Mises theorem; normalized generalized gamma process; credible sets.
 
\spacingset{1} 
\section{Introduction}\label{sec:intro}
Bayesian nonparametrics has been undergone major investigation  due to its various applications in many areas, such as biology, economics, machine learning and so on. As a lavish class of Bayesian nonparametric priors, normalized random measures with independent increments (NRMIs),  introduced by \citep{Regazzini_2003}, include the  famous Dirichlet process \citep{ferguson1973}, the $\sigma$-stable NRMIs \citep{kingman1975}, the normalized inverse Gaussian process \citep{lijoi2005b}, the normalized generalized gamma process \citep{lijoi2003normalized, lijoi_2007},   and the generalized Dirichlet process \citep{lijoi2005a}. We refer to \citep{muller2004nonparametric, lijoi2010models, huzhangreview} as reviews of these processes with their properties and applications. 

In Bayesian nonparametric statistics, samples are drawn from a random probability measure that is equipped with a prior distribution. To be more precise, let $(\Omega, \mathcal{F},\mathbb{P})$ be any probability space, let $\mathbb{X}$ be a complete, separable metric space whose $\sigma$-algebra is denoted by $\mathcal{X}$ and let $(\mathbb{M}_{\mathbb{X}}, \mathcal{M}_{\mathbb{X}})$ be the space of all probability measures on $\mathbb{X}$. A sample $\bX=(X_1,\cdots,X_n)$ that takes values in $\mathbb{X}^n$ is drawn  iid from a random probability measure $P$ conditional on $P$, which follows a prior distribution $Q$ on $(\mathbb{M}_{\mathbb{X}}, \mathcal{M}_{\mathbb{X}})$. That is to say,
\begin{align}
 X_1,\cdots, X_n | P \overset{iid}{\sim} P; \quad\quad P \sim Q.  \label{Bayesian nonparametric}
\end{align}
Two natural questions under the literature are raised as follows.
\begin{itemize}
\item[] (i) A frequentist analysis of the Bayesian consistency \citep{freedman1983inconsistent}: by assuming the ``true" distribution of $\bX$ is $P_0$, we are interested in whether the posterior law, that is the conditional law of $P|\bX$, denoted by $Q_n$, converges to $\delta_{P_0}$, the Dirac measure with point mass at the ``true" distribution, as $n \rightarrow \infty$.
\item[] (ii) What is the limiting distribution of centered and rescaled $P|\bX$? In particular, is there a Bernstein-von Mises like theorem and central limit theorem for $P$? If so, what is the limit process of $\sqrt{n}(P|\bX-\E[P|\bX])$?
\end{itemize}
The above two questions are always very important in statistics, as the posterior consistency can guarantee the model behaves ``good" when the sample size is large, and the limiting distribution of the posterior process is the key to construct Bayesian credible sets and conduct hypothesis tests.

Many inspiring works corresponding  to the above questions have been done. Referred to question (i), \citep{james2008} obtains  the posterior consistency analysis  of the two-parameter Poisson-Dirichlet process, which is not an NRMI, but closely  related to NRMIs \citep{pitman1997, perman1992, ghosal2017fundamentals}.  The posterior consistency of the species sampling priors \citep{pitman1996some, aldous1985exchangeability} and the Gibbs-type priors \citep{gnedin2006exchangeable} are discussed in \citep{jang2010} and \citep{Blasi_2013}.  It is worth to point out that there are overlaps among the species sampling priors, the Gibbs-type priors and the homogeneous NRMIs. Whereas, non-homogeneous NRMIs are totally different  from the species sampling priors and the Gibbs-type priors.  As for question (ii), the Bernstein-von Mises results  have been established for the Dirichlet process \citep{lo1983,lo1986remark, ray2021bernstein, huzhang2022functional} and for the two-parameter Poisson-Dirichlet process \citep{james2008, franssen2022bernstein}. Along the same line, we would like to answer the two addressed questions when $P$ is an NRMI. 

Since  NRMIs are  constructed by the normalization of completely random measures \citep{kingman1967completely, kingman1993poisson} associated with  their L{\'e}vy intensities (see e.g.,    section \ref{sec.NRMIs}), it is quite natural to study their properties based on the  corresponding L{\'e}vy intensities.  In this work, we discuss the posterior consistency of non-homogeneous NRMIs (including  the homogeneous case
as a particular case) and provide a simple condition to guarantee the posterior consistency of non-homogeneous NRMIs. As a result, when $P_0$ is continuous, the posterior consistency doesn't hold for NRMIs generally, and when $P_0$ is discrete, the posterior consistency holds as long as our proposed condition is satisfied. 

Furthermore, we  obtain the Bernstein-von Mises theorem for the normalized generalized gamma process (NGGP), which is a flexible class of  Bayesian nonparametric priors includes the Dirichlet process, the normalized inverse-Gaussian process and the $\sigma-$stable process. Through the posterior consistency analysis, the NGGP is posterior consistent when the true distribution $P_0$ is discrete or   when the true distribution $P_0$ is continuous and the parameter $\sigma$ of the  
NGGP  goes to   $0$.   The case that $\sigma \rightarrow 0$ would reduce the NGGP to the Dirichlet process. Thus, we should emphasis the case when the true distribution $P_0$ is discrete.  However, there will be  a bias term on the left hand side of the   Bernstein-von Mises theorem for the NGGP when $P_0$ is discrete. It turns out that the bias term may not go to $0$ when $n \rightarrow \infty$. Thus, in order to construct the ``correct" Bayesian credible sets that cover the true parameter value, we suggest a bias correction to mitigate the bias term. The comparison of credible intervals with bias correction and without bias correction is given in the numerical illustration. In the application, the model parameters of NGGP are chosen by some data driven estimators and  we show that the Bayesian estimator or maximum likelihood estimators 
of the model parameters of the NGGP won't affect the convergences in the Bernstein-von Mises results. 

The outline of this  paper is as follows. In  Section \ref{sec.NRMIs}, we recall  the construction of the NRMIs and their posterior distributions. In Section \ref{sec.posterior consistency},   we  discuss the posterior consistency of the homogeneous NRMIs and  introduce   a simple assumption on  the corresponding L{\'e}vy intensities to guarantee the posterior consistency of the homogeneous NRMIs.  Examples  for several well-known Bayesian nonparametric priors are given to verify  the applicability of the introduced  assumption. In Section \ref{sec. BVM}, we derive the Bernstein-von Mises theorem for the NGGP  and provide an  analysis of the bias correction with an numerical illustration.  Finally, in  Section \ref{sec.discussion}, we provide  a discussion of our results and some ideas that can be studied in the future. In order to ease the flow of the ideas, we delay   the proofs to  the supplementary materials (Section \ref{supplementary}).

\section{Normalized random measures with independent increments}\label{sec.NRMIs}
\subsection{Constructions of NRMIs}
We start by recalling the notions of completely random measures (see e.g.,   \citep{kingman1967completely, kingman1993poisson} and references therein for more details), which play important roles in the construction of NRMIs.
\begin{definition}\label{def: CRM}
Let $\mu$ be a measurable function defined on $(\Omega, \mathcal{F}, \mathbb{P})$ that takes values in $(\mathbb{M}_{\mathbb{X}}, \mathcal{M}_{\mathbb{X}})$. We call $\mu$ is a completely random measure (CRM) if the random variables $\mu(A_1), \cdots, \mu(A_d)$ are mutually independent, for any pairwise disjoint sets $A_1,\cdots,A_d$,  where $d\geq 2$ is a finite integer.
\end{definition} 

The completely random measures play an important role in   Bayesian nonparametric priors  and we   refer to  \citep{Regazzini_2003,lijoi2010models} 
for   more detailed  discussion. 

One way to construct  NRMIs  is through  Poisson random measure 
explained as follows. Denote $\mathbb{S}=\mathbb{R}^+ \times \mathbb{X}$ and denote  its Borel $\sigma$-algebra by $\mathcal{S}$.  A Poisson random measure $\tilde{N}$ on  $\mathbb{S}$   with finite intensity measure $\nu(ds,dx)$ is a random measure  from   $ \Omega\times \mathbb{S} $ to $\mathbb{R}_+$   satisfying 
\begin{enumerate}
\item[(i)] $\tilde{N}(A) \sim \text{Poisson}(\nu(A))$ for any $A$ in $\mathcal{S}$; 
\item[(ii)] for any pairwise disjoint sets $A_1,\cdots,A_m$ in $\mathcal{S}$, the random variables $\tilde{N}(A_1), \cdots, \tilde{N}(A_m)$ are mutually independent.  
\end{enumerate}
The Poisson intensity measure $\nu$  satisfies  the condition (see  \citep{dalay2008book} for details of Poisson random measures) that 
\begin{align*}
\int_0^{\infty}\int_{\mathbb{X}} \min (s,1)\nu(ds,dx) <\infty\,.
\end{align*}
Let $(\mathbb{B}_{\mathbb{X}},\mathcal{B}_{\mathbb{X}})$ be the space of  finite  measures on $(\mathbb{X},\mathcal{X})$ endowed with the topology of weak convergence and  let  $\tilde{\mu}$ be the random measure defined on $(\Omega, \mathcal{F}, \mathbb{P})$ that takes values in $(\mathbb{B}_{\mathbb{X}},\mathcal{B}_{\mathbb{X}})$ defined as follows,
\begin{align}
\tilde{\mu}(A):=\int_0^{\infty}\int_A s \tilde{N}(ds, dx), \quad \forall A \in \mathcal{X}\,. \label{random measure definition}
\end{align}
It is trivial to verify that $\tilde{\mu}$ is a completely random measure.  It is also well-known that for any $B \in \mathcal{X}$, $\tilde{\mu}(B)$ is discrete and is  uniquely characterized by its Laplace transform as follows: 
\begin{align}
\mathbb{E}\left[e^{-\lambda \tilde{\mu}(B)}\right]=\exp \left\{-\int_0^{\infty}\int_B\left[ 1-e^{-\lambda s}\right]\nu(ds, dx)\right\}\,.   \label{laplace}
\end{align}
The measure $\nu$ is called   the \textit{L{\'e}vy intensity} of $\tilde{\mu}$ and we denote the Laplace exponent 
by 
\begin{equation}
 \psi_{B}(\lambda)=\int_0^{\infty}\int_B\left[ 1-e^{-\lambda s}\right]\nu(ds, dx)\,.  \label{e.psi}
 \end{equation}  
From the Laplace transform in \eqref{laplace}, we aware that the completely random measure $\tilde{\mu}$ is characterized  completely by its L{\'e}vy intensity $\nu$,    which usually takes the following forms in the literature:  
\begin{itemize}
\item[(a)]   $\nu(ds,dx)= \rho(ds)\alpha(dx)$, where $\rho: \mathcal{B}(\mathbb{R}^+)\to  \mathbb{R}^+$ is some measure on $\mathbb{R}^+$ and $\alpha$ is a non-atomic measure on $(\mathbb{X},\mathcal{X})$ so  that $\alpha(\mathbb{X})=a <\infty$.   The corresponding $\tilde{\mu}$ is called \textit{homogeneous} completely random measure. \label{homogeneous}
\item[(b)]   $\nu(ds,dx)= \rho(ds|x)\alpha(dx)$, where $\rho$ is defined on $ \mathcal{B}(\mathbb{R}^+)\times \mathbb{X}$ such that for any $x\in \mathbb{X}$, $\rho(\cdot|x)$ is a $\sigma$-finite measure on $\mathcal{B}(\mathbb{R}^+)$ and for any $A \in \mathcal{X}$, $\rho(A|\cdot )$ is $\mathcal{B}(\mathbb{R}^+)$ measurable. The corresponding $\tilde{\mu}$ is called \textit{non-homogeneous} completely random measure.\label{non-homogeneous}
\end{itemize}
It is obvious that the case (a) is a special case of case (b).   Usually,  we assume that $\alpha$ is a finite measure so  we   may write  $\alpha(dx)=aH(dx)$ for some   probability measure $H$ and some constant $a=\alpha(\mathbb{X}) \in (0, \infty)$.

To construct NRMIs,   the completely random measure will be normalized, and thus one needs the total mass $\tilde{\mu}(\mathbb{X})$ to be finite and positive almost surely. This happens under  the condition  that $\rho(\mathbb{R}^+)=\infty$ in homogeneous case and  that $\rho(\mathbb{R}^+|x)=\infty$ for all $x\in \mathbb{X}$  in non-homogeneous case \citep{regazzini2002}.   Under  the above conditions, an NRMI $P$ on $(\mathbb{X},\mathcal{X})$ is a random probability measure defined by
\begin{align}
P(\cdot)=\frac{\tilde{\mu}(\cdot)}{\tilde{\mu}(\mathbb{X})}\,. \label{NRMIs}
\end{align}
$P$ is discrete due to the discreteness of $\tilde{\mu}$. For notional simplicity, we let   $T=\tilde{\mu}(\mathbb{X})$  and let $f_{T}(t)$ be the density of $T$ throughout  this paper. 
\subsection{Posterior of NRMIs}\label{subsection: posterior of NRMIs}
We will recall the posterior analysis \citep{james2009} of NRMIs, which is a key topic in Bayesian nonparametric analysis.  Let $P$ be  an  
NRMI on  $\mathbb{X}$. A sample of size $n$ from $P$  as in \eqref{Bayesian nonparametric} is an  exchangeable sequence of random variables $\textbf{X}=(X_i)_{i = 1}^n$   defined on $(\Omega,\mathcal{F},\mathbb{P})$  and  taking values in $\mathbb{X}^n$, such that given $P$, $(X_i)_{i \geq 1}$ are drawn iid from distribution $P$, i.e.,
\begin{align}
\mathbb{P}[X_1\in A_1, \cdots, X_n \in A_n|P]=\prod_{i=1}^nP(A_i)\,. \label{sample}
\end{align}
Let $\textbf{Y}=(Y_j)_{j=1}^{n(\pi)}$ be the distinct observations of the sample $\textbf{X}$ and  let $n(\pi)$ be the number of unique values of $\textbf{X}$.  This means,  $\pi=(i_1,\cdots, i_{n_1}, \cdots, i_{n_{\pi(n)-1}}, 
\cdots, i_{n_{n(\pi)}}) $ is the partition of $\{1,\cdots,n\}$ of size $n(\pi)$. The number of the $j$th set of the partition is $n_j$, so that $\sum_{j=1}^{n(\pi)} n_j=n$, and $Y_1:=X_{i_1}=\cdots =X_{i_{n_1}}, 
\cdots, Y_{n(\pi)}:=X_{n_{\pi(n)-1}+1}=\cdots=X_{n_{\pi(n) } }$.  
Let 
\begin{equation}
 \tau_{k}(u,Y)=\int_0^{\infty}s^{k}e^{-us}\rho(ds|Y)  \quad\hbox{for any positive integer $k$ and $Y \in \mathbb{X}$}.
\label{e.tau} 
\end{equation}   

With these notations, the  posterior distribution of 
 $P$ conditional on the observations of the sample 
$X_1, \cdots, X_n$ is given by the following theorem.
\begin{theorem}[\cite{james2009}] \label{posterior}
Let $P$ be an NRMI with intensity $\nu(ds,dx)=\rho(ds|x)\alpha(dx)$.  The posterior distribution of $P$, given a latent random variable $U_n$, is an NRMI that coincides in distribution with the random measure
\begin{align}
\kappa_n \frac{\tilde{\mu}_{(U_n)}}{T_{(U_n)}} + (1-\kappa_n) \sum_{j=1}^{n(\pi)}\frac{J_j \delta_{Y_j}}{\sum_{j=1}^{n(\pi)}J_j}\,,\label{posterior distribution}
\end{align}
where 
\begin{enumerate}
\item[(i)] The random variable $U_n$ has density
\begin{align}
f_{U_n}(u)=\frac{u^{n-1}}{\Gamma(n)}\int_0^{\infty}t^ne^{-ut}f_T(t)dt\,;   \label{density of U_n}
\end{align}
\item[(ii)] Given $U_n$,  $\tilde{\mu}_{(U_n)}$ is the conditional completely random measure of $
\tilde{\mu}$  with   the  L{\'evy} intensity   $\nu_{(U_n)}=e^{-U_ns}\rho(ds|x)\alpha(dx)$;  
\item[(iii)]   $\{ J_1, \cdots, J_{n(\pi)}\}$  are random variables   depending on $U_n$ and $Y_j$   and  having density
\begin{align}
f_{J_j}(s|U_n=u, \textbf{X}) = \frac{s^{n_j}e^{-us}\rho(s|Y_j)}{\int_0^{\infty}s^{n_j}e^{-us}\rho(ds|Y_j)}\,; 
\end{align}
\item[(iv)] The random elements $\tilde{\mu}_{(U_n)}$ and $J_j$, $j\in \{1, \cdots, n(\pi)\}$ are independent;    
\item[(v)]   $T_{(U_n)}=\tilde{\mu}_{(U_n)}(\mathbb{X})$
and  $\kappa_n=\frac{T_{(U_n)}}{T_{(U_n)}+\sum_{j=1}^{n(\pi)}J_j}$;  
\item[(vi)]   The conditional density of $U_n$ given $\textbf{X}$ is given by
\begin{align}
f_{U_n|\textbf{X}}(u|\textbf{X}) \propto u^{n-1}e^{-\psi(u)} \prod_{j=1}^{n(\pi)}\tau_{n_j}(u,Y_j)\,.\label{posterior density of U_n}
\end{align} 
\end{enumerate}
\end{theorem}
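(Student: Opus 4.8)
The plan is to derive the posterior law of the unnormalized completely random measure $\tilde\mu$ given the sample $\mathbf{X}$ and then normalize, using a latent-variable augmentation that removes the random denominator $T^{-n}$ together with Palm (Mecke) calculus for the underlying Poisson random measure $\tilde N$. First I would write the joint law of $(\tilde\mu,\mathbf{X})$: since conditionally on $\tilde\mu$ the $X_i$ are iid from $P=\tilde\mu/T$ by \eqref{sample}, for a test function $g$ one has
\[
\mathbb{E}\Big[F(\tilde\mu)\textstyle\prod_{i=1}^n g(X_i)\Big]=\mathbb{E}\Big[F(\tilde\mu)\,T^{-n}\textstyle\prod_{i=1}^n\int g\,d\tilde\mu\Big],
\]
so that the entire difficulty is concentrated in the factor $T^{-n}$, where $T=\tilde\mu(\mathbb{X})$. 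I would linearize it with the gamma identity $T^{-n}=\Gamma(n)^{-1}\int_0^\infty u^{n-1}e^{-uT}\,du$; equivalently, introduce a latent variable $U_n$ with $U_n\mid T=t\sim\mathrm{Gamma}(n,\text{rate }t)$, whose marginal density is exactly \eqref{density of U_n} (item (i)) and whose conditional law given $\mathbf{X}$ will turn out to be \eqref{posterior density of U_n}. After this step the problematic denominator is replaced by the exponential tilt $e^{-uT}$.

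The heart of the argument is the computation of the tilted mixed moment $\mathbb{E}\big[e^{-uT}\prod_{i=1}^n\tilde\mu(dx_i)\big]$. Grouping the sample into its $n(\pi)$ distinct values $Y_j$ with multiplicities $n_j$ turns the product into $\prod_{j=1}^{n(\pi)}\tilde\mu(\{Y_j\})^{n_j}$, i.e.\ a factorial-type moment of the jumps of $\tilde N$. Applying the multivariate Mecke/Palm formula, each such moment is produced by inserting a fixed atom of size $s_j$ at location $Y_j$: because of the tilt and the product form $\nu=\rho(ds|x)\alpha(dx)$, the inserted atom contributes a factor $s_j^{\,n_j}e^{-us_j}\rho(ds_j|Y_j)\alpha(dY_j)$, while the remaining Poisson process contributes its Laplace functional $e^{-\psi(u)}$ through \eqref{laplace} and \eqref{e.psi}, where $\psi(u)=\psi_{\mathbb{X}}(u)$. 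Reassembling and using the independence properties of the Poisson random measure, the conditional law of $\tilde\mu$ given $(U_n=u,\mathbf{X})$ splits into two independent pieces: an exponentially tilted CRM $\tilde\mu_{(u)}$ with intensity $e^{-us}\rho(ds|x)\alpha(dx)$, and fixed atoms at each $Y_j$ whose sizes $J_j$ have density proportional to $s^{n_j}e^{-us}\rho(s|Y_j)$, normalized by $\tau_{n_j}(u,Y_j)$ from \eqref{e.tau}. These are precisely items (ii)--(iv). Collecting the remaining $u$-dependent constants $u^{n-1}$, $e^{-\psi(u)}$, and $\prod_j\tau_{n_j}(u,Y_j)$ gives the conditional density \eqref{posterior density of U_n}, which is item (vi).

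Finally I would renormalize. Writing $\tilde\mu=\tilde\mu_{(u)}+\sum_j J_j\delta_{Y_j}$ and dividing by its total mass $T_{(u)}+\sum_j J_j$ with $T_{(u)}=\tilde\mu_{(u)}(\mathbb{X})$ produces the mixture \eqref{posterior distribution}, with weight $\kappa_n=T_{(u)}/(T_{(u)}+\sum_j J_j)$ on the tilted-CRM part and weight $1-\kappa_n$ on the discrete part supported at the observed locations $Y_j$; this is item (v) together with the normalization in \eqref{NRMIs}.

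The main obstacle is the Palm computation in the second step: one must justify the multivariate Mecke formula for the $n_j$-fold products of jumps at coinciding locations, track the partition combinatorics correctly, and verify that the ``inserted atom'' contributions factor cleanly from the leftover Poisson functional so that the tilting of the intensity (giving $\tilde\mu_{(u)}$) and the fixed-jump densities (giving the $J_j$) genuinely separate and remain independent. Everything else --- the gamma identity, reading off the normalizing constants $\tau_{n_j}(u,Y_j)$, and the final normalization --- is routine bookkeeping once this disintegration is established; the argument follows the scheme of \citep{james2009}.
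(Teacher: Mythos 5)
The paper does not prove this theorem itself: it is imported verbatim from \citep{james2009}, so there is no internal proof to compare your attempt against. Your outline reconstructs essentially the same argument as that cited original --- the gamma identity $T^{-n}=\Gamma(n)^{-1}\int_0^\infty u^{n-1}e^{-uT}\,du$ to introduce the latent $U_n$, Poisson partition (Palm/Mecke) calculus to disintegrate $\tilde\mu$ given $(U_n,\mathbf{X})$ into an exponentially tilted CRM plus independent fixed atoms at the distinct values $Y_j$ with densities proportional to $s^{n_j}e^{-us}\rho(s|Y_j)$, and a final normalization --- and it is correct in outline, with the honest caveat (which you state yourself) that the multivariate Palm computation with its partition combinatorics is the real technical content and is left unexecuted in your sketch, just as it is carried out only in the cited reference and not in this paper.
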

 The above theorem  shows that, given the latent variable $U_n$, the posterior of $P$ is a weighted sum of another NRMI $\frac{\tilde{\mu}_{(U_n)}}{T_{(U_n)}}$ and the normalization of Delta measure  $\delta_{Y_j}$  of distinct observations  $Y_j$,  multiplied by  its corresponding jumps $J_j$.   This gives a rather complete description    of the posterior  distribution  of NRMIs.    More details of the posterior analysis of $\tilde{\mu}$ and $P$ can be found  in \citep{james2009}.
\section{Posterior consistency analysis for the NRMIs}\label{sec.posterior consistency}

In this section, we aim at discussing the posterior consistency for NRMIs as pointed out in question (i) in the introduction. Assume that  $\bX=\{X_1,\cdots, X_n\}$ is a sample from the     ``true'' distribution $P_0$ in $\mathbb{M}_{\mathbb{X}}$.  Namely, $\bX=\{X_1,\cdots, X_n\}$ is  iid  $P_0-$ distributed.   Let $Q_n$ denote the probability law of the posterior random probability measure $P|\bX$.   The   posterior distribution  is said to be weakly  consistent if $Q_n$ concentrates on the weak neighbourhood of $P_0$ almost surely.
More precisely, for any weak neighbourhood $O_{\epsilon} \in \mathcal{M}_{\mathbb{X}}$ of $P_0$ with arbitrary radius $\epsilon>0$, 
\begin{align*}
Q_n(O_{\epsilon}) \rightarrow 1 \qquad a.s.-P_0^{n}\,,
\end{align*}
as $n \rightarrow \infty$. 
The limiting probability measure $P_0^{\infty}=\lim_{n\to \infty}P_0^{n}$ is the infinite product measure on $\mathbb{X}^{\infty}$, namely, $P_0^{\infty}=P_0 \times P_0 \cdots$, which makes the random variables $X_1, X_2, \cdots $ independent with common true distribution $P_0$.

Before presenting the main result, we shall give the following lemma, which provides the moments of the posterior $P$.  The lemma plays an important role in the proof of the main theorem. By recalling $\psi_A $ in \eqref{e.psi}, we denote   
\begin{equation} 
V_{\alpha(A)}^{(k)}(y)= (-1)^k e^{\psi_A(y)} \frac{d^k}{dy^k}e^{-\psi_A(y)} \,,  \label{e.v} 
\end{equation} for any $A \in \mathcal{X}$. 
\begin{lemma}\label{them. moment}
Let $\bX=(X_i)_{i=1}^n$ be a random  sample from a normalized random measure  with independent increments $P$.  The moments and the mixed moments of the
posterior moments of $P$ given $\bX$ are given as follows (we use the notation of Theorem \ref{posterior}). 
\begin{enumerate}
\item[(i)]   For any $A \in \mathcal{X}$ and $m\in \mathbb{N}$, the posterior $m$-th moment of $P$ 
is  given by 
\begin{align}
 \mathbb{E}[(\tP(A))^m|\bX)]=&\frac{\Gamma(n)}{\Gamma(m+n)}\sum_{0 \leq l_1+\cdots +l_{n(\pi)} \leq m}^m {m \choose l_1, \cdots, l_{n(\pi)}} \int_{0}^{\infty}  u^{m}f_{U_n|\bX}(u|\bX) \nonumber\\
& \qquad \qquad V_{\alpha(A)}^{(m-(l_1+\cdots +l_{n(\pi)}))}(u)\left(\prod_{j=1}^{n(\pi)}\frac{\tau_{n_j+l_j}(u,Y_j)}{\tau_{n_j}(u,Y_j)}\delta_{Y_j}(A)\right)du\,.\label{moments}
\end{align}
\item[(ii)]  For any    family of pairwise disjoint subsets  $\{A_1, \cdots, A_q\}$   of  $\mathcal{X}$ and any  integers $\{m_1,\cdots, m_q\}$,   we have
\begin{align}
&\E\left[ \tP(A_1)^{m_1} \cdots \tP(A_q)^{m_q}|\bX \right]=\frac{\Gamma(n)}{\Gamma(m+n)}\int_{0}^{\infty}  u^{m}  f_{U_n|\bX}(u|\bX)\nonumber \\
&\prod_{i=1}^{q+1} \left\{\sum_{0 \leq l_1+\cdots +l_{\#(\lambda_i)} \leq m_i}^{m_i} {m_i \choose l_1, \cdots, l_{\#(\lambda_i)}}V_{\alpha(A_i)}^{(m_i-(l_1+\cdots +l_{\#(\lambda_i)}))}(u)\left(\prod_{j\in \lambda_i}\frac{\tau_{n_j+l_j}(u,Y_j)}{\tau_{n_j}(u,Y_j)}\right)\right\}du\,,
\end{align}
where    $m=\sum_{i=1}^q m_i  $,    $A_{q+1}=(\cup_{i=1}^q A_i)^c$, $m_{q+1}=0$,     $\lambda_i=\{j: Y_j \in A_i\}$ is the set of the index of $Y_j$'s that are in $A_i$,  and 
 $\#(\lambda_i)$ is the number of components in $\lambda_i$. 
\end{enumerate}
\end{lemma}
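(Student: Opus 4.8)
The plan is to start from the posterior representation in Theorem~\ref{posterior} and reduce the computation to a de-normalization argument. Collecting the two pieces of \eqref{posterior distribution} over a common denominator, the posterior random measure evaluated at $A$ equals
\begin{equation*}
\tP(A) = \frac{\tilde{\mu}_{(U_n)}(A) + \sum_{j=1}^{n(\pi)} J_j \delta_{Y_j}(A)}{T_{(U_n)} + \sum_{j=1}^{n(\pi)} J_j} =: \frac{N(A)}{D},
\end{equation*}
where $D = N(\mathbb{X})$ is the total posterior mass. I would then apply the gamma identity $D^{-m} = \Gamma(m)^{-1}\int_0^\infty v^{m-1} e^{-vD}\,dv$ to linearize the denominator, so that, conditionally on $U_n = u$,
\begin{equation*}
\E[\tP(A)^m \mid \bX, U_n = u] = \frac{1}{\Gamma(m)} \int_0^\infty v^{m-1}\, \E\big[N(A)^m e^{-vD} \,\big|\, u\big]\,dv.
\end{equation*}

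Next I would expand $N(A)^m$ by the multinomial theorem, the power $l_0$ sitting on $\tilde{\mu}_{(U_n)}(A)$ and the powers $l_1,\dots,l_{n(\pi)}$ on the jumps. Using the independence in part (iv) of Theorem~\ref{posterior} and the complete randomness of $\tilde{\mu}_{(U_n)}$, the expectation factorizes across the CRM part and the jumps. For the CRM factor the key computation is $\E[\tilde{\mu}_{(U_n)}(A)^{l_0} e^{-vT_{(U_n)}} \mid u] = e^{\psi(u)} e^{-\psi(u+v)} V_{\alpha(A)}^{(l_0)}(u+v)$: this follows from the Laplace transform \eqref{laplace} applied to the tilted intensity $e^{-us}\rho(ds|x)\alpha(dx)$, the tilting identity $\psi_A(u+v) - \psi_A(u) = \int_A (1-e^{-vs}) e^{-us}\rho(ds|x)\alpha(dx)$, repeated differentiation in $v$ to bring down the power $l_0$, and the definition \eqref{e.v} of $V_{\alpha(A)}^{(l_0)}$; splitting $T_{(U_n)} = \tilde{\mu}_{(U_n)}(A) + \tilde{\mu}_{(U_n)}(A^c)$ and using $\psi_A + \psi_{A^c} = \psi$ produces the full Laplace exponent. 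For each jump, integrating $(J_j\delta_{Y_j}(A))^{l_j} e^{-vJ_j}$ against the density in part (iii) gives $\delta_{Y_j}(A)^{l_j}\, \tau_{n_j+l_j}(u+v, Y_j)/\tau_{n_j}(u, Y_j)$ directly from the definition \eqref{e.tau}.

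I would then integrate over $U_n$ with the conditional density \eqref{posterior density of U_n}, $f_{U_n|\bX}(u|\bX) = C\, u^{n-1} e^{-\psi(u)} \prod_j \tau_{n_j}(u, Y_j)$. The crucial observation is a cancellation: the factor $e^{\psi(u)}$ produced by the CRM term cancels $e^{-\psi(u)}$ in $f_{U_n|\bX}$, and the denominators $\tau_{n_j}(u, Y_j)$ cancel the product $\prod_j \tau_{n_j}(u, Y_j)$ in $f_{U_n|\bX}$, leaving $C\,u^{n-1}$ times quantities evaluated at $u+v$. Substituting $w = u+v$ and integrating $u$ over $(0,w)$ yields the Beta integral $\int_0^w u^{n-1}(w-u)^{m-1}\,du = \frac{\Gamma(n)\Gamma(m)}{\Gamma(n+m)} w^{n+m-1}$, which supplies the prefactor $\Gamma(n)/\Gamma(m+n)$, cancels the $1/\Gamma(m)$, and collapses all arguments onto the single variable $w$. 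Renaming $w$ back to $u$ and re-expressing $C\,u^{n+m-1} e^{-\psi(u)} \prod_j \tau_{n_j}(u,Y_j)$ as $u^m f_{U_n|\bX}(u|\bX)$ recovers \eqref{moments}; note that the indicator in fact appears as $\delta_{Y_j}(A)^{l_j}$, i.e. only when $l_j \geq 1$, since a jump carries its atom into $A$ only when it is actually raised to a positive power.

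For part (ii) the same machinery applies to $\prod_{i=1}^q \tP(A_i)^{m_i} = D^{-m}\prod_i N(A_i)^{m_i}$ with $m = \sum_i m_i$ and a single gamma variable $v$ for the common denominator $D$. Here the disjointness of $A_1,\dots,A_q$ is what makes things work: by complete randomness the CRM contributions over the disjoint sets, together with $A_{q+1} = (\cup_i A_i)^c$, are mutually independent, and each jump $J_j$ lands in exactly one $A_i$, namely the one indexed by $\lambda_i = \{j: Y_j \in A_i\}$, so the jump expectations group according to this partition. The cancellations against $f_{U_n|\bX}$ and the Beta integral are then identical. The main obstacle I anticipate is the bookkeeping of the double de-normalization: tracking which quantities are evaluated at $u$ versus $u+v$, verifying the cancellation of the $e^{\psi}$ and $\tau_{n_j}$ factors against $f_{U_n|\bX}$, and confirming that the Beta integral is precisely what consolidates the two auxiliary variables into one. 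The analytic justifications (differentiating the Laplace functional under the integral sign, and the Fubini--Tonelli interchange of the $u$- and $v$-integrals with the expectation) are routine, since all integrands are nonnegative and Tonelli applies directly.
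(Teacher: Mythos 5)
Your proposal follows essentially the same route as the paper's own proof: conditioning on $U_n$, linearizing the normalizing denominator with the gamma identity, multinomial expansion, factorizing via independence and Laplace-transform derivatives (which produce $V_{\alpha(A)}^{(\cdot)}$ evaluated at $u+v$ and the ratios $\tau_{n_j+l_j}(u+v,Y_j)/\tau_{n_j}(u,Y_j)$), cancelling the $e^{-\psi(u)}$ and $\tau_{n_j}(u,Y_j)$ factors against $f_{U_n|\bX}$, and collapsing the two auxiliary variables through the substitution $w=u+v$ and the Beta integral $\int_0^w u^{n-1}(w-u)^{m-1}\,du=\frac{\Gamma(n)\Gamma(m)}{\Gamma(n+m)}w^{n+m-1}$. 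Your remark that the atom indicator enters as $\delta_{Y_j}(A)^{l_j}$ (i.e., only when $l_j\geq 1$) is a correct reading of the multinomial expansion and in fact tightens the notation used in \eqref{moments}.
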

The above lemma provides the posterior moments of NRMIs. Such results can be reduced to    the moments of NRMIs by letting the sample size $n=0$.  The proof of \cref{them. moment} is inspired by the idea in \citep{james2006conjugacy} and the details are given in the supplementary materials (Section \ref{supplementary}). To apply the above lemma, one needs to deal with the term 
 $V_{\alpha(A)}^{(k)}(y)$   defined by \eqref{e.v}. 
We give the following recursion formula  for  this quantity: 
\begin{align}
V_{\alpha(A)}^{(k)}(y)= \sum_{i=0}^{k-1}{k-1 \choose i}\xi_{k-i}(y)V_{\alpha(A)}^{(i)}(y)\,,\nonumber
\end{align}
where $\xi_{i}(y)= \int_A \tau_{i}(y,x)\alpha(dx)$. 

To answer question (i)   mentioned in the introduction, we    shall study the weak  consistency for more general NRMIs.  To do so, we  need  the following assumption.  
\begin{assumption}\label{condition A}
 Let   $\tau_k(u,x)$ be  defined by \eqref{e.tau}  and let  $\rho(s|x)$ be a       function such that $u\frac{\tau_{k+1}(u,x)}{\tau_{k}(u,x)} $ is  nondecreasing in $u$ and bounded from above by $k-C_k(x)$ uniformly for all $k\in \mathbb{Z}^+$ and $x\in \mathbb{X}$, where $\{C_k(x)\}$ is a sequence of functions  from $\mathbb{X}$ to  $[0,1)$. Namely, there is an increasing positive function  $\phi(u)$ with $\lim_{u\to \infty} \phi(u)=1$ such that 
 \[
 \sup_{k\in \mathbb{Z}^+\,, x\in \mathbb{X}} \frac{u\frac{\tau_{k+1}(u,x)}{\tau_{k}(u,x)} }{k-C_k(x)}\le \phi(u)\,,\quad \forall \ u\in \RR_+\,.
 \] 
\end{assumption}
\begin{theorem}\label{consistency thm}
Let $P$ be an NRMI with L{\'e}vy intensity   $\nu(ds,dx)=\rho(s|x)ds\alpha(dx)$, where  $\rho(s|x)$ satisfies   Assumption \ref{condition A}. Then
\begin{itemize}
\item[1.] If $P_0$ is continuous, then the posterior of $P$ converges weakly to a point mass at $\bar{C_1}H(\cdot)+(1-\bar{C_1})P_0(\cdot)$  a.s.$-P_0^{\infty}$,  where $\bar{C_1}$ is the population mean of $\{C_1(X_i)\}_{i=1}^{\infty}$,  that is to say $\bar{C_1}=\lim_{n\rightarrow \infty}\frac{\sum_{i=1}^n C_1(X_i)}{n}$. 
\item[2.] If $P_0$ is discrete with $\lim_{n\rightarrow \infty}\frac{n(\pi)}{n}= 0$, then $P$ is weakly consistent, i.e.,
the posterior of $P$ converges weakly to a point mass at $P_0(\cdot)$ a.s.$-P_0^{\infty}$.
\end{itemize}
\end{theorem}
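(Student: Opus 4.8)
The plan is to reduce weak consistency to the convergence of the first two posterior moments of $P(A)$ for sets $A$ ranging over a countable convergence-determining class, and then to control these moments through the explicit formulas of Lemma~\ref{them. moment} together with Assumption~\ref{condition A}. Fix a countable algebra $\mathcal{C}\subset\mathcal{X}$ generating the topology of weak convergence; it then suffices to show that, simultaneously for all $A\in\mathcal{C}$ and a.s.-$P_0^{\infty}$,
\[
\E[P(A)\,|\,\bX]\to M(A),\qquad \Var(P(A)\,|\,\bX)\to 0,
\]
where $M=\bar{C_1}H+(1-\bar{C_1})P_0$ in the continuous case and $M=P_0$ in the discrete case. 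Granting these, Chebyshev's inequality gives $Q_n(|P(A)-M(A)|>\epsilon)\to 0$, and intersecting the countably many a.s.\ events over $\mathcal{C}$ yields $Q_n(O_\epsilon)\to 1$, which is the claimed weak consistency.

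The probabilistic heart of the argument is the analysis of the latent variable $U_n$, whose conditional density is $f_{U_n|\bX}(u)\propto u^{n-1}e^{-\psi(u)}\prod_{j}\tau_{n_j}(u,Y_j)$. Assumption~\ref{condition A}---the monotonicity of $u\,\tau_{k+1}/\tau_k$ and the bound $u\,\tau_{k+1}(u,x)/\tau_k(u,x)\le(k-C_k(x))\phi(u)$ with $\phi\uparrow 1$---is precisely what is needed to show that $U_n$ concentrates at large values, in such a way that $u\,\tau_{n_j+1}(u,Y_j)/\tau_{n_j}(u,Y_j)\to n_j-C_{n_j}(Y_j)$ along the effective support. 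I would make this quantitative by a Laplace/saddle-point estimate: locate the maximizer $u_n^\ast$ of $\log f_{U_n|\bX}$, show that integrals $\int u\,f_{U_n|\bX}(u)\,g(u)\,du$ localize near $u_n^\ast$, and deduce $\phi(U_n)\to 1$ in the sense required for the moment computations.

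With this localization established, I would evaluate the moments directly. Taking $m=1$ in the moment formula, the prefactor is $1/n$, the $V^{(1)}$ term reproduces the continuous contribution $u\,\xi_1(u)$, and the terms with a single $l_j=1$ contribute $\sum_{j:\,Y_j\in A}u\,\tau_{n_j+1}/\tau_{n_j}$. Replacing each ratio by its large-$u$ limit $n_j-C_{n_j}(Y_j)$ and invoking the strong law of large numbers together with Glivenko--Cantelli for the empirical measure $\frac1n\sum_{i}\delta_{X_i}\to P_0$, I would identify the limit of $\E[P(A)\,|\,\bX]$: in the continuous case $n_j\equiv1$, $n(\pi)=n$, the singleton jumps give $\int_A(1-C_1)\,dP_0$ while the continuous part contributes $\bar{C_1}H(A)$, summing to $M(A)$; in the discrete case the hypothesis $n(\pi)/n\to0$ forces both the continuous weight and the aggregate $C_{n_j}$ corrections to vanish, leaving the empirical limit $P_0(A)$. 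For the variance I would use the $m=2$ (equivalently the mixed-moment) formula, whose prefactor $1/[n(n+1)]$ supplies the $n^{-2}$ scaling; the leading $O(n^2)$ contribution reproduces $(\E[P(A)\,|\,\bX])^2$ and cancels, leaving an $O(1/n)$ remainder that tends to $0$.

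The hard part is the second step: pinning down the asymptotics of $U_n$ and rigorously justifying that the moment integrals against $f_{U_n|\bX}$ localize at $u_n^\ast$ with the ratios converging to $n_j-C_{n_j}(Y_j)$. This demands the uniform (in $k$ and $x$) control of $\tau_{k+1}/\tau_k$ supplied by Assumption~\ref{condition A}, a Laplace expansion of a density whose exponent is governed by the random partition $\pi$, and an interchange of the limit $n\to\infty$ with integration (a dominated-convergence argument with a dominating function built from the Assumption). Accounting for the spatial dependence of $C_k(x)$ in the non-homogeneous setting, and confirming that the limit collapses to the stated form $\bar{C_1}H+(1-\bar{C_1})P_0$, is the most delicate bookkeeping.
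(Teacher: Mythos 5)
Your proposal is correct and follows essentially the same route as the paper's own proof: a Laplace/saddle-point analysis of the posterior density of $U_n$ showing that the ratios $u\,\tau_{n_j+1}(u,Y_j)/\tau_{n_j}(u,Y_j)$ integrate to $n_j-C_{n_j}(Y_j)$ in the limit (this is exactly \cref{lemma1}), followed by first- and second-posterior-moment computations from \cref{them. moment} showing the posterior mean converges to the stated limit while the posterior variance is $O(1/n)$, with the $H$-weight pinned down by the total-mass constraint. The only difference is the device for reducing weak consistency to setwise moment convergence --- you use a countable convergence-determining class plus Chebyshev, whereas the paper uses Freedman-style semi-norms $|P_1-P_2|_{\mathcal{A}}$ over countable measurable partitions --- and these are interchangeable standard reductions, not genuinely different arguments.
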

Although the \cref{condition A} looks complicate, it is quite easy to check as long as $\rho(s|x)$ is given. For instance, the intensities $\rho(s|x)$ for almost all popular NRMIs are gamma type, and we shall check \cref{condition A} for these NRMIs in \cref{example: NGGP}, \cref{example: GDP} and \cref{example: extended gamma} to show how the \cref{condition A} works for these processes.  This   allows more applicability of Theorem \ref{consistency thm}.  

As a comparison between Theorem \ref{consistency thm} and the results in \citep{jang2010} for the species sampling priors and \citep{Blasi_2013} for the Gibbs-type priors, Theorem \ref{consistency thm} considers the consistency results for the non-homogeneous NRMIs, which is a more general class of Bayesian nonparametric priors than both the species sampling priors and the Gibbs-type priors. On the other hand, the conditions in \citep{jang2010, Blasi_2013} are not trivial to verify for homogeneous NRMIs, even though the predictive distribution of homogeneous NRMIs is given \citep{pitman2003poisson, james2009}.

In Theorem \ref{consistency thm}, we require $\lim_{n \rightarrow\infty}\frac{n(\pi)}{n} = 0$ as a condition to guarantee the posterior consistency result when $P_0$ is discrete. This condition is true almost surely by the following proposition.

\begin{proposition}\label{remark: ratio}
When $P_0$ is discrete,  $\lim_{n \rightarrow \infty}\frac{n(\pi)}{n} = 0$,almost surely. When $P_0$ is continuous,  $\lim_{n \rightarrow\infty}\frac{n(\pi)}{n} = 1$,almost surely.
\end{proposition}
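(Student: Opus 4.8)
The plan is to handle the continuous and discrete cases separately, reducing each to the strong law of large numbers applied to suitable indicator variables.

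For the continuous case I would first observe that, since $P_0$ is non-atomic, for any $i \neq j$ one has $\mathbb{P}(X_i = X_j) = \int_{\mathbb{X}} P_0(\{x\})\, P_0(dx) = 0$, because $P_0(\{x\}) = 0$ for every $x$. As the infinite sequence $(X_i)_{i\geq 1}$ has only countably many index pairs, a countable union of these null events is again null, so almost surely all the $X_i$ are pairwise distinct. On this probability-one event $n(\pi) = n$ for every $n$, and hence $n(\pi)/n \equiv 1$ converges to $1$.

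For the discrete case, write $P_0 = \sum_{k \geq 1} p_k \delta_{a_k}$ with distinct atoms $a_k$ and $\sum_k p_k = 1$, and set $K_n := n(\pi)$, the number of distinct atoms observed among $X_1, \ldots, X_n$. The main idea is to split the atoms into a finite head and an infinite tail. Fixing $M \in \mathbb{N}$, I would use the deterministic bound
\[
K_n \;\leq\; M + \sum_{i=1}^n \mathbf{1}\{X_i \in \{a_{M+1}, a_{M+2}, \ldots\}\},
\]
which holds because at most $M$ distinct atoms can come from $\{a_1, \ldots, a_M\}$, while every distinct tail atom must be witnessed by at least one sample landing in the tail. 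The strong law of large numbers applied to the iid indicators $\mathbf{1}\{X_i \in \{a_{M+1}, \ldots\}\}$, which have mean $\sum_{k>M} p_k$, then yields, almost surely,
\[
\limsup_{n\to\infty} \frac{K_n}{n} \;\leq\; \sum_{k>M} p_k.
\]

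To conclude, I would intersect, over $M \in \mathbb{N}$, the probability-one events carrying the last display; on this single probability-one event the bound holds for every $M$ simultaneously, whence
\[
\limsup_{n\to\infty} \frac{K_n}{n} \;\leq\; \inf_{M \geq 1} \sum_{k>M} p_k \;=\; 0,
\]
the infimum vanishing because $\sum_k p_k$ converges. Since $K_n/n \geq 0$, this gives $n(\pi)/n \to 0$ almost surely. The only delicate point, and the step I would be most careful about, is the order of limits: the almost-sure $\limsup$ bound must be obtained for each fixed $M$ first, and only then may one let $M \to \infty$ along the countable index set, so that the exceptional null sets aggregate into a single null set rather than being swept into an uncountable supremum. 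The subcase of finitely many atoms is automatically covered (the tail indicator is eventually identically zero), and no concentration inequality is needed, although a bounded-differences estimate on $K_n$ would give an alternative route via $\mathbb{E}[K_n]/n \to 0$.
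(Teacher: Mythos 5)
Your proof is correct and follows essentially the same route as the paper: the discrete case uses the identical head/tail bound $n(\pi)\leq M+\sum_{i=1}^n \mathbf{1}\{X_i \in \{a_{M+1},a_{M+2},\dots\}\}$ (the paper writes this as $n(\pi)\leq k+n\mathbb{P}_n(\{x_{k+1},x_{k+2},\dots\})$) followed by the strong law and letting the cutoff tend to infinity, while the continuous case rests on all observations being almost surely distinct. If anything, your write-up is slightly more careful than the paper's, since you work with $\limsup$, aggregate the exceptional null sets over the countable family of cutoffs explicitly, and actually prove pairwise distinctness in the continuous case rather than asserting $n(\pi)=n$.
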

\begin{proof}
Note that $P_0$ is the true distribution of $\bX$, i.e., $\bX \overset{iid}{\sim} P_0$. Recall that $n(\pi)$ is the number of distinct observations of $\bX$. Let $\mathbb{P}_n(\cdot)=\frac{\sum_{i=1}^n \delta_{X_i}(\cdot)}{n}$ be the empirical probability measure.

 If $P_0$ is discrete, we denote the collection of atoms of $P_0$ is $\mathbb{D}$, then $\mathbb{D}=\{x_1,x_2,\cdots\}$. For any $k\in \mathbb{Z}^+$, we have $n(\pi) \leq k+ n \mathbb{P}_n(\{x_{k+1}, x_{k+2}, \cdots\})$. Thus,
 \begin{align*}
 \lim_{n \rightarrow \infty} \frac{n(\pi)}{n}&=\lim_{k \rightarrow \infty} \lim_{n \rightarrow \infty}\frac{n(\pi)}{n}\\
 &\quad \leq \lim_{k \rightarrow \infty} \lim_{n \rightarrow \infty} \frac{k}{n}+ \mathbb{P}_n(\{x_{k+1}, x_{k+2}, \cdots\})\\
 &\quad =\lim_{k \rightarrow \infty} P_0(\{x_{k+1}, x_{k+2}, \cdots\})=0\,,
 \end{align*}
almost surely, where we use the Borel–Cantelli lemma when taking the limit of $\mathbb{P}_n(\{x_{k+1},\\ x_{k+2}, \cdots\})$ as $n \rightarrow \infty$.

If $P_0$ is continuous, we have $n(\pi)=n\mathbb{P}_n(\mathbb{X})$ and thus $\lim_{n \rightarrow \infty} \frac{n(\pi)}{n}=\lim_{n \rightarrow \infty} \mathbb{P}_n(\mathbb{X})=P_0(\mathbb{X})=1$, almost surely.
\end{proof}

By the identity that $\frac{d}{du}\tau_{k}(u,x)=\frac{d}{du} \int_0^{\infty} s^k e^{-us} \rho_x(s)ds=-\tau_{k+1}(u,x)$, we can have the following assumption that is equivalent to \cref{condition A}.
\begin{assumption}\label{condition B}
 $\rho_x(s)$   is a  function such that $u\frac{d}{du}\ln \left(\tau_{k}(u,x) \right)$ is  nonincreasing in $u$ and bounded from below by $C_k(x)-k$ for all $k\in \mathbb{Z}^+$ and $x\in \mathbb{X}$.
\end{assumption}
\begin{remark}
Theorem \ref{consistency thm} can be extended to more general  NRMIs. For example, \citep{james2002poisson} introduced  the h-biased random measures $\tilde{\mu}$ by $\int_{\mathbb{Y}\times \mathbb{X}}g(s)\tilde{N}(ds,dx)$, where $g:\mathbb{Y} \to \mathbb{R}^+$ is an integrable function on    any complete and separable metric space
$\mathbb{Y}$.
\end{remark}
One interesting quantity to be considered is $n(\pi)$,   the number of distinct observations of the sample $\{X_i\}_{i=1}^n$. In   Bayesian nonparametric mixture models, $n(\pi)$ is   the number of clusters in the sample observations and thus is studied in a number of  works that are   concerning the clustering and so on. Among the literatures let us mention that     the distribution of $n(\pi)$ is  obtained in \citep{Korwar_1973} for the Dirichlet process;  in  \citep{antoniak1974mixtures} 
for the mixture of Dirichlet process; in  \citep{pitman2003poisson} for the two-parameter Poisson-Dirichlet process. For the general NRMIs we have by   a result of \citep{james2009}: 
\begin{proposition}
For any positive integer  $n$, the distribution of $n(\pi)$ is
\begin{align}
\mathbb{P}(n(\pi)=k)=\int_0^{\infty}\frac{nu^n-1}{k!} e^{-\int_{\mathbb{X}}\int_0^{\infty}(1-e^{-ys})\rho(ds|x)\alpha(dx)} \sum_{(n_1,\cdots,n_k)}\prod_{j=1}^k \frac{\int_{\mathbb{X}}\tau_{n_j}(u,x)\alpha(dx)}{n_j!}du,
\end{align}
where $k=1, \cdots, n$,  and  the summation  is over all vectors of positive integers $(n_1, \cdots, n_k)$ such that $\sum_{j=1}^k n_j=n$.
\end{proposition}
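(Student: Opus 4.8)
The plan is to obtain $\mathbb{P}(n(\pi)=k)$ by first writing down the joint law of the random partition $\pi$ together with its distinct values $\mathbf{Y}=(Y_1,\ldots,Y_{n(\pi)})$, then marginalizing out the locations and summing over all partitions with exactly $k$ blocks. The starting point is the Gamma identity $T^{-n}=\frac{1}{\Gamma(n)}\int_0^\infty u^{n-1}e^{-uT}\,du$, which linearizes the normalization $P=\tilde{\mu}/T$ and introduces the latent variable $u$ playing the role of $U_n$ in Theorem \ref{posterior}. Applying this to $\mathbb{E}\big[\prod_{i=1}^n P(dx_i)\big]$ and using the Laplace functional \eqref{laplace} of the Poisson random measure $\tilde{N}$ (equivalently, the disintegration of $\mathbb{E}[e^{-uT}]=e^{-\psi(u)}$ together with the marginal jump contributions $\tau_{n_j}$), one finds that a configuration realizing a partition with block sizes $(n_1,\ldots,n_k)$ and distinct values $y_1,\ldots,y_k$ has joint law
\begin{align}
\frac{1}{\Gamma(n)}\int_0^\infty u^{n-1}e^{-\psi(u)}\prod_{j=1}^k \tau_{n_j}(u,y_j)\,\alpha(dy_j)\,du, \nonumber
\end{align}
where $\psi(u)=\int_{\mathbb{X}}\int_0^\infty(1-e^{-us})\rho(ds|x)\alpha(dx)$. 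This is exactly the normalizing constant implicit in the conditional density \eqref{posterior density of U_n}, so the formula can also be read off directly from Theorem \ref{posterior}(vi).

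Next I would marginalize. Since the integrand factorizes over blocks and is symmetric in the pairs $(n_j,y_j)$, integrating each $y_j$ over $\mathbb{X}$ replaces $\tau_{n_j}(u,y_j)$ by $\int_{\mathbb{X}}\tau_{n_j}(u,x)\alpha(dx)$. It then remains to sum the resulting size-dependent weight over all set partitions of $\{1,\ldots,n\}$ into exactly $k$ blocks. The combinatorial step is to convert the sum over unordered set partitions into a sum over ordered compositions $(n_1,\ldots,n_k)$: a fixed composition is realized by $\frac{n!}{n_1!\cdots n_k!}$ ordered set partitions, while each unordered partition is counted $k!$ times, which produces the factor $\frac{n!}{k!}\prod_{j=1}^k\frac{1}{n_j!}$. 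Combining this with $\frac{n!}{\Gamma(n)}=n$ yields the claimed expression, where the stated $nu^{n}-1$ is read as $n\,u^{n-1}$.

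I expect the only genuinely nontrivial ingredient to be the joint exchangeable-partition formula in the first step, i.e.\ the Poisson partition calculus that produces $e^{-\psi(u)}$ (the ``no other jumps'' vacuum term) multiplied by the product of marginal jump weights $\tau_{n_j}(u,y_j)$. Since this is precisely the content of the cited result of \citep{james2009} and is already embedded in the posterior description of Theorem \ref{posterior}, the proof reduces to the bookkeeping above; the remaining steps are an interchange of integration and summation (justified by nonnegativity and Tonelli) together with the elementary enumeration of set partitions by block-size composition.
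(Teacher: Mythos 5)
Your proposal is correct, but there is nothing in the paper to compare it against: the paper states this proposition as a direct consequence of a result of \citep{james2009} and gives no proof of its own (the supplementary materials prove only the moment lemma, the consistency theorem, and the Bernstein--von Mises theorem). Your argument rests on exactly the same external ingredient that the paper cites, namely the joint marginal law of the partition together with its distinct values,
\[
\frac{1}{\Gamma(n)}\int_0^\infty u^{n-1}e^{-\psi(u)}\prod_{j=1}^k \tau_{n_j}(u,y_j)\,\alpha(dy_j)\,du\,,
\qquad
\psi(u)=\int_{\mathbb{X}}\int_0^{\infty}\bigl(1-e^{-us}\bigr)\rho(ds|x)\alpha(dx)\,,
\]
and the bookkeeping you add on top of it is sound: Tonelli justifies the interchanges, integrating out the $y_j$'s produces $\int_{\mathbb{X}}\tau_{n_j}(u,x)\alpha(dx)$, the passage from unordered set partitions with $k$ blocks to ordered compositions contributes the factor $\frac{n!}{k!}\prod_{j=1}^k\frac{1}{n_j!}$, and $n!/\Gamma(n)=n$ gives the prefactor. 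You also correctly read the statement's typographical slips ($nu^n-1$ for $n\,u^{n-1}$, and $e^{-ys}$ for $e^{-us}$ in the exponent). The one point to repair is your parenthetical claim that the joint law ``can also be read off directly from Theorem \ref{posterior}(vi)'': a conditional density of $U_n$ given $\mathbf{X}$ determines the joint law only up to an arbitrary factor depending on $\mathbf{X}$, so that shortcut is circular. This does not damage the proof, since you ultimately anchor the displayed formula in the cited marginal result of \citep{james2009} --- which is precisely what the paper itself does --- but that remark should be deleted or rephrased.
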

As we mentioned above, the  \cref{condition A} is in fact  quite easy to verify. We provide in the following    examples to see the applicability  of Theorem 
\ref{consistency thm}.
\begin{example}\label{example: NGGP}
The normalized generalized gamma process $\NGGP(a, \sigma, \theta, H)$  \citep{lijoi2003normalized, lijoi_2007} is an NRMI with the following homogeneous L\'evy intensity 
\begin{align}
\nu(ds,dx)=\frac{1}{\Gamma(1-\sigma)}s^{-1-\sigma}e^{-\theta s}ds \alpha(dx)\,,\label{NGGP intensity}
\end{align}
where the parameters $\sigma \in (0,1)$ and $\theta>0$. It is easy to see that the Laplace transform for $\tilde{\mu}(A)$ is 
\begin{align*}
\mathbb{E}\left[e^{-\lambda \tilde{\mu}(A)}\right]=\exp \left\{-\frac{\alpha(A)}{\sigma}[(\lambda+\theta)^{\sigma}-\theta^{\sigma}]\right\}\,.
\end{align*}
When  $\theta \rightarrow 0$, this  NRMI yields the homogeneous $\sigma$-stable NRMI introduced by \citep{kingman1975}.  Letting $\sigma \rightarrow 0$, this  NRMI becomes the Dirichlet process \citep{ferguson1973}.    If we let  $\sigma= \theta=\frac{1}{2}$, this  NRMI becomes the normalized inverse-Gaussian process \citep{lijoi2005b}.

It is easy to 
check  that for any nonnegative integer $k$, 
\begin{align*}
\tau_k(u,x)=\tau_k(u)=\frac{1}{\Gamma(1-\sigma)}\int_0^{\infty} s^{k-\sigma-1}e^{-(u+\theta)s}ds=\frac{\Gamma(k-\sigma)}{\Gamma(1-\sigma)(u+\theta)^{k-\sigma}}\,.
\end{align*}
It is obvious  that  $u\frac{\tau_{k+1}(u,x)}{\tau_k(u,x)}=u\frac{k-\sigma}{u+\theta}$ is increasing in $u$ with the upper bound  $k-\sigma$. Thus, the assumption  \ref{condition A}  is verified and  Theorem \ref{consistency thm}  implies the normalized generalized gamma process is posterior consistent   when $\sigma \rightarrow 0$ (i.e. the Dirichlet process), or when $P_0$ is discrete.
\end{example}
\begin{example}\label{example: GDP}
The generalized Dirichlet process $\GDP (a, \gamma, H)$ \citep{lijoi2005a} is an NRMI with the following homogeneous L\'evy intensity 
\begin{align}
\nu(ds,dx)=\sum_{j=1}^{\gamma}\frac{e^{-js}}{s}ds \alpha(dx)\,,\label{GDP intensity}
\end{align}
where $\gamma$ is a positive integer. The corresponding Laplace transform of $\tilde{\mu}(A)$ is 
\begin{align*}
\mathbb{E}\left[e^{-\lambda \tilde{\mu}(A)}\right]=\left(\frac{(\gamma!)}{(\lambda+1)_{\gamma}}\right)^{\alpha(A)}\,,
\end{align*}
where for $c>0$, $(c)_k=\frac{\Gamma(c+k)}{\Gamma(c)}$ is the ascending factorial of $c$ for any positive integer $k$. When  $\gamma=1$, the  generalized Dirichlet process is reduced  to the Dirichlet process.

It is trivial to obtain  for any nonnegative integer $k$, 
\begin{align*}
\tau_k(u,x)=\tau_k(u)=\sum_{j=1}^{\gamma} \frac{k}{(u+j)^k}\,.
\end{align*}
It follows  $\frac{\tau_{k+1}(u,x)}{\tau_k(u,x)}=k\frac{\sum_{j=1}^{\gamma}(u+j)^{-k-1}}{\sum_{j=1}^{\gamma}(u+j)^{-k}} \in (\frac{k}{u+\gamma}, \frac{k}{u+1})$, which implies $u\frac{\tau_{k+1}(u,x)}{\tau_k(u,x)} =u \frac{k}{u+c(\gamma)}$ with some constant $c(\gamma) \in (1, \gamma)$. Thus, $u\frac{\tau_{k+1}(u,x)}{\tau_k(u,x)}$ is increasing in $u$ with the upper bound   $k$. Theorem 
\ref{consistency thm} can then be used to conclude that  the generalized Dirichlet process is posterior consistent.
\end{example}
\begin{example}\label{example: extended gamma}
As a non-homogeneous example, we consider the extended gamma NRMI whose   non-homogeneous L\'evy intensity
is given by 
\begin{align}
\nu(ds,dx)=\frac{e^{-\beta(x)s}}{s}ds \alpha(dx)\,,\label{extended gamma intensity}
\end{align}
where $\beta(x): \mathbb{X} \rightarrow \mathbb{R}^+$
is an integrable  function
(with respect to $\alpha(dx)$). Such NRMI is constructed by the normalization of the  extended gamma process on $\mathbb{R}$ introduced by \citep{dykstra1981bayesian}. More generally, \citep{lo1982bayesian} studied  the extended Gamma process, called  weighted Gamma process
on abstract spaces.

By a trivial computation, for any nonnegative integer $k$, $\tau_k(u,x)=\frac{\Gamma(k)}{(u+\beta(x))^k}$ and thus $u\frac{\tau_{k+1}(u,x)}{\tau_k(u,x)}=u\frac{k}{u+\beta(x)}$ and  the assumption \ref{condition A} is satisfied.  Theorem \ref{consistency thm} implies that   the  extended gamma NRMI is posterior consistent when $\beta(x)$ is integrable with respect to $\alpha(dx)$.

\end{example}
Our theorem can also be applied to more general NRMIs which haven't been investigated in previous works. For example,  we may naturally  consider the following {\it generalized extended gamma NRMI} by letting the L\'evy intensity be 
as follows:  
$$\nu(ds,dx)=\sum_{i=1}^r \frac{e^{-\beta_i(x)s}}{s}ds \alpha(dx),$$ where $r\in \mathbb{Z}^+$ and $\beta_i(x): \mathbb{X} \rightarrow \mathbb{R}^+$ are integrable  functions (with respect to $\alpha(dx)$).   A similar argument to that of \cref{example: GDP} and \cref{example: extended gamma}   implies  that the generalized extended gamma NRMI is posterior consistent when   $\beta_i(x)$ is integrable  (with respect to $\alpha(dx)$) for all $i\in \{1, \cdots, r\}$.  

Relying on the results in this section, we have answered the question (i) addressed in the introduction. The posterior consistency of NRMIs when $P_0$ is continuous doesn't hold generally, as the posterior distribution of  NRMIs is inconsistent when $\bar{C}_1 \neq 0$ or $H \neq P_0 (\mathbb{P}_n)$. However, it is rare to choose $H$ to be the ``true" distribution $P_0$ and it is not possible to let $H=\mathbb{P}_n$ before a sample is observed. Thus, the assumption $\bar{C}_1=0$ should be made to guarantee the posterior consistency for the NRMIs when $P_0$ is continuous. And, whenever $\rho_x(ds)$ is gamma type, $\bar{C}_1=0$ would reduce the corresponding $P$ to the Dirichlet process or the generalized Dirichlet process.
\section{Bernstein-von Mises theorem for the generalized normalized gamma process}\label{sec. BVM}
The Bernstein-von Mises theorem links Bayesian inference with frequentist inference. Similarly to  the Bernstein-von Mises theorem \citep{vaart_1998} in Bayesian parametric framework, one can derive the Bernstein-von Mises theorem in Bayesian nonparametric framework. There has been some works in the literature.  
One example is the Bernstein-von Mises theorem for the empirical process $\mathbb{P}_n=\frac{\sum_{i=1}^n \delta_{X_i}}{n}$ \citep{van1996, vaart_1998}. With the fact that the maximum likelihood estimator of $P_0$ in the Bayesian nonparametric sense is $\mathbb{P}_n=\frac{\sum_{i=1}^n \delta_{X_i}}{n}$, one can conclude the limit law of $\sqrt{n}(\mathbb{P}_n-P_0)$ is normal distribution.  Based on a similar idea, we would consider the limit law of the posterior distribution of $\sqrt{n}(P-\mathbb{P}_n)$ given an iid sample $\bX$ from $P_0$.  To explain the Bernstein-von Mises theorem in the Bayesian nonparametric case, we temporarily let $P \in \mathbb{M}_{\mathbb{X}}$ be any random probability measure and define the functional as follows:
\begin{align*}
Pf=\int f dP, \qquad P_0f= \int f dP_0, \qquad \mathbb{P}_nf= \int f d\mathbb{P}_n=\frac{\sum_{i=1}^n f(X_i)}{n}\,,
\end{align*}
where $f: \mathbb{X} \rightarrow \mathbb{R}$ is any measurable functions. 

Let $\mathbb{F}$ be the collection of functions $f$, the Bernstein-von Mises theorem in the Bayesian nonparametric case considers the distribution of $\{\sqrt{n}(Pf-\mathbb{P}_nf)| \bX: f \in \mathbb{F}\}$ and $\{\sqrt{n}(\mathbb{P}_nf-P_0f) : f \in \mathbb{F}\}$. It is worth to point out that there have been many works  for the  weak convergence of stochastic processes indexed by elements of    Banach space  of functions, we  refer the statisticians  to
 \citep{van1996, vaart_1998} for further reading.   When the function collection $\mathbb{F}$ is finite, both $\{\sqrt{n}(Pf-\mathbb{P}_nf)| \bX: f \in \mathbb{F}\}$ and $\{\sqrt{n}(\mathbb{P}_nf-P_0f) : f \in \mathbb{F}\}$ are random vectors in Euclidean space. Otherwise, it is convenient to consider the $\mathbb{F}$ to be $P_0-$Donsker.   Here we recall that $\mathbb{F}$ is $P_0-$Donsker if the sequence $\sqrt{n}(\mathbb{P}_nf-P_0f)$ converges to $\mathbb{B}_{P_0}^o$ in distribution in the metric space $l^{\infty}(\mathbb{F})$ of bounded functions $g: \mathbb{F} \rightarrow \mathbb{R}$, equipped with the uniform norm $||g||_{\mathbb{F}}= \sup_{f \in \mathbb{F}} |g(f)|$. And $\mathbb{B}_{P_0}^o$ is a Brownian bridge with parameter $P_0$ or $P_0-$ Brownian bridge, so that $\E[\mathbb{B}_{P_0}^of]=0$ and $\E[\mathbb{B}_{P_0}^of_1\mathbb{B}_{P_0}^of_2]=P_0(f_1f_2)-P_0f_1P_0f_2$. An notable result is that a finite set $\mathbb{F}$ is $P_0-$Donsker if and only if $P_0f^2<\infty$ for every $f \in \mathbb{F}$. For the infinite $P_0-$Donsker classes, one can find details and examples in \citep{van1996}.

In order to define the weak convergence of $\sqrt{n}(P-\mathbb{P}_n)$ conditional on $\bX$ to $\mathbb{B}_{P_0}^o$, we  can  use the conditional weak convergence in the bounded Lipschitz metric \citep{van1996} as follows: 
\begin{align}
\sup_{h \in \text{BL}_1}\left| \E \lk h(\sqrt{n}(P-\mathbb{P}_n)|\bX \rk-\E[h(\mathbb{B}_{P_0}^o)]\right| \rightarrow 0\,,\label{eq1. BL metric}
\end{align}
 as $n \rightarrow \infty$. The expectation in \eqref{eq1. BL metric} is taken for the random probability measure $P$, and thus the left side of \eqref{eq1. BL metric} is a function of $\bX$. The convergence in \eqref{eq1. BL metric} refers to the iid sample $\bX$ from $P_0$ and can be in probability or almost surely. The supreme is taken over the set BL$_1$ of all functions $h : l^{\infty}(\mathbb{F}) \rightarrow [0,1]$ such that $|h(f_1)-h(f_2)|\leq ||f_1-f_2||_{\mathbb{F}}$, for all $f_1,f_2 \in l^{\infty}(\mathbb{F})$. We denote the above convergence as 
 \begin{align*}
 \sqrt{n}(P-\mathbb{P}_n)|\bX  \leadsto \mathbb{B}_{P_0}^o\,.
 \end{align*}
 
Under the convergence criteria we explained above, we will present the Bernstein-von Mises theorem  when $P \sim \NGGP (a, \sigma, \theta, H)$. For simplicity of interpretation, let $\tilde{\mathbb{P}}_n=\frac{\sum_{i=1}^{n(\pi)} \delta_{Y_i}}{n(\pi)}$.

\begin{theorem}\label{thm: BVM}
Let $\bX$ be a sample as defined in \cref{Bayesian nonparametric} with $P \sim \NGGP(a, \sigma, \theta, H)$. Let $\mathbb{F}$ be the finite collection of functions such that $P_0f^2 <\infty$ and $Hf^2< \infty$ for any $f \in \mathbb{F}$. We have the following convergences almost surely under $P_0^{\infty}$.
\begin{itemize}
\item[] (i) If $P_0$ is discrete, 
\begin{align}
& \sqrt{n}\left(P- \lk \mathbb{P}_n+\frac{\sigma n(\pi)}{n}(H-\tilde{\mathbb{P}}_n) \rk \right)|\bX  \leadsto \mathbb{B}_{P_0}^o\,,\label{eq1: BVM dicrete}\\
& \sqrt{n}\left(P-\mathbb{E}[P|\bX]\right)|\bX  \leadsto \mathbb{B}_{P_0}^o\,.\label{eq2: BVM dicrete}
\end{align}
\item[] (ii) If $P_0$ is continuous,
\begin{align}
& \sqrt{n}\left(P- \lk (1-\sigma)\mathbb{P}_n+\sigma H \rk \right)|\bX \nonumber\\
&\qquad\qquad\qquad \leadsto \sqrt{1-\sigma}\mathbb{B}_{P_0}^o+\sqrt{\sigma(1-\sigma)}\mathbb{B}_{H}^o+\sqrt{\sigma}Z(P_0-H)\,,\label{eq1: BVM continuous}\\
& \sqrt{n}(P-\mathbb{E}[P|\bX])|\bX  \nonumber\\
&\qquad\qquad\qquad \leadsto \sqrt{1-\sigma}\mathbb{B}_{P_0}^o+\sqrt{\sigma(1-\sigma)}\mathbb{B}_{H}^o+\sqrt{\sigma}Z(P_0-H)\,.\label{eq2: BVM continuous}
\end{align}
\end{itemize}
Here $\mathbb{B}_{P_0}^o$, $\mathbb{B}_{H}^o$ are independent Brownian bridges, independent of the standard normal random variable $Z$. Moreover, if $\mathbb{F}$ is any $P_0-$Donsker class of functions, then the convergences hold in probability in $l^{\infty}(\mathbb{F})$. In this case, the convergences is also $P_0^{\infty}-$almost surely under an additional condition that $P_0||f-P_0f||_{\mathbb{F}}^2 < \infty$.
\end{theorem}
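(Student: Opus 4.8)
The plan is to establish the stated conditional weak convergences by the method of moments, for which \cref{them. moment} is tailor-made. Since $\mathbb{F}$ is finite, it suffices to prove convergence of the finite-dimensional distributions of the conditional law of $\{\sqrt{n}(Pf-\E[Pf\mid\bX]):f\in\mathbb{F}\}$, because the limiting processes are Gaussian and hence determined by their first two moments (higher moments being fixed by Wick's formula). I would therefore compute, for arbitrary $f_1,\dots,f_r\in\mathbb{F}$ and exponents $m_1,\dots,m_r$, the posterior mixed moments $\E[\prod_i(Pf_i)^{m_i}\mid\bX]$ using \cref{them. moment}(ii), first extending that formula from indicators $P(A)$ to general $f$ by approximating $f$ with simple functions adapted to a refining sequence of partitions (in the limit $\delta_{Y_j}(A)$ becomes $f(Y_j)$ and the $V^{(k)}_{\alpha(A)}$-terms become functionals of $\int f^k\,d\alpha$). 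I would center at the posterior mean to kill the first moment, and then treat the explicit centerings $\mathbb{P}_n+\tfrac{\sigma n(\pi)}{n}(H-\tilde{\mathbb{P}}_n)$ and $(1-\sigma)\mathbb{P}_n+\sigma H$ by showing that $\sqrt{n}$ times their difference from $\E[P\mid\bX]$ tends to $0$ almost surely.

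The conceptual skeleton that organizes the computation comes from \cref{posterior}: write $Pf=\kappa_n R_nf+(1-\kappa_n)D_nf$, where $R_n=\tilde{\mu}_{(U_n)}/T_{(U_n)}$ is the residual normalized gamma measure (with base $H$) and $D_nf=\sum_j \tfrac{J_j}{\sum_j J_j}f(Y_j)$ is a Dirichlet reweighting of the distinct data, the weights being $\mathrm{Dir}(n_1-\sigma,\dots,n_{n(\pi)}-\sigma)$ given $U_n$. Specializing \eqref{e.tau} to the NGGP gives $\tau_k(u)=\Gamma(k-\sigma)/[\Gamma(1-\sigma)(u+\theta)^{k-\sigma}]$, so the conditional density \eqref{posterior density of U_n} reduces to $f_{U_n\mid\bX}(u)\propto u^{n-1}(u+\theta)^{-(n-\sigma n(\pi))}e^{-\frac{a}{\sigma}(u+\theta)^\sigma}$; a Laplace analysis shows that $U_n$ concentrates around the level $u_n^\ast$ solving $\sigma n(\pi)\approx a(u_n^\ast)^\sigma$, i.e. of order $n(\pi)^{1/\sigma}$. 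Feeding $u_n^\ast$ into $\kappa_n=T_{(U_n)}/(T_{(U_n)}+\sum_j J_j)$ and using \cref{remark: ratio} gives $\kappa_n\to 0$ when $P_0$ is discrete and $\kappa_n\to\sigma$ when $P_0$ is continuous, which already pins down the two centerings and explains the bias $\tfrac{\sigma n(\pi)}{n}(H-\tilde{\mathbb{P}}_n)$ as the leading correction to $\E[P\mid\bX]$.

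With this in hand the three limit terms appear from the decomposition $\sqrt{n}(Pf-c_nf)=\kappa_n\sqrt{n}(R_nf-Hf)+(1-\kappa_n)\sqrt{n}(D_nf-\mathbb{P}_nf)+\sqrt{n}(\kappa_n-\sigma)(Hf-\mathbb{P}_nf)$, where $c_n$ is the centering of the relevant case: the reweighting term contributes $\sqrt{1-\sigma}\,\mathbb{B}^o_{P_0}$ (an exchangeable-weights CLT for $\sum_j w_j f(Y_j)$ combined with $Y_j\sim P_0$), the residual measure contributes $\sqrt{\sigma(1-\sigma)}\,\mathbb{B}^o_H$ (its fluctuations are governed by the base $H$ and the scale $U_n$), and the scalar fluctuation $\sqrt{n}(\kappa_n-\sigma)\to\sqrt{\sigma}\,Z$ produces the rank-one term $\sqrt{\sigma}\,Z(P_0-H)$ in the direction of the mean gap; in the discrete case $\kappa_n\to 0$ collapses the last two terms and leaves only $\mathbb{B}^o_{P_0}$. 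I would verify all of this at the level of moments: show that the centered first moment vanishes, that the second moments converge to $(1-\sigma)[P_0f^2-(P_0f)^2]+\sigma(1-\sigma)[Hf^2-(Hf)^2]+\sigma(P_0f-Hf)^2$ (continuous) or to $P_0f^2-(P_0f)^2$ (discrete), that the cross terms reproduce the Brownian-bridge covariance $P_0(f_1f_2)-P_0f_1\,P_0f_2$ together with the $H$- and $Z$-contributions, and that mixed higher moments factor by Wick — equivalently, that the three sources are asymptotically independent because their cross-moments are of lower order in $n$.

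The main obstacle is the analytic control of $U_n$: the moment formula weights everything by $f_{U_n\mid\bX}$, so I need sharp Laplace asymptotics — not merely concentration of $U_n$ but the precise Gaussian fluctuation of $U_n$, and hence of $\kappa_n$, around $u_n^\ast$ — and these must be combined with the random partition data $\{n_j\}$ and the block count $n(\pi)$ entering $\prod_j \tau_{n_j+l_j}(u)/\tau_{n_j}(u)$, uniformly enough to pass from convergence in $P_0^{\infty}$-probability to almost-sure convergence; the coupling of the fluctuation of $U_n$ with the Dirichlet weights is exactly what creates the extra $\sqrt{\sigma}\,Z(P_0-H)$ term in the continuous case and is the most delicate bookkeeping. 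A second, separate obstacle is the promotion from finite $\mathbb{F}$ to a $P_0$-Donsker class: there one must establish asymptotic tightness of the conditional law of $\sqrt{n}(P-\E[P\mid\bX])$ in $\ell^\infty(\mathbb{F})$ by bounding the conditional modulus of continuity of both $R_n$ and $D_n$ — the $D_n$ piece via the Donsker property of the empirical process and an exchangeable-weights (bootstrap-type) maximal inequality, the $R_n$ piece via $Hf^2<\infty$ and the Donsker property relative to $H$ — with the extra condition $P_0\|f-P_0f\|_{\mathbb{F}}^2<\infty$ used to upgrade convergence in probability to almost surely.
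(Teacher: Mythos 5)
Your structural skeleton is exactly the paper's: the posterior decomposition $P=\kappa_n P_{U_n}+(1-\kappa_n)\sum_j D_{n,j}\delta_{Y_j}$ from \cref{posterior} (specialized to the NGGP as in \cref{lemma: NGGP posterior}), the Laplace analysis of $f_{U_n|\bX}$ showing $U_n$ is of order $n(\pi)^{1/\sigma}$, the limits $\kappa_n\to 0$ (discrete) and $\kappa_n\to\sigma$ (continuous) pinning down the centerings, and the three-term decomposition producing $\sqrt{1-\sigma}\,\mathbb{B}^o_{P_0}$, $\sqrt{\sigma(1-\sigma)}\,\mathbb{B}^o_H$ and the rank-one $Z$-term are all what the paper does. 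The genuine gap is in your verification engine. The theorem assumes only $P_0f^2<\infty$ and $Hf^2<\infty$, and under these hypotheses the higher-order posterior moments you propose to compute need not exist, so no method-of-moments or Wick-factorization argument can be run. Concretely, write $P_{U_n}=\sum_i w_i\delta_{Z_i}$ with atoms $Z_i\overset{iid}{\sim}H$ independent of the stick-breaking weights $\{w_i\}$; for $f\ge 0$ one has $(P_{U_n}f)^3\ge\sum_i w_i^3 f(Z_i)^3$, hence $\E[(P_{U_n}f)^3\mid\bX]\ge \E\big[\sum_i w_i^3\big]\,Hf^3=\infty$ whenever $Hf^3=\infty$, and such $f$ (square-integrable but not cube-integrable under $H$) satisfy the theorem's hypotheses. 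The same problem invalidates your simple-function approximation step for extending \cref{them. moment} to general $f$, since passing to the limit there would require uniform integrability that only higher moments could supply.

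This is precisely why the paper does not use \cref{them. moment} as the engine of the Bernstein--von Mises proof: it invokes moment computations only for quantities where they are harmless, namely the first moments entering the centering (\cref{lemma: equivalent}) and the first two moments of the bounded scalar $\kappa_n$ (\cref{lemma: convergence of kappa}). Each stochastic component is instead shown to converge weakly by limit theorems that need only second moments: the stick-breaking functional CLT of \citep{huzhang2022functional} for $\sqrt{\sigma n(\pi)}(P_{U_n}-H)$ (\cref{lemma: CLT of NGGP}); the gamma representation of the Dirichlet weights combined with the Lindeberg--Feller and multiplier CLTs of \citep{van1996} for the weighted empirical part in the discrete case; and the exchangeable-bootstrap CLT of \citep{praestgaard1993exchangeably} in the continuous case. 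To repair your proposal you would either have to strengthen the hypotheses to $H|f|^m<\infty$ and $P_0|f|^m<\infty$ for every $m$ (strictly narrower than the theorem as stated), or replace the moment computations by these componentwise weak-convergence arguments --- at which point you have reproduced the paper's proof.
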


We refer to Theorem	2.11.1 and 2.11.9 in \citep{van1996} for more details of the discussion for $\mathbb{F}$ such that the convergence holds in $l^{\infty}(\mathbb{F})$. 

When $P_0$ is continuous, there is a ``bias" term $\sigma(H-\mathbb{P}_n)$ in the convergence in \eqref{eq1: BVM continuous}. And the term vanishes only when $\sigma=0$, under which $P$ becomes the Dirichlet process, or when $H=\mathbb{P}_n$ ($H= P_0$), which is unrealistic. Moreover, the $\sigma$ equals the $\bar{C}_1$ in Theorem \ref{consistency thm}. Thus, it suggests that one is not expected to use NGGP for continuous $P_0$.

On the other hand, it is interesting to see that there is a ``bias" term $\frac{\sigma n(\pi)}{n}(H-\tilde{\mathbb{P}}_n)$ on the left hand side of  the convergence in \eqref{eq1: BVM dicrete} when $P_0$ is discrete to make the limiting process is $\mathbb{B}_{P_0}^o$. We can not drop this ``bias" term directly, although $\lim_{n\rightarrow \infty} \frac{n(\pi)}{n} =0$ a.s.. The term can be dropped as long as $\lim_{n\rightarrow \infty} \frac{n(\pi)}{\sqrt{n}} =0$, in the sense that the number of  atoms $\{x_j\}$ in $P_0$ should decrease fast enough when $n \rightarrow \infty$. For a formal condition of $P_0$ to make $\lim_{n\rightarrow \infty} \frac{n(\pi)}{\sqrt{n}} =0$, we have the following corollary.
\begin{corollary}\label{cor: discrete true BVM}
Under the conditions in Theorem \ref{thm: BVM}, when $P_0$ is discrete, we have the following results.
\begin{itemize}
\item[(i)] If $P_0(\{x_j\})\leq \frac{C}{j^{\alpha}}$, for some positive constant $C$ and $\alpha>2$ and $\mathbb{F}$ is the class of uniformly bounded functions, then $\sqrt{n}(P_{U_n}-\mathbb{P}_n)|\bX \leadsto \mathbb{B}_{P_0}^o$ in probability in $l^{\infty}(\mathbb{F})$.
\item[(ii)] If the function $h(t):=\# \{x: P_0(\{x\})\geq \frac{1}{t}\}$ is regularly varying at $\infty$ of exponent $\eta$ with $\eta <\frac{1}{2}$ and $\mathbb{F}$ is the class of uniformly bounded functions, then $\sqrt{n}(P_{U_n}-\mathbb{P}_n)|\bX \leadsto \mathbb{B}_{P_0}^o$ a.s. in $l^{\infty}(\mathbb{F})$.
\item[(iii)] If $\mathbb{F}$ is a class of functions $f$ such that $f(\{x_j\})\asymp j^\beta
$ for some $\beta>0$ and $P_0(\{x_j\})\leq \frac{C}{j^{\alpha}}$, for some positive constant $C$ and $\alpha>2+2\beta$, then $\sqrt{n}(P_{U_n}-\mathbb{P}_n)|\bX \leadsto \mathbb{B}_{P_0}^o$ in probability in $l^{\infty}(\mathbb{F})$.
\end{itemize}
\end{corollary}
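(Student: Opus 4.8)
The plan is to obtain the corollary from Theorem \ref{thm: BVM}(i) by a Slutsky-type argument: under each hypothesis the rescaled bias term must be shown to vanish in the appropriate mode, after which the limit is unchanged. Conditioning on the data, write
\begin{align*}
\sqrt{n}\big(P_{U_n}-\mathbb{P}_n\big) = \sqrt{n}\Big(P_{U_n}-\big(\mathbb{P}_n+\tfrac{\sigma n(\pi)}{n}(H-\tilde{\mathbb{P}}_n)\big)\Big) + \frac{\sigma n(\pi)}{\sqrt{n}}\big(H-\tilde{\mathbb{P}}_n\big),
\end{align*}
where the first summand converges to $\mathbb{B}_{P_0}^o$ by Theorem \ref{thm: BVM}(i). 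The remainder $R_n:=\frac{\sigma n(\pi)}{\sqrt{n}}(H-\tilde{\mathbb{P}}_n)$ is $\bX$-measurable, i.e. a deterministic translation in the conditional law, so $\sqrt{n}(P_{U_n}-\mathbb{P}_n)|\bX \leadsto \mathbb{B}_{P_0}^o$ in $l^{\infty}(\mathbb{F})$ as soon as $\|R_n\|_{\mathbb{F}}=\sup_{f\in\mathbb{F}}|R_nf|\to 0$ (in probability for (i) and (iii), almost surely for (ii)). Using $\tilde{\mathbb{P}}_nf=\frac{1}{n(\pi)}\sum_{j=1}^{n(\pi)}f(Y_j)$, the offending quantity splits as $R_nf=\frac{\sigma n(\pi)}{\sqrt n}Hf-\frac{\sigma}{\sqrt n}\sum_{j=1}^{n(\pi)}f(Y_j)$, so everything reduces to controlling $n(\pi)/\sqrt n$ and the weighted partial sums $\sum_{j=1}^{n(\pi)}f(Y_j)$.

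For the moment estimate I would use the occupancy identity $\E[n(\pi)]=\sum_{j\ge 1}\big(1-(1-P_0(\{x_j\}))^n\big)$ together with $1-(1-p)^n\le\min(1,np)$. Splitting the series at the index $j\approx(Cn)^{1/\alpha}$ where $nP_0(\{x_j\})\approx 1$ and inserting $P_0(\{x_j\})\le Cj^{-\alpha}$ gives $\E[n(\pi)]\lesssim n^{1/\alpha}$, hence $\E[n(\pi)]/\sqrt n\lesssim n^{1/\alpha-1/2}\to 0$ exactly when $\alpha>2$. Under the uniform bound of part (i) one has $\|R_n\|_{\mathbb{F}}\lesssim n(\pi)/\sqrt n$, and Markov's inequality upgrades the expectation bound to convergence in probability, proving (i). Part (iii) is the weighted analogue: from $|\sum_{j=1}^{n(\pi)}f(Y_j)|\lesssim\sum_{k:\,x_k\text{ observed}}k^{\beta}$ and the same identity, $\E\big[\sum_{k}k^{\beta}\mathbf{1}\{x_k\text{ observed}\}\big]=\sum_{k\ge1}k^{\beta}\big(1-(1-P_0(\{x_k\}))^n\big)\lesssim n^{(\beta+1)/\alpha}$, so the weighted term over $\sqrt n$ is $\lesssim n^{(\beta+1)/\alpha-1/2}\to 0$ precisely when $\alpha>2+2\beta$; the $\frac{\sigma n(\pi)}{\sqrt n}Hf$ piece is dominated by the same $n(\pi)/\sqrt n\to0$ bound (provided $\sup_{f\in\mathbb{F}}|Hf|<\infty$), and Markov again yields convergence in probability.

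For the almost-sure statement (ii) the expectation bound is insufficient, and here I would invoke the classical strong law for the number of occupied boxes in an infinite urn scheme (Karlin's theorem): when $h(t)=\#\{x:P_0(\{x\})\ge 1/t\}$ is regularly varying at $\infty$ with exponent $\eta\in(0,1)$, one has $n(\pi)/\E[n(\pi)]\to 1$ almost surely with $\E[n(\pi)]\sim\Gamma(1-\eta)\,h(n)$. Writing $h(n)=n^{\eta}L(n)$ for a slowly varying $L$ gives $n(\pi)/\sqrt n\sim\Gamma(1-\eta)\,n^{\eta-1/2}L(n)\to 0$ almost surely whenever $\eta<1/2$; combined with the uniform boundedness of $\mathbb{F}$ this forces $\|R_n\|_{\mathbb{F}}\to 0$ a.s., and the Slutsky step of the first paragraph concludes.

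The main obstacle is the a.s. control in (ii): the expectation estimates that drive (i) and (iii) give only convergence in probability, so the strong law for $n(\pi)$ must come from a concentration (variance/martingale) argument, which is exactly the content of Karlin's theorem under regular variation — the step on which I would rely on an external result rather than reprove. A secondary technical point is uniformity over $\mathbb{F}$ in the $l^{\infty}(\mathbb{F})$ norm: in (iii) one must ensure the growth bound $|f(x_j)|\lesssim j^{\beta}$ and the finiteness of $\sup_{f}|Hf|$ hold uniformly across the class, so that the supremum over $f$ can be replaced by the single weighted sum estimated above.
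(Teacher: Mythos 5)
Your Slutsky decomposition and the estimates killing the bias term are correct: the occupancy identity $\E[n(\pi)]=\sum_{j}\bigl(1-(1-P_0(\{x_j\}))^n\bigr)$ with the bound $1-(1-p)^n\le\min(1,np)$ does give $\E[n(\pi)]\lesssim n^{1/\alpha}$ under (i), the weighted version gives $\lesssim n^{(\beta+1)/\alpha}$ under (iii) (so the thresholds $\alpha>2$ and $\alpha>2+2\beta$ come out exactly right), and Karlin's strong law handles (ii). This is also genuinely more self-contained than the paper, whose entire proof is a citation: it invokes Corollary 2 of \citep{franssen2022bernstein} and merely recalls the regular-variation facts ($n(\pi)/h(n)\to\Gamma(1-\eta)$ a.s., $h(n)=n^{\eta}$ up to slowly varying factors).

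There is, however, a genuine gap in the first step. You justify the convergence of the bias-corrected summand $\sqrt{n}\bigl(P-[\mathbb{P}_n+\tfrac{\sigma n(\pi)}{n}(H-\tilde{\mathbb{P}}_n)]\bigr)\big|\bX\leadsto\mathbb{B}_{P_0}^o$ in $l^{\infty}(\mathbb{F})$ by citing Theorem \ref{thm: BVM}(i), but the $l^{\infty}(\mathbb{F})$ statement of that theorem holds only when $\mathbb{F}$ is a $P_0$-Donsker class (the almost-sure version additionally needs $P_0\|f-P_0f\|_{\mathbb{F}}^2<\infty$, which is at least automatic for uniformly bounded classes). In parts (i) and (ii) the corollary takes $\mathbb{F}$ to be the class of \emph{all} uniformly bounded functions, which is not Donsker for a general discrete $P_0$: by the Borisov--Durst criterion it is $P_0$-Donsker if and only if $\sum_j\sqrt{P_0(\{x_j\})}<\infty$. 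So the decay hypotheses play a second role your proof never uses: $\alpha>2$ gives $\sum_j j^{-\alpha/2}<\infty$, and regular variation with exponent $\eta<\tfrac12$ gives sorted atoms of order $j^{-1/\eta}$ with $1/(2\eta)>1$, and these verifications are needed before Theorem \ref{thm: BVM}(i) can be applied at all; part (iii) likewise needs a weighted Donsker argument for the unbounded class $f(x_j)\asymp j^{\beta}$, not just the envelope/uniformity remark you make about the bias term. This Donsker verification is precisely the other half of the cited Corollary 2 of \citep{franssen2022bernstein}; adding it (via Borisov--Durst and its weighted analogue) would complete your otherwise correct argument.
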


The proof of the above Corollary follows directly from the Corollary 2 in \citep{franssen2022bernstein}. And we recall that if $h$ is regularly varying at $\infty$ with exponent $\eta \in (0,1)$, then for any $t>0$, we have $\lim_{n \rightarrow \infty} \frac{h(nt)}{h(n)} =t^{\eta}$. Moreover, for such regularly varying function $h$, we have  $\frac{n(\pi)}{h(n)} \rightarrow \Gamma(1-\eta)$ a.s., and $h(n)$ is $n^{\eta}$ up to a slowly varying factor. We refer the appendix in \citep{haan2006extreme} and \citep{bingham_goldie_teugels_1987} for more details of the regularly varying function.

As the application of the Bernstein-von Mises results in \cref{thm: BVM}, we may construct Bayesian credible sets for $Pf$ when $n \rightarrow \infty$. The choices of $f$ determine the parameters, for which the credible sets are constructed. For example, if $f(x)=x$, the credible interval is for the mean.  Since the posterior consistency does not hold for the case when $P_0$ is continuous, the credible sets for $Pf$ is not correct in this case, thus we shall only give the credible sets for $Pf$ when $P_0$ is discrete.
\begin{corollary}\label{cor: confidence interval}
If $P_0$ is discrete, under the conditions in Theorem \ref{thm: BVM}, we have the probability of $P_0f  \in \lc L_{n, \alpha}f- \frac{\sigma n(\pi)}{n}(Hf-\tilde{\mathbb{P}}_nf),  L_{n,\beta}f -\frac{\sigma n(\pi)}{n}(Hf-\tilde{\mathbb{P}}_nf)\rc$ is $\beta-\alpha$ for any $f$ such that $P_0f^2<\infty$ and $Hf^2<\infty$. Here $L_{n,\alpha}$ is the $\alpha-$quantile of the posterior distribution of $Pf|\bX$ and $\beta>\alpha$.
\end{corollary}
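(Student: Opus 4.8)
The plan is to reduce the functional statement to a one-dimensional one and then play the posterior Bernstein--von Mises limit against the classical central limit theorem for the empirical measure, exploiting that both have the \emph{same} Gaussian limit. Write $b_n := \frac{\sigma n(\pi)}{n}(Hf - \tilde{\mathbb{P}}_n f)$ for the bias term and $\tau^2 := P_0 f^2 - (P_0 f)^2$, which is finite by the hypothesis $P_0 f^2 < \infty$. First I would apply Theorem~\ref{thm: BVM}(i), equation~\eqref{eq1: BVM dicrete}, to the singleton class $\mathbb{F} = \{f\}$. Evaluating the $P_0$-Brownian bridge at $f$ gives $\mathbb{B}_{P_0}^o f \sim N(0,\tau^2)$ by the stated covariance structure, so that, conditionally on $\bX$ and for $P_0^{\infty}$-almost every realization,
\[
\sqrt{n}\bigl(Pf - \mathbb{P}_n f - b_n\bigr)\,\big|\,\bX \;\leadsto\; N(0,\tau^2).
\]

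Next I would translate this into a statement about the posterior quantiles $L_{n,\alpha} f$. Since the limit $N(0,\tau^2)$ has a continuous and strictly increasing distribution function (the degenerate case $\tau^2=0$ being trivial), conditional convergence in distribution forces convergence of the posterior $\alpha$-quantile of $Pf$ to the corresponding quantile of the limit. Because $\mathbb{P}_n f + b_n$ is $\bX$-measurable, this yields, $P_0^{\infty}$-a.s.,
\[
\sqrt{n}\bigl(L_{n,\alpha} f - \mathbb{P}_n f - b_n\bigr) = q_\alpha + r_{n,\alpha}, \qquad r_{n,\alpha} \to 0,
\]
with $q_\alpha := \tau\,\Phi^{-1}(\alpha)$, and the analogous identity for $\beta$. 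Note the subtraction of $b_n$ in the interval is exactly what recenters the posterior quantiles, which sit around $\mathbb{P}_n f + b_n$, back onto $\mathbb{P}_n f$.

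The final step combines the two limits over the data randomness. Substituting the quantile expansion, the event $P_0 f \in \bigl(L_{n,\alpha}f - b_n,\, L_{n,\beta}f - b_n\bigr)$ becomes
\[
\bigl\{\, -q_\beta - r_{n,\beta} < \sqrt{n}(\mathbb{P}_n f - P_0 f) < -q_\alpha - r_{n,\alpha} \,\bigr\}.
\]
The classical central limit theorem — equivalently the $P_0$-Donsker property of the finite class $\{f\}$ under $P_0 f^2 < \infty$ — gives $\sqrt{n}(\mathbb{P}_n f - P_0 f) \leadsto N(0,\tau^2)$, the very same limit law. Since $r_{n,\alpha}, r_{n,\beta} \to 0$, a Slutsky argument with continuity of the limiting distribution function shows the probability converges to $\mathbb{P}\bigl(-q_\beta < N(0,\tau^2) < -q_\alpha\bigr)$, and by symmetry of the centred Gaussian this equals $\Phi(q_\beta/\tau) - \Phi(q_\alpha/\tau) = \beta - \alpha$, as claimed.

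The hard part will be the clean handling of the two distinct sources of randomness. The posterior quantile $L_{n,\alpha} f$ is a random centering depending on $\bX$, whereas the coverage probability is taken over the frequentist law $P_0^{\infty}$ of $\bX$. The delicate point is that the quantile convergence delivered by the conditional Bernstein--von Mises theorem holds only for $P_0^{\infty}$-almost every $\bX$, so the remainders $r_{n,\alpha}$ are $\bX$-measurable and vanish almost surely; one must verify that these a.s.-vanishing random shifts can indeed be absorbed by Slutsky's theorem against the CLT for $\sqrt{n}(\mathbb{P}_n f - P_0 f)$ without disturbing the limiting coverage, which requires checking joint (rather than merely marginal) convergence of $\bigl(\sqrt{n}(\mathbb{P}_n f - P_0 f),\, r_{n,\alpha},\, r_{n,\beta}\bigr)$ to $\bigl(N(0,\tau^2),0,0\bigr)$.
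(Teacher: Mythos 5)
Your proposal is correct and follows exactly the route the paper intends: the paper states this corollary as an immediate application of Theorem \ref{thm: BVM} (no separate proof appears in the supplementary materials), and your argument --- applying \eqref{eq1: BVM dicrete} to the singleton class $\{f\}$, converting conditional weak convergence into convergence of the posterior quantiles, and then playing the recentered coverage event against the classical CLT for $\sqrt{n}(\mathbb{P}_n f - P_0 f)$ via Slutsky --- is the standard fleshing-out of that application. The only caveat worth recording is the degenerate case $\tau^2 = 0$, where the stated coverage $\beta-\alpha$ need not hold as written; this is an implicit non-degeneracy assumption in the corollary itself rather than a gap in your argument.
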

One direct interpretation of the above corollary is one may want $\frac{n(\pi)}{n} \rightarrow 0$ in probability to make  the ``bias" term vanish and therefore the confidence interval for $P_0f$ becomes a regular form $ \lc L_{n, \alpha}f,  L_{n,\beta}f)\rc$. This is true under the case (i) of \cref{cor: discrete true BVM}, or when $f(x)=x$ with $\alpha>4$. Otherwise, the correction $\frac{\sigma n(\pi)}{n}(Hf-\tilde{\mathbb{P}}_nf)$ is necessary as a bias correction to the credible interval. We provide a numerical illustration  corresponding to this scenario in \cref{subsection: numerical}.

However, $P_0$ is of course unknown in the real application and we shall consider Theorem \ref{thm: BVM} without the information from $P_0$. In this case,  one needs to pay especial attention to the parameter $\sigma$, and it is easy to see from both  Theorem \ref{consistency thm} and  Theorem \ref{thm: BVM} that if $\sigma\rightarrow 0$, $P$ is posterior consistent and the Bernstein-von Mises results hold without the bias terms for any $P_0$. But this corresponds to the case that $P$ becomes the Dirichlet process. Thus, one should at least expect the parameter $\sigma$ to be small. Usually, the model parameters are chosen by the empirical Bayesian method, and people can estimate the model parameters by using the maximum likelihood estimators conditional on the observations $\bX$. A well known conclusion \citep{pitman2003poisson, pitman2006combinatorial} in Bayesian nonparametric framework is the   observation $\bX$ from NRMIs induces a random partition structure for $\{1,\cdots,n\}$ as we introduced in \cref{subsection: posterior of NRMIs}. The random partition structure is characterized by the exchangeable partition probability function (EPPF) \citep{pitman2003poisson}, which also plays the rule as the likelihood function of $\sigma$ as explained in e.g., \citep{favaro2021near, ghosal2017fundamentals, franssen2022bernstein}. And the EPPF for the NGGP is given as 
\begin{align*}
\Pi_{\sigma}(n_1,\cdots, n_{n(\pi)})=\frac{\prod_{j=1}^{n(\pi)} (1-\sigma)_{(n_j-1)}}{\Gamma(n)} \int_0^{\infty} u^{n-1}(u+\theta)^{n(\pi)\sigma-n}e^{\frac{a}{\sigma}\lc(u+\theta \rc^{\sigma}-\theta^{\sigma}}du\,,
\end{align*}
where $(1-\sigma)_{(n_j-1)}=\frac{\Gamma(n_j-\sigma)}{\Gamma(1-\sigma)}$. From Theorem 1 in \citep{favaro2021near}, the maximum likelihood estimator $\hat{\sigma}_n$ exists uniquely. Furthermore, the results in Theorem 2 in \citep{favaro2021near} implies that $\hat{\sigma}_n \rightarrow \sigma_0$ in probability with a rate $\sqrt{\log (n)}n^{-\frac{\sigma_0}{2}}$, when $P_0$ is discrete with atoms $x$ satisfying $h(t)=\#\{P_0(\{x\})\geq \frac{1}{t}\}$ is a regularly varying function of exponent $\sigma_0 \in [0,1)$.
\begin{theorem}
Under the assumptions in \cref{thm: BVM}, we have the following results.
\begin{itemize}
\item[(i)] If $\hat{\sigma}_n$ is an estimator based on $\bX$ that converges to $\sigma_0$ in probability, then the convergences in \cref{thm: BVM} hold in probability by replacing $\sigma_n$ by $\hat{\sigma}_n$  and replacing $\sigma$ by $\sigma_0$. In particular, this is true for the maximum likelihood estimator $\hat{\sigma}_n$, if $P_0$ is discrete with atoms $x$ satisfying the condition that $h(t)=\#\{P_0(\{x\})\geq \frac{1}{t}\}$ is a regularly varying function of exponent $\sigma_0 \in [0,1)$.
\item[(ii)] If $\sigma \sim L_{\sigma}$, where $L_{\sigma}$ is a probability law on $[0,1]$ that plays the prior distribution of $\sigma$, then the Bayesian model becomes
\begin{align*}
\bX |P, \sigma \sim P; \quad P|\sigma \sim \NGGP(a, \sigma, \theta, H)
\end{align*}
The convergences in \cref{thm: BVM} hold by replacing $\sigma_n$ by $\sigma$ on the left hand side, and replacing $\sigma$ by $\sigma_0$ on the limiting processes. The $\sigma$ on the left hand side is the posterior random variable.
\end{itemize}
\end{theorem}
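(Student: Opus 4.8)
The plan is to prove both parts by reducing to \cref{thm: BVM} through a two-step decomposition: first isolate the discrepancy coming from the explicit $\sigma$-dependent centering, and then account for the fact that feeding $\hat\sigma_n$ (respectively a random $\sigma$) into the prior also perturbs the \emph{posterior law} of $P$, not merely the centering term. Throughout I would condition on $\bX$, so that $\hat\sigma_n=\hat\sigma_n(\bX)$ is deterministic; the conditional weak convergence in the bounded-Lipschitz metric \eqref{eq1. BL metric} is then an assertion about a sequence of data-indexed laws, and it suffices to control the two perturbations uniformly for $\bX$ in a set of $P_0^\infty$-probability tending to one.

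For part (i), I would first treat the centering. Writing the discrete-case centering of \cref{thm: BVM} with $\hat\sigma_n$ in place of $\sigma_0$, the two centerings differ, after multiplication by $\sqrt n$, by
\[
E_n f:=\sqrt n\,(\hat\sigma_n-\sigma_0)\,\frac{n(\pi)}{n}\,\big(Hf-\tilde{\mathbb{P}}_n f\big)=(\hat\sigma_n-\sigma_0)\,\frac{n(\pi)}{\sqrt n}\,\big(Hf-\tilde{\mathbb{P}}_n f\big).
\]
Since $Hf-\tilde{\mathbb{P}}_n f$ is bounded in $n$ (uniformly over the uniformly bounded $f\in\mathbb{F}$ of \cref{cor: discrete true BVM}), it remains to show $(\hat\sigma_n-\sigma_0)\,n(\pi)/\sqrt n\to 0$ in probability. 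When $P_0$ is discrete with $h(t)=\#\{x:P_0(\{x\})\ge 1/t\}$ regularly varying of exponent $\sigma_0$, one has $n(\pi)\asymp n^{\sigma_0}$ up to a slowly varying factor, so $n(\pi)/\sqrt n\asymp n^{\sigma_0-1/2}$. If $\sigma_0<1/2$ this already tends to $0$ and mere consistency $\hat\sigma_n-\sigma_0=o_P(1)$ suffices; for $\sigma_0\in[1/2,1)$ I would invoke the MLE rate $\hat\sigma_n-\sigma_0=O_P(\sqrt{\log n}\,n^{-\sigma_0/2})$ from \citep{favaro2021near}, which gives $|E_n f|\lesssim \sqrt{\log n}\,n^{(\sigma_0-1)/2}\to 0$ because $\sigma_0<1$. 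This is exactly the regime identified in \cref{cor: discrete true BVM}. The second, more delicate step is to show that replacing the prior parameter $\sigma_0$ by the $n$-varying value $\hat\sigma_n$ does not change the limit of the recentered posterior fluctuations. Here I would revisit the proof of \cref{thm: BVM}, which identifies the limit through the conditional moments/Laplace functionals of the posterior representation in \cref{posterior} (via \cref{them. moment}); every ingredient there — $f_{U_n|\bX}$, the jump densities, $\tau_k$, and $\nu_{(U_n)}$ — is a smooth function of $\sigma$, and the limiting functionals are continuous in $\sigma$ with convergence that is locally uniform on a compact neighborhood of $\sigma_0$. Passing to a further subsequence along which $\hat\sigma_n\to\sigma_0$ almost surely and using this local uniformity, the recentered fluctuations converge to $\mathbb{B}_{P_0}^o$; combining with the vanishing of $E_n$ by a Slutsky argument yields the claim. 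The continuous case is analogous but, because its limit depends on $\sigma$, it requires $\sqrt n(\hat\sigma_n-\sigma_0)\to 0$, consistent with the paper's position that NGGP should not be used for continuous $P_0$.

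For part (ii), I would add a posterior-concentration step for the scalar $\sigma$. The exchangeable partition structure of $\bX$ has likelihood given by the EPPF $\Pi_\sigma$ displayed before the theorem, so inference on $\sigma$ is a regular finite-dimensional problem: using the uniqueness and consistency of the MLE from \citep{favaro2021near} together with a prior $L_\sigma$ charging every neighborhood of $\sigma_0$, a standard parametric argument gives posterior consistency, i.e.\ the marginal posterior of $\sigma\mid\bX$ concentrates at $\sigma_0$, so that a posterior draw satisfies $\sigma=\sigma_0+o_P(1)$. Conditionally on $(\bX,\sigma)$, $P$ is the $\NGGP(a,\sigma,\theta,H)$-posterior, so part (i) applies with the posterior random variable $\sigma$ in the role of $\hat\sigma_n$; finally I would integrate over the posterior of $\sigma$ and use that the test functions $h\in\mathrm{BL}_1$ are uniformly bounded to pass the conditional convergence through, by dominated convergence, to the marginal statement.

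The main obstacle is the second step of part (i): unlike the situation in \cref{cor: discrete true BVM}, where one merely perturbs a deterministic centering, plugging $\hat\sigma_n$ into the \emph{prior} perturbs the entire posterior law of $P$, so \cref{thm: BVM} cannot be invoked as a black box at the fixed value $\sigma_0$. The crux is therefore to upgrade the convergence in \cref{thm: BVM} to be locally uniform in the prior parameter $\sigma$ near $\sigma_0$ — a triangular-array version of the theorem — and to verify that the quantitative rate of $\hat\sigma_n$ dominates the growth of $n(\pi)$ at the $\sqrt n$ scale, which works precisely because $\sigma_0<1$.
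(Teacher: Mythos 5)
Your overall architecture matches what the paper actually does: the paper gives no self-contained proof of this theorem, but defers to the constructions of Section 4.2 of \citep{franssen2022bernstein} (and Section 4.3 for the concentration of $\sigma$), and its supplementary proofs of \cref{thm: BVM} are already written for deterministic sequences $\sigma_n \rightarrow \sigma$ — so the ``triangular-array version'' you single out as the crux is in fact already available in the paper's lemmas. The genuine gap in your proposal is precisely at the step you flag as the main obstacle: passing from deterministic sequences to the data-dependent $\hat{\sigma}_n$. You assert ``local uniformity on a compact neighborhood of $\sigma_0$'' and then invoke a subsequence argument, but neither is justified, and the subsequence trick by itself provably cannot work: $\hat{\sigma}_n$ and the posterior law are functions of the \emph{same} data $\bX$, so knowing that the conditional BL-distance $h_n(\sigma_n)\rightarrow 0$ in probability for every deterministic sequence $\sigma_n\rightarrow\sigma_0$ does not imply $h_n(\hat{\sigma}_n)\rightarrow 0$ (one can build random functions vanishing along every deterministic sequence yet equal to $1$ at a data-chosen point). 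What closes this gap — and what the paper, via \citep{franssen2022bernstein}, actually relies on — is that the posterior given $\bX$ is an explicit, smooth function of $\sigma$ (Dirichlet weights with parameters $n_j-\sigma$, the variable $\kappa_n$, the process $P_{U_n}$), so one reruns the proofs of \cref{lemma: CLT of NGGP}, \cref{lemma: convergence of kappa} and \cref{thm: BVM} carrying the random $\hat{\sigma}_n$ along and applying Slutsky-type arguments to those explicit formulas; this continuity must be checked term by term, not merely invoked.

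A second problem is that your detour through the $\sigma_0$-centering proves a different (weaker) statement than the theorem. The theorem replaces $\sigma_n$ by $\hat{\sigma}_n$ in the centering as well, i.e.\ the matching centering $\mathbb{P}_n+\frac{\hat{\sigma}_n n(\pi)}{n}(H-\tilde{\mathbb{P}}_n)$, and with matching centering no rate on $\hat{\sigma}_n$ is needed — which is exactly why part (i) can be stated for an \emph{arbitrary} consistent estimator. Your reduction to the $\sigma_0$-centering forces control of $E_nf=(\hat{\sigma}_n-\sigma_0)\frac{n(\pi)}{\sqrt{n}}(Hf-\tilde{\mathbb{P}}_nf)$, which for $\sigma_0\in[\frac12,1)$ requires the rate of \citep{favaro2021near}; your rate computations are correct, but the resulting proof only covers the ``in particular'' MLE claim, not the general claim of part (i), and in the continuous case it demands $\sqrt{n}(\hat{\sigma}_n-\sigma_0)\rightarrow 0$, which the theorem does not assume. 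Finally, in part (ii) the posterior concentration of $\sigma$ is not a ``standard parametric argument'': the likelihood is the EPPF of a non-iid partition structure, and the paper points to Section 4.3 of \citep{franssen2022bernstein} precisely because a dedicated argument is needed there.
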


The proof of the above theorem follows the same constructions as the proof in section 4.2 of \citep{franssen2022bernstein}. For the posterior consistency of $\hat{\sigma}_n$, we refer to the details with proofs in section 4.3 of \citep{franssen2022bernstein}. The maximum likelihood estimator is not quite interesting as $\hat{\sigma}_n \rightarrow \sigma_0$ with  $\sigma_0=1$ when $P_0$ is continuous, and $\sigma_0\neq 0$ when $P_0$ is discrete \citep{favaro2021near}.

Besides the parameter $\sigma$, the parameters $a$ and  $\theta$ don't appear in the asymptotic results in Theorem \ref{consistency thm} and Theorem \ref{thm: BVM}, and thus estimators of $a$ and  $\theta$ based on prior distributions or maximum likelihood method won't affect the convergences when $a<< \sqrt{n}$ and $\theta << n^{\sigma}$. And the cases when $\hat{a}_n$ and $\hat{\theta}_n$ converge to $\infty$ as $n \rightarrow \infty$ are not usual and beyond the scope of this work and can be considered in the future works.

\subsection{Numerical illustration}\label{subsection: numerical}

We present the credible intervals for $P_0f$ when $P_0$ is discrete with different types of the number of atoms. To be more precise, let $P_0f=P_0([2,\infty])$ for $P_0=P_1, P_2,P_3,P_4$, where we describe $P_1, P_2,P_3,P_4$ as follows. Let the probability distributions of $P_1, P_2,P_3,P_4$ be on $\mathbb{Z}^+$ are as follows.
\begin{align*}
&P_1(X=1)=0.2, P_1(X=2)=0.2,P_1(X=3)=0.2,P_1(X=4)=0.3,P_1(X=5)=0.1\,,\\
&P_2(X=k) \varpropto k^{-3}\,,\qquad P_3(X=k) \varpropto k^{-2}\,,\qquad P_4(X=k) \varpropto k^{-\frac{3}{2}}\,.
\end{align*}
Obviously, $n(\pi)=5$ for $P_1$. From the result (see e.g., Example 4) in \citep{karlin1967central}, we have the regularly varying functions $h(t)$ corresponding to $P_2,P_3,P_4$ are proportional to $t^{\frac{1}{3}}, t^{\frac{1}{2}}, t^{\frac{2}{3}}$ respectively. And when $n\rightarrow \infty$, the distinct numbers $n(\pi)$ of $P_2,P_3,P_4$ are proportional to $n^{\frac{1}{3}}, n^{\frac{1}{2}}, n^{\frac{2}{3}}$, respectively, from Theorem 1 in \citep{karlin1967central}. Thus, the ``bias" term for $P_1, P_2,P_3,P_4$ goes to $0,0$, some constant, $\infty$, respectively.

For the NGGP, we let $P \sim \NGGP(1,\sigma=0.5, 1, H)$, where $H$ is standard normal distribution. We simulate $P$ through its stick-breaking representation with the generating algorithm in \citep{favaro2016}. To make sure the simulation of $P=\sum_{i=1}^{\infty}w_i \delta_{X_i}$ is accurate, we truncate the infinite sum at some $N$ such that the weight of the tail $\sum_{i=N}^{\infty} w_i<\frac{1}{\sqrt{n}}$, where $n$ is the sample size. We simulate $10000$ replications of the sample $\bX$ from  $P_1, P_2,P_3,P_4$ with the sample size $n=10,100,1000, 10000,100000$ respectively. For the sample from $P_1$, we construct one $95\%$ credible interval for each sample for $P_1([2,\infty))$ with the ``bias" correction as in \cref{cor: confidence interval}  and compute the proportion that the true value $P_1([2,\infty))$ belongs to the intervals of 10000 replications. And we also compute the same proportion without the ``bias" correction.  The results of  $P_1, P_2,P_3,P_4$ are given in tables \ref{table1: bias cl} and \ref{table2: unbias cl}.
\begin{table}
\begin{center}
\begin{tabular}{c| c c c c c} 
 \hline
$n$ & $10$ & $100$ & $1000$ & $10000$ & $100000$  \\ 
\hline
 $P_1$ & $0.791$ & $0.952$ & $0.961$ & $0.967$ & $0.986$  \\
 $P_2$ & $0.695$ & $0.857$ & $0.928$ & $0.917$ & $0.931$  \\ 
 $P_3$ & $0.712$ & $0.785$ & $0.811$ & $0.727$ & $0.754$  \\
 $P_4$& $0.601$ & $0.292$ & $0.078$ & $0.000$ & $0.000$  \\
 \hline
\end{tabular}
\end{center}
\caption{\label{table1: bias cl} Proportion of coverage of the true value for the $95\%$ credible interval without ``bias" correction.}
\end{table}

\begin{table}
\begin{center}
\begin{tabular}{c| c c c c c} 
 \hline
$n$ & $10$ & $100$ & $1000$ & $10000$ & $100000$  \\ 
\hline
 $P_1$ & $0.977$ & $0.989$ & $0.991$ & $0.995$ & $0.997$  \\
 $P_2$ & $0.914$ & $0.938$ & $0.951$ & $0.933$ & $0.941$  \\ 
 $P_3$ & $0.863$ & $0.931$ & $0.962$ & $0.960$ & $0.978$  \\
 $P_4$& $0.901$ & $0.955$ & $0.969$ & $0.966$ & $0.956$  \\
 \hline
\end{tabular}
\end{center}
\caption{\label{table2: unbias cl} Proportion of coverage of the true value for the $95\%$ credible interval with ``bias" correction.}
\end{table}
Since the ``bias" terms for $P_1$ and $P_2$ vanish as $n \rightarrow \infty$, the proportions of the coverage of the true value are large for both with and without ``bias" correction. And the $95\%$ credible intervals for $P_3f$ and $P_4f$ are not performing good without ``bias" correction.

As for the normality convergence, we draw the marginal density plots in figure \ref{fig: density} for $P_1([2,\infty))$ given the sample $\bX$ with size $n=10, 100, 1000, 10000, 100000$ respectively. Both plots are generated from $1000000$ replicates, the true mean of $P_1([2,\infty))$ is $0.8$ The marginal density for $P_1([2,\infty))$ is skewed when $n=10, 100$, and symmetric when $n=1000$ and larger.
\begin{figure}[h]
  \includegraphics[width=0.32\linewidth , height=5cm]{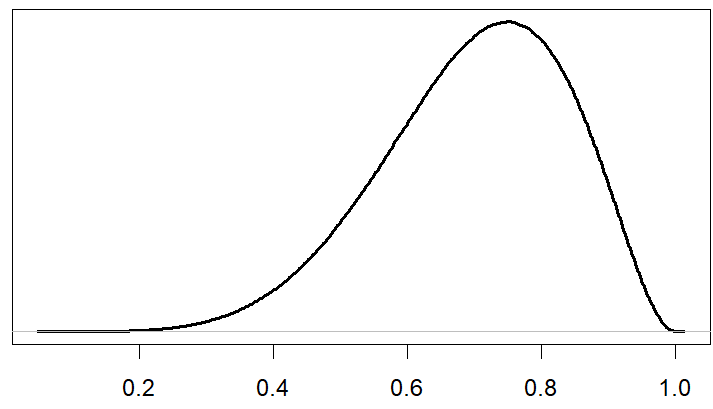}
  \includegraphics[width=0.32\linewidth , height=5cm]{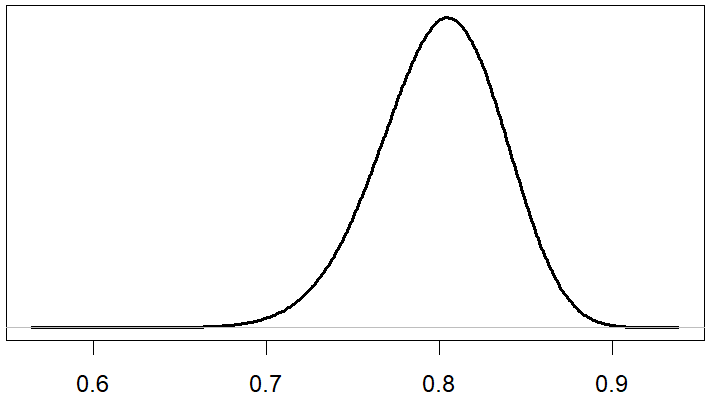}
  \includegraphics[width=0.32\linewidth , height=5cm]{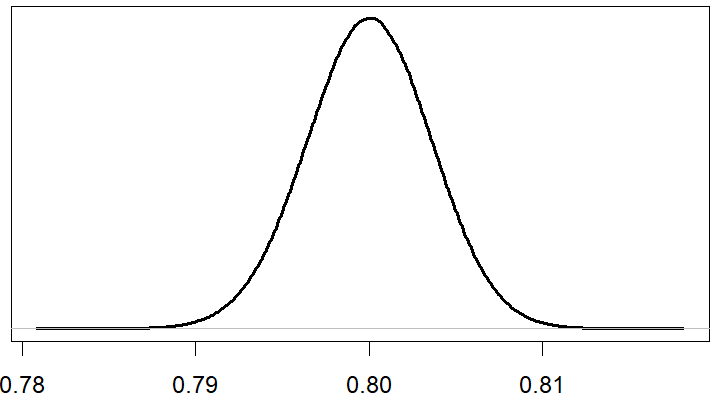}
  \includegraphics[width=0.33\linewidth , height=5cm]{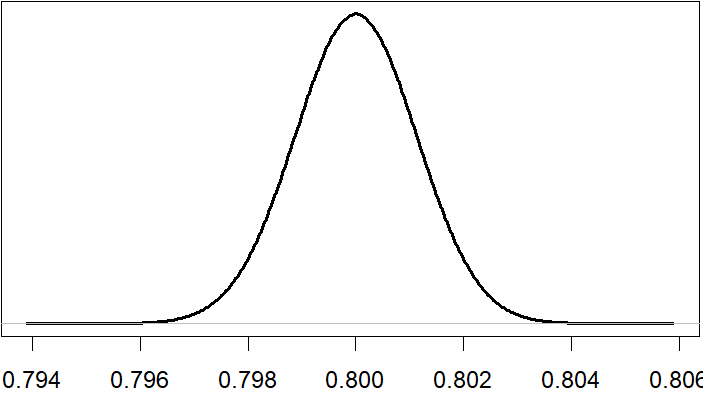}
  \includegraphics[width=0.33\linewidth , height=5cm]{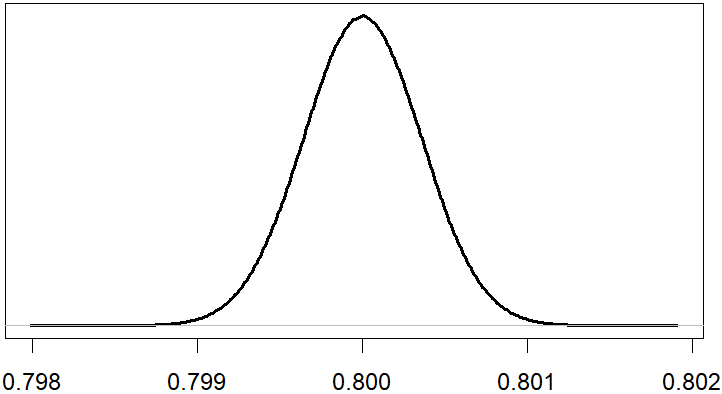}
\caption{The marginal densities for $P_1([2,\infty))$ with sample size $n=10,100,1000, 10000,100000$ follow the order from top left to bottom right.}
\label{fig: density}
\end{figure}
\section{Discussion}\label{sec.discussion}

To the best of  our knowledge, the L\'evy intensities of  the well-studied   NRMIs 
up-to-date are  given in the form of  the gamma  density:  $s^{-\sigma-1}e^{-\beta s}$.  It turns out that with the shape parameter $\sigma=0$, the posterior consistency is always   guaranteed for any ``true" prior  distribution $P_0$. Otherwise,  the posterior consistency only holds for discrete prior  $P_0$ but not   for continuous   $P_0$. Such phenomenon does naturally make sense due to the discreteness of NRMIs  (the completely random measures \citep{kingman1975}).  As explained in the Bayesian literature, if $P_0$ is diffusive  and the prior guess for the sample distribution $\alpha \neq P_0$, then the prior guess will always contribute to the posterior, no matter how large is the sample size. In such sense, the Bayesian nonparametric models never behave ``better" than the empirical models asymptomatically. However, this doesn't mean the NRMIs are not useful. On the one hand, we are not able to know the ``true" distribution of a given sample with any size $n$, also the sample size $n$ will never be $\infty$, a prior guess of the random probability measure based on experience could make the model suitable. On the other hand, the NRMIs behave great for the data from discrete distributions. Furthermore, the mixture and hierarchical Bayesian nonparametric models based on NRMIs are showing great success in the applications and consistency behaviours \citep{lijoi2005consistency}. And the class of NRMIs is much larger than we expected, so that more study is necessary to develop more flexible subclasses of NRMIs or more general NRMIs like classes that are satisfying the consistency property.  The results in this work provides a guideline of choosing the proper intensity $\rho(s|x)$, for example, the generalized Dirchlet process and the generalized extended gamma NRMI are good choice in the Bayesian nonparametric applications and they both show some flexibility. Besides, we may let $\sigma \rightarrow 0$ by assigning a randomness on $\sigma$, or one may construct $\alpha$ to depend on $\rho(ds|x)$ to deduct $\bar{C}_1$.

Due to the complexity of the posterior of the NRMIs, it is not easy to present a Bernstein-von Mises like result to give the limiting process of posterior of general NRMIs. The result for the normalized generalized gamma process, along with the works in \citep{lo1983,lo1986remark, ray2021bernstein, huzhang2022functional,james2008, franssen2022bernstein}, shed some light in discovering the Bernstein-von Mises theorem for general NRMIs.
\section*{Acknowledgments}
\bibliographystyle{Chicago}
\bibliography{huzhang}
\section{Supplementary Materials}\label{supplementary}
In this section, we prove
 \cref{them. moment},\cref{consistency thm} and \cref{thm: BVM}.
\subsection*{Proof of \cref{them. moment}}
Let $\mathcal{I}=\mathbb{E}[(\tP(A)|\bX)^m]$.
 Then, by  Theorem \ref{posterior}, $\mathcal{I}$ can be computed as follows.
\begin{align}
&\mathcal{I}= \int_0^{\infty} \E[(\tP(A)|U_n = u, \bX)^m]f_{U_n|\bX}(u|\bX)du\nonumber\\
&=\int_0^{\infty} \E\left[\left(\frac{\tilde{\mu}_{(U_n)}(A)}{T_{(U_n)} + \sum_{j=1}^{n(\pi)}J_j} + \sum_{j=1}^{n(\pi)} \frac{J_j \delta_{Y_j}(A)}{T_{(U_n)} + \sum_{j=1}^{n(\pi)}J_j}\right)^m\right]f_{U_n|\bX}(u|\bX)du\nonumber\\
&=\sum_{k=0}^m {m \choose k} \frac{1}{\Gamma(m)}\int_{0}^{\infty} \int_0^{\infty} y^{m-1} \E \Bigg[e^{ -y(T_{(U_n)} + \sum_{j=1}^{n(\pi)}J_j)}\tilde{\mu}_{(U_n)}(A)^{m-k}  \nonumber \\
&\qquad \qquad \qquad \times \left(\sum_{j=1}^{n(\pi)}J_j \delta_{Y_j}(A)\right)^k\Bigg] f_{U_n|\bX}(u|\bX)dydu\,. \label{proof1.1}
\end{align}
Noticing  that $T_{(U_n)} =\tilde{\mu}_{(U_n)}(A)+\tilde{\mu}_{(U_n)}(A^c)$, where $\tilde{\mu}_{(U_n)}(A)$ and $\tilde{\mu}_{(U_n)}(A^c)$ are independent, we can rewrite  the expectation in \eqref{proof1.1} as
\begin{align}
 \mathcal{I}=&\sum_{k=0}^m {m \choose k} \frac{1}{\Gamma(m)}\int_{0}^{\infty} \int_0^{\infty} y^{m-1} \E \left[e^{- y \tilde{\mu}_{(U_n)}(A)}\tilde{\mu}_{(U_n)}(A)^{m-k}\right] \E \left[e^{ -y \tilde{\mu}_{(U_n)}(A^c)}\right]\nonumber\\
& \qquad \qquad \qquad  \E\left[e^{-y( \sum_{j=1}^{n(\pi)}J_j)}\left(\sum_{j=1}^{n(\pi)}J_j \delta_{Y_j}(A)\right)^k\right] f_{U_n|\bX}(u|\bX)dydu\nonumber\\
 =&\sum_{k=0}^m {m \choose k} \frac{1}{\Gamma(m)}\int_{0}^{\infty} \int_0^{\infty} y^{m-1} (-1)^{m-k}\E \left[\frac{d^{m-k}}{dy^{m-k}}e^{- y \tilde{\mu}_{(U_n)}(A)}\right] \E \left[e^{ -y \tilde{\mu}_{(U_n)}(A^c)}\right]\nonumber\\
&  \qquad \qquad \qquad  \left(\sum {k \choose l_1, \cdots, l_{n(\pi)}}\E\left[e^{-y( \sum_{j=1}^{n(\pi)}J_j)}\prod_{j=1}^{n(\pi)}J_j^{l_j} \delta_{Y_j}(A)\right]\right) f_{U_n|\bX}(u|\bX)dydu\nonumber\\
 =&\sum_{k=0}^m {m \choose k} \frac{1}{\Gamma(m)}\int_{0}^{\infty} \int_0^{\infty} y^{m-1} (-1)^{m-k}\E [\frac{d^{m-k}}{dy^{m-k}}e^{- y \tilde{\mu}_{(U_n)}(A)}] \E [e^{ -y \tilde{\mu}_{(U_n)}(A^c)}]\nonumber\\
& \qquad \qquad \qquad \left(\sum {k \choose l_1, \cdots, l_{n(\pi)}}\E[\prod_{j=1}^{n(\pi)}(-1)^{l_j}\frac{d^{l_j}}{d^{y^{l_j}}}e^{-yJ_j}\delta_{Y_j}(A)]\right) f_{U_n|\bX}(u|\bX)dydu\,,\nonumber
\end{align}
where the sum in front of ${k \choose l_1, \cdots, l_{n(\pi)}}$ is   over all the vector $(l_1, \cdots, l_{n(\pi)})$ such that $\sum_{j=1}^{n(\pi)}l_j=k$. 
Taking the derivatives inside the expectation and using  the Laplace transform of $\tilde{\mu}_{(U_n)}(A)$, we have 
\begin{align}
 \mathcal{I}
 =\sum_{k=0}^m {m \choose k} &\frac{1}{\Gamma(m)}\int_{0}^{\infty} \int_0^{\infty} y^{m-1} (-1)^{m-k}\frac{d^{m-k}}{dy^{m-k}}\E [e^{- y \tilde{\mu}_{(U_n)}(A)}] \E [e^{ -y \tilde{\mu}_{(U_n)}(A^c)}]\nonumber \\
&\left(\sum {k \choose l_1, \cdots, l_{n(\pi)}}\prod_{j=1}^{n(\pi)}\frac{\tau_{n_j+l_j}(u+y,Y_j)}{\tau_{n_j}(u,Y_j)}\delta_{Y_j}(A)\right) f_{U_n|\bX}(u|\bX)dydu\nonumber\\
 =\sum_{k=0}^m {m \choose k}& \frac{1}{\Gamma(m)}\int_{0}^{\infty} \int_0^{\infty} y^{m-1} V_{\alpha(A)}^{(m-k)}(u,y) e^{-\psi_{\mathbb{X}}(u,y)}\nonumber\\
&\left(\sum {k \choose l_1, \cdots, l_{n(\pi)}}\prod_{j=1}^{n(\pi)}\frac{\tau_{n_j+l_j}(u+y,Y_j)}{\tau_{n_j}(u,Y_j)}\delta_{Y_j}(A)\right) f_{U_n|\bX}(u|\bX)dydu\,,\label{proof1.2}
\end{align}
where $\psi_{\mathbb{X}}(u,y)=\int_{\mathbb{X}}\int_0^{\infty}(1-e^{-ys})e^{-us}\rho(ds|x)\alpha(dx)$.
By the fact that $$f_{U_n|\bX}(u|\bX) \propto u^{n-1}e^{-\psi_{\mathbb{X}}(u)}\prod_{j=1}^{n(\pi)}\tau_{n_j}(u,Y_j)$$ and  $e^{-\psi_{\mathbb{X}}(u)}e^{-\psi_{\mathbb{X}}(u,y)}=e^{-\psi_{\mathbb{X}}(u+y)}$, we further simplify \eqref{proof1.2}  to 
\begin{align}
\mathcal{I}=\sum_{k=0}^m {m \choose k} & \frac{1}{\Gamma(m)}\int_{0}^{\infty} \int_0^{\infty} y^{m-1}u^{n-1} V_{\alpha(A)}^{(m-k)}(u+y) e^{-\psi_{\mathbb{X}}(u+y)}\nonumber\\
&\left(\sum {k \choose l_1, \cdots, l_{n(\pi)}}\prod_{j=1}^{n(\pi)}\tau_{n_j+l_j}(u+y,Y_j)\delta_{Y_j}(A)\right)dydu\nonumber\\
 =\sum_{k=0}^m {m \choose k} &\frac{1}{\Gamma(m)}\int_{0}^{\infty} \int_0^{\infty} y^{m-1}u^{n-1} V_{\alpha(A)}^{(m-k)}(u+y) e^{-\psi_{\mathbb{X}}(u+y)}\prod_{j=1}^{n(\pi)}\tau_{n_j}(u+y,Y_j)\nonumber\\
&\left(\sum {k \choose l_1, \cdots, l_{n(\pi)}}\prod_{j=1}^{n(\pi)}\frac{\tau_{n_j+l_j}(u+y,Y_j)}{\tau_{n_j}(u+y,Y_j)}\delta_{Y_j}(A)\right)dydu\,.\nonumber
\end{align}
The change of variable $(w,z)=(u+y,u)$ yields
\begin{align}
\mathcal{I} =\sum_{k=0}^m {m \choose k} &\frac{1}{\Gamma(m)}\int_{0}^{\infty} \int_0^{w} (w-z)^{m-1}z^{n-1} V_{\alpha(A)}^{(m-k)}(w) e^{-\psi_{\mathbb{X}}(w)}\prod_{j=1}^{n(\pi)}\tau_{n_j}(w,Y_j)\nonumber\\
&\left(\sum {k \choose l_1, \cdots, l_{n(\pi)}}\prod_{j=1}^{n(\pi)}\frac{\tau_{n_j+l_j}(w,Y_j)}{\tau_{n_j}(w,Y_j)}\delta_{Y_j}(A)\right)dzdw\,.\nonumber
\end{align}
Using 
\begin{align}
\int_0^{w} (w-z)^{m-1}z^{n-1}dz
=w^{m+n-1}\frac{\Gamma(m)\Gamma(n)}{\Gamma(m+n)}\,,\nonumber
\end{align}
we obtain  
\begin{align}
 \mathcal{I}=&\sum_{k=0}^m {m \choose k}  \frac{1}{\Gamma(m)}\frac{\Gamma(m)\Gamma(n)}{\Gamma(m+n)}\int_{0}^{\infty}  w^{n+m-1} V_{\alpha(A)}^{(m-k)}(w) e^{-\psi_{\mathbb{X}}(w)}\prod_{j=1}^{n(\pi)}\tau_{n_j}(w,Y_j)\nonumber\\
&\qquad\qquad\qquad \left(\sum {k \choose l_1, \cdots, l_{n(\pi)}}\prod_{j=1}^{n(\pi)}\frac{\tau_{n_j+l_j}(w,Y_j)}{\tau_{n_j}(w,Y_j)}\delta_{Y_j}(A)\right)dw\nonumber\\
=&\frac{\Gamma(n)}{\Gamma(m+n)}\sum_{k=0}^m {m \choose k}
  \int_{0}^{\infty}  w^{m}f_{U_n}(w) V_{\alpha(A)}^{(m-k)}(w) \nonumber\\
& \qquad\qquad\qquad \left(\sum {k \choose l_1, \cdots, l_{n(\pi)}}\prod_{j=1}^{n(\pi)}\frac{\tau_{n_j+l_j}(w,Y_j)}{\tau_{n_j}(w,Y_j)}\delta_{Y_j}(A)\right)dw\nonumber\\
 =&\frac{\Gamma(n)}{\Gamma(m+n)}\sum_{0 \leq l_1+\cdots +l_{n(\pi)} \leq m}^m {m \choose l_1, \cdots, l_{n(\pi)}}  \int_{0}^{\infty}  w^{m}f_{U_n}(w) V_{\alpha(A)}^{(m-(l_1+\cdots +l_{n(\pi)}))}(w)\nonumber\\ 
&  \qquad\qquad\qquad \left(\prod_{j=1}^{n(\pi)}\frac{\tau_{n_j+l_j}(w,Y_j)}{\tau_{n_j}(w,Y_j)}\delta_{Y_j}(A)\right)dw\,.\nonumber
\end{align}
This is \eqref{moments}.  

For any   family of pairwise disjoint sets  $\{A_1, \cdots, A_q\}$ in $\mathcal{X}$ and   for any  positive integers $\{m_1,\cdots,
 m_q\}$  we denote  $A_{q+1}=(\cup_{i=1}^q A_i)^c$, $m_{q+1}=0$, and    $m=\sum_{i=1}^q m_i $.
 For any  sample $\{X_i\}_{i=1}^n$ from $P$, let $\{Y_j\}_{j=1}^{n(\pi)}$  be the distinct values  of $\{X_i\}_{i=1}^n$.  Let $\lambda_i=\{j: Y_j \in A_i\}$   be  the set of the index of $Y_j$'s that in $A_i$ and  we denote by $\#(\lambda_i)$ the number of components in $\lambda_i$. We can  compute the following moments easily.
\begin{align}
 \mathcal{L}:=&\E\left[ \tP(A_1)^{m_1} \cdots \tP(A_q)^{m_q}|\bX \right] \\
 =&\int_0^{\infty}\E\left[ \tP(A_1)^{m_1} \cdots \tP(A_q)^{m_q}|U_n=u, \bX \right]f_{U_n|\bX}(u|\bX)du\nonumber\\
=& \frac{1}{\Gamma(m)}\int_{0}^{\infty} \int_0^{\infty} y^{m-1} \E \left[e^{ -y(T_{u} + \sum_{j=1}^{n(\pi)}J_j)}\prod_{i=1}^{q+1} \left(\mu_u(A_i)+\sum_{j=1}^{n(\pi)}J_j \delta_{Y_j}(A_i)\right)^{m_i}\right]f_{U_n|\bX}(u|\bX)dydu\nonumber\\
=& \frac{1}{\Gamma(m)}\int_{0}^{\infty} \int_0^{\infty} y^{m-1} \prod_{i=1}^{q+1} \Bigg\{ \sum_{k=0}^{m_i} {m_i \choose k } \E \left[e^{ -y\mu_u(A_i)}\mu_u(A_i)^{m_i-k}\right]\nonumber\\
&\qquad\qquad\qquad \E \left[e^{ -y(\sum_{j\in \lambda_i}J_j)}\left(\sum_{j\in \lambda_i}J_j\right)^k\right]\Bigg\}  f_{U_n|\bX}(u|\bX)dydu\,. \nonumber
\end{align}
A similar  computation as that for  $\mathcal{I}$   yields 
\begin{align}
 \mathcal{L}=&\frac{\Gamma(n)}{\Gamma(m+n)}\int_{0}^{\infty}  w^{n+m-1}  e^{-\psi_{\mathbb{X}}(w)}\prod_{j=1}^{n(\pi)}\tau_{n_j}(w,Y_j) \prod_{i=1}^{q+1} \Bigg\{\sum_{0 \leq l_1+\cdots +l_{\#(\lambda_i)} \leq m_i}^{m_i} {m_i \choose l_1, \cdots, l_{\#(\lambda_i)}}\nonumber\\
&\qquad\qquad\qquad  V_{\alpha(A_i)}^{(m_i-(l_1+\cdots +l_{max(\lambda_i)}))}(w)\left(\prod_{j\in \lambda_i}\frac{\tau_{n_j+l_j}(w,Y_j)}{\tau_{n_j}(w,Y_j)}\right)\Bigg\}dw\nonumber\\
=& \frac{\Gamma(n)}{\Gamma(m+n)}\int_{0}^{\infty}  w^{m}  f_{U_n|\bX}(w|\bX) \prod_{i=1}^{q+1} \Bigg \{\sum_{0 \leq l_1+\cdots +l_{\#(\lambda_i)} \leq m_i}^{m_i} {m_i \choose l_1, \cdots, l_{\#(\lambda_i)}}\nonumber\\
&\qquad\qquad\qquad  V_{\alpha(A_i)}^{(m_i-(l_1+\cdots +l_{\#(\lambda_i)}))}(w)\left(\prod_{j\in \lambda_i}\frac{\tau_{n_j+l_j}(w,Y_j)}{\tau_{n_j}(w,Y_j)}\right)\Bigg\}dw\,.\nonumber
\end{align}
This is part (ii) of the theorem. Then the proof of \cref{them. moment}   is completed.  

\subsection*{Proof of   Theorem \ref{consistency thm}}
We need  the following lemma  to prove  Theorem \ref{consistency thm}.
\begin{lemma}\label{lemma1}
Under the assumption \ref{condition A}, we have for  any $y\in \mathbb{X}$ and $k \in \mathbb{Z}^+$, 
\begin{align}
\lim_{n\rightarrow \infty}\int_0^{\infty}\tau_k(u,y)f_{U_n|\bX}(u|\bX) du = k-C_k(y)\,.\label{proof2.lemma.1}
\end{align}
 
\end{lemma}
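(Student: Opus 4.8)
The plan is to show that, as $n\to\infty$, the conditional law of the latent variable $U_n$ given $\bX$ escapes to $+\infty$, and then to transfer the elementary $u=\infty$ limit of the ratio controlled by Assumption \ref{condition A} to the integral by a concentration (bounded-convergence) argument. Throughout I write $g_k(u,y):=u\,\tau_{k+1}(u,y)/\tau_k(u,y)$ for the quantity that Assumption \ref{condition A} governs; this is the term whose posterior average \eqref{proof2.lemma.1} is really about, since it is exactly what multiplies $1/n$ in the posterior first moment extracted from \cref{them. moment} (so I read the left side of \eqref{proof2.lemma.1} accordingly).

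First I would record the pointwise behaviour of the integrand. By Assumption \ref{condition A}, $g_k(\cdot,y)$ is nondecreasing and bounded above by $\phi(u)\,(k-C_k(y))\le k-C_k(y)$, so $\lim_{u\to\infty}g_k(u,y)$ exists; since $\phi(u)\to1$, the limit equals $k-C_k(y)$ (this is precisely the normalization identifying $C_k(y)$, and it is checked directly in \cref{example: NGGP}, \cref{example: GDP} and \cref{example: extended gamma}). Moreover $g_k(u,y)\to0$ as $u\downarrow0$ because of the prefactor $u$, and $g_k(\cdot,y)$ is bounded on every $[0,M]$. Thus, given $\ep>0$, one fixes $M=M(\ep)$ with $k-C_k(y)-\ep\le g_k(u,y)\le k-C_k(y)$ for all $u\ge M$.

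The heart of the matter is the concentration statement: for every fixed $M>0$, $\int_0^M f_{U_n|\bX}(u|\bX)\,du\to0$ as $n\to\infty$. I would prove this from \eqref{posterior density of U_n}, $f_{U_n|\bX}(u|\bX)\propto u^{n-1}e^{-\psi_{\mathbb{X}}(u)}\prod_{j=1}^{n(\pi)}\tau_{n_j}(u,Y_j)$, by analysing its logarithmic derivative. Using $\frac{d}{du}\tau_k(u,y)=-\tau_{k+1}(u,y)$,
\[
\frac{d}{du}\log f_{U_n|\bX}(u|\bX)=\frac{n-1}{u}-\psi_{\mathbb{X}}'(u)-\sum_{j=1}^{n(\pi)}\frac{\tau_{n_j+1}(u,Y_j)}{\tau_{n_j}(u,Y_j)}\,.
\]
Assumption \ref{condition A} gives $\sum_{j}\tau_{n_j+1}/\tau_{n_j}\le\frac{\phi(u)}{u}\sum_j(n_j-C_{n_j}(Y_j))\le\phi(u)\,n/u$ (using $\sum_j n_j=n$ and $C_k\ge0$), while $\psi_{\mathbb{X}}'(u)=\int_{\mathbb{X}}\tau_1(u,x)\alpha(dx)$ is finite and nonincreasing. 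Hence on a window $[M,2M]$,
\[
\frac{d}{du}\log f_{U_n|\bX}(u|\bX)\ \ge\ \frac{n\,(1-\phi(2M))-1}{2M}-\psi_{\mathbb{X}}'(M)\ \xrightarrow[n\to\infty]{}\ +\infty\,,
\]
because $\phi(2M)<1$ for finite $M$. The same bound (with $\phi(M)<1$) shows $f_{U_n|\bX}$ is increasing on $[0,M]$ for large $n$, so $\int_0^M f_{U_n|\bX}\le M f_{U_n|\bX}(M)$, whereas integrating the displayed derivative over $[M,2M]$ forces $f_{U_n|\bX}(3M/2)/f_{U_n|\bX}(M)\to\infty$ and hence $\int_M^{2M}f_{U_n|\bX}\ge\tfrac{M}{2}f_{U_n|\bX}(3M/2)$; dividing, $\int_0^M f_{U_n|\bX}/\int_M^{2M}f_{U_n|\bX}\to0$, and dividing by the total mass yields $\int_0^M f_{U_n|\bX}\to0$. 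All these bounds are deterministic consequences of Assumption \ref{condition A}, uniform in the random partition data $(n(\pi),\{n_j\},\{Y_j\})$, so the statement holds for every sample path, giving the $P_0^\infty$-a.s. conclusion for free.

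Finally I would combine the ingredients: splitting $\int_0^\infty g_k(u,y)f_{U_n|\bX}(u|\bX)\,du$ at $M$, the piece on $[0,M]$ is at most $(k-C_k(y))\int_0^M f_{U_n|\bX}\to0$ by concentration and boundedness of $g_k$ there, while on $[M,\infty)$ the bounds $k-C_k(y)-\ep\le g_k\le k-C_k(y)$ together with $\int_M^\infty f_{U_n|\bX}=1-\int_0^M f_{U_n|\bX}\to1$ sandwich the integral between $(k-C_k(y)-\ep)(1-o(1))$ and $(k-C_k(y))(1-o(1))$. Letting $n\to\infty$ and then $\ep\downarrow0$ gives the claimed limit $k-C_k(y)$. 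The main obstacle is the concentration step: one must extract the divergence of the mode of $f_{U_n|\bX}$ from \eqref{posterior density of U_n} uniformly in the sample, and it is exactly the bound $\tau_{k+1}/\tau_k\le\phi(u)(k-C_k)/u$ from Assumption \ref{condition A} that makes this control uniform and sample-path independent.
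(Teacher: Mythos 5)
Your proof is correct, and it also reads the lemma the right way: the quantity being averaged in \eqref{proof2.lemma.1} is really $u\,\tau_{k+1}(u,y)/\tau_k(u,y)$ (as written the statement says $\tau_k(u,y)$, a typo), which is exactly what the paper's own proof analyses. At the level of strategy you and the paper share the same skeleton: force the conditional law of $U_n$ given $\bX$ to escape to $+\infty$, then transfer the monotone limit $\lim_{u\to\infty}u\,\tau_{k+1}(u,y)/\tau_k(u,y)=k-C_k(y)$ to the posterior average. Where you genuinely diverge is in how the escape is proved. The paper works with the unnormalized density $g_n(u)=u^{n-1}e^{-\psi(u)}\prod_j\tau_{n_j}(u,Y_j)$, uses Assumption \ref{condition A} to show that $\tilde g_n(u)=u\int_{\mathbb{X}}\tau_1(u,y)\alpha(dy)+\sum_j u\,\tau_{n_j+1}(u,Y_j)/\tau_{n_j}(u,Y_j)$ is nondecreasing, locates the mode $u_{n,n(\pi)}^2$ as the root of $\tilde g_n=n-1$, proves by contradiction (via $\phi<1$) that the mode diverges, and then runs a chain of integral comparisons across the threshold $u_{n,n(\pi)}=\sqrt{u_{n,n(\pi)}^2}$, ending with the explicit minorant $(u_{n,n(\pi)}-1)\,\tau_{k+1}(u_{n,n(\pi)},y)/\tau_k(u_{n,n(\pi)},y)\to k-C_k(y)$. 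You never locate the mode: the bound $\sum_j\tau_{n_j+1}/\tau_{n_j}\le\phi(u)\,n/u$ makes the log-derivative of the posterior density at least $\bigl[n(1-\phi(2M))-1\bigr]/(2M)-\psi_{\mathbb{X}}'(M)\to+\infty$ on any fixed window, which yields monotonicity on $[0,2M]$ plus an exploding ratio $f(3M/2)/f(M)$, hence $\int_0^M f_{U_n|\bX}\to0$, and the $\varepsilon$-sandwich finishes. Your route is more elementary (no contradiction step, no square-root threshold, no unimodality bookkeeping) and makes the uniformity in the partition data $(n(\pi),\{n_j\},\{Y_j\})$ explicit; the paper's route buys a concrete minorizing sequence rather than an $\varepsilon$-argument.

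Two small points to patch. First, your claim that $f_{U_n|\bX}$ is increasing on $[0,M]$ for large $n$ needs $u\psi_{\mathbb{X}}'(u)$ bounded near $u=0$, since $\psi_{\mathbb{X}}'(u)$ itself can blow up as $u\downarrow0$ (e.g.\ stable-type intensities); this is harmless because the L\'evy condition $\int_0^\infty\int_{\mathbb{X}}\min(s,1)\nu(ds,dx)<\infty$ gives $u\tau_1(u,x)\le u\int_0^1 s\,\rho(ds|x)+e^{-1}\rho([1,\infty)|x)$, whose $\alpha$-integral is bounded on $(0,M]$, but it should be said. Second, like the paper, you read Assumption \ref{condition A} as identifying $k-C_k(y)$ with the \emph{exact} limit of $u\,\tau_{k+1}/\tau_k$; the displayed condition literally gives only an upper bound, so this is an interpretive convention — but it is the same convention the paper's proof uses, so it is not a discrepancy.
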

\begin{proof}
Let $g_n(u)$ be  a constant multiple of  the density   of $f_{U_n|\bX}(u|\bX)$ given by  \eqref{posterior density of U_n}.   Namely, 
 \begin{align}
 g_n(u)=& u^{n-1}e^{-\psi(u)} \prod_{j=1}^{n(\pi)}\tau_{n_j}(u,Y_j)\nonumber\\
 =&u^{n-1}e^{-a\int_{\mathbb{X}}\int_0^{\infty}(1-e^{-us})\rho(ds|x)H(dx)}\prod_{j=1}^{n(\pi)}\int_0^{\infty}s^{n_j}e^{-us}\rho(ds|Y_j)\\
 f_{U_n|\bX}(u|\bX)=& \frac{g_n(u)}{\int_0^\infty g_n(u) du}\,.
 \end{align}
The derivative of $g_n(u)$ is computed  as follows,
 \begin{align}
g_n'(u)=u^{n-2}e^{-\psi_{\mathbb{X}}(u)}\prod_{j=1}^{n(\pi)}\tau_{n_j}(u,Y_j)\left[ n-1-\left(u\int_{\mathbb{X}}\tau_1(u,y)\alpha(dy)+\sum_{j=1}^{n(\pi)}u \frac{\tau_{n_j+1}(u,Y_j)}{\tau_{n_j}(u,Y_j)}\right)\right]\,.\nonumber
\end{align}
Let $h_n(u)=u\int_{\mathbb{X}}\tau_1(u,y)\alpha(dy)$, then $h_n'(u)=\int_{\mathbb{X}}\left(\tau_1(u,y)-u\tau_2(u,y)\right)\alpha(dy)$. By the assumption \ref{condition A}, $u\frac{\tau_2(u,y)}{\tau_1(u,y)}\leq 1$. This means  $h'_n(u)\geq 0$ and then $h_n(u)$ is nondecreasing in $u$.  Similarly,  from  the assumption \ref{condition A},  it follows that $u \frac{\tau_{n_j+1}(u,Y_j)}{\tau_{n_j}(u,Y_j)}$ is  also  nondecreasing in $u$ for all $n_j$. Thus,  we have  
\[
\tilde g_n(u):= u\int_{\mathbb{X}}\tau_1(u,y)\alpha(dy)+\sum_{j=1}^{n(\pi)}u \frac{\tau_{n_j+1}(u,Y_j)}{\tau_{n_j}(u,Y_j)} 
\]
 is nondecreasing in $u$.  Since $g_n(u)$ is a continuously differentiable function such that $\int_0^\infty g_n(u) du<\infty$, it is then bounded and attain its maximum point at some point    $u_{n,n(\pi)}^2$  satisfying $g_n'(u_{n,n(\pi)}^2)=0$ or 
$\tilde g_n( u_{n,n(\pi)}^2)=n-1$.    Note that $\tilde g_n$ is 
also a continuous function and is then bounded on bounded interval. We claim that   $u_{n,n(\pi)}^2\to \infty$ as    $n\rightarrow \infty$. In fact, 
by assumption \ref{condition A}, $u \frac{\tau_{k+1}(u,y)}{\tau_{k}(u,y)}\le \phi (u)(k-C_k(y))$, $\forall k \in \mathbb{Z}^+$ and $y \in \mathbb{X}$, for some function  $\phi (u)\in (0,1)$ which is nondecreasing in $u$ and $\lim_{u\to \infty}\phi (u)=1$. Assume that $u_{n,n(\pi)}^2 < \infty$ as    $n\rightarrow \infty$. Then,
$\phi(u_{n,n(\pi)}^2)=\alpha<1$, which implies $\sum_{j=1}^{n(\pi)}u \frac{\tau_{n_j+1}(u,Y_j)}{\tau_{n_j}(u,Y_j)} <\alpha\left(n-\sum_{j=1}^{n(\pi)}C_j(Y_j)\right)$. Therefore,
\[
n-1=\tilde g_n(u_{n,n(\pi)}^2)<u_{n,n(\pi)}^2\int_{\mathbb{X}}\tau_1(u_{n,n(\pi)}^2,y)\alpha(dy)+\alpha\left(n-\sum_{j=1}^{n(\pi)}C_j(Y_j)\right)\,,
\]
which implies $$n(1-\alpha)+\alpha\sum_{j=1}^{n(\pi)}C_j(Y_j)-1<u_{n,n(\pi)}^2\int_{\mathbb{X}}\tau_1(u_{n,n(\pi)}^2,y)\alpha(dy)<\infty\,,$$
which is a contradiction.

Denote
\[
\tilde \tau_k(u, y)=u \frac{\tau_{k+1}(u,y)}{\tau_{k}(u,y)} \,.
\] 
And let $u_{n,n(\pi)}$ be the positive square  root of $u_{n,n(\pi)}^2$, thus $ u_{n,n(\pi)} \rightarrow \infty $ as $n\rightarrow \infty$. Then, we have the following inequalities,  
\begin{align}
 k-C_k(y) &\geq  \int_0^{\infty}u \frac{\tau_{k+1}(u,y)}{\tau_{k}(u,y)} f_{U_n|\bX}(u|\bX) du
=\frac{\int_0^{\infty}\tilde \tau_k(u, y) g_n(u)du}{\int_0^{\infty}g_n(u)du}\nonumber\\
&\geq \frac{\int_{u_{n,n(\pi)}}^{\infty}\tilde \tau_k(u,y)g_n(u)du}{\int_0^{\infty}g_n(u)du} \geq \tilde \tau_k(u_{n, n(\pi)},  y)\frac{\int_{u_{n,n(\pi)}}^{\infty}g_n(u)du}{\int_0^{\infty}g_n(u)du}\nonumber\\
&=\tilde \tau_k(u_{n, n(\pi)}, y)\left( 1+\frac{\int_0^{u_{n,n(\pi)}}g_n(u)du}{\int_{u_{n,n(\pi)}}^{\infty}g_n(u)du}\right	)^{-1}\nonumber\\
&\geq \tilde \tau_k(u_{n, n(\pi)}, y)\left(1+\frac{\int_0^{u_{n,n(\pi)}}g_n(u)du}{\int_{u_{n,n(\pi)}}^{u_{n,n(\pi)}^2}g_n(u)du}\right)^{-1}\nonumber\\
&\geq \tilde \tau_k(u_{n, n(\pi)}, y)\left(1+\frac{\int_0^{u_{n,n(\pi)}}g_n(u_{n,n(\pi)})du}{\int_{u_{n,n(\pi)}}^{u_{n,n(\pi)}^2}g_n(u_{n,n(\pi)})du}\right)^{-1}\nonumber\\
&=\tilde \tau_k(u_{n, n(\pi)}, y)\left( 1+\frac{u_{n,n(\pi)}g_n(u_{n,n(\pi)})}{(u_{n,n(\pi)}^2-u_{n,n(\pi)})g_n(u_{n,n(\pi)})}\right)^{-1}\nonumber\\
&=(u_{n,n(\pi)}-1) \frac{\tau_{k+1}(u_{n,n(\pi)},y)}{\tau_{k}(u_{n,n(\pi)},y)} \overset{n\rightarrow \infty}{\rightarrow} k-C_k(y)\,.\label{eq: ratio of tau}
\end{align}
The last limit in \cref{eq: ratio of tau} is due to the following form 
\begin{align*}
\lim_{n\rightarrow \infty} (u_{n,n(\pi)}-1) \frac{\tau_{k+1}(u_{n,n(\pi)},y)}{\tau_{k}(u_{n,n(\pi)},y)}=\lim_{n\rightarrow \infty} \frac{(u_{n,n(\pi)}-1)}{u_{n,n(\pi)}} \tilde \tau_k(u_{n, n(\pi)}, y)\,,
\end{align*}
and $\lim_{n\rightarrow \infty}\frac{(u_{n,n(\pi)}-1)}{u_{n,n(\pi)}}=1$, $\lim_{n\rightarrow \infty} \tilde \tau_k(u_{n, n(\pi)}, y) =\lim_{u\rightarrow \infty} \tilde \tau_k(u, y)=k-C_k(y)$ by \cref{condition A}.
This completes the proof of the lemma. 
\end{proof}

Now we are ready to give the  proof of Theorem \ref{consistency thm}. To emphasise the finiteness of $\alpha$, we use the  notation that $\alpha=aH$, where $a=\alpha(\mathbb{X})$ is finite and $H$ is some probability measure.
 
We would   follow the similar     idea  as  that in \citep{freedman1983inconsistent} to  define a class of semi-norms on $\mathbb{M}_{\mathbb{X}}$ such that convergence under such norms implies weak convergence. Let $\mathcal{A}=\{A_i\}_{i=1}^{\infty}$ be a measurable partition of $\mathbb{X}$. The   semi-norm between two probability measures $P_1$ and $P_2$ in $\mathbb{M}_{\mathbb{X}}$ with respect to the partition $\mathcal{A}$ is defined by 
\begin{align}
|P_1-P_2|_{\mathcal{A}}=\sqrt{\sum_{i=1}^{\infty}[P_1(A_i)-P_2(A_i)]^2}\,. \label{semi-norm}
\end{align}
In order to show the posterior distribution of NRMI concentrates around its posterior mean, we have the following lemma.
\begin{lemma}\label{lemma: variance}
For any given measurable partition $\mathcal{A}$, 
\begin{align}
\E\left[ |P-\E[P| \bX]|_{\mathcal{A}}^2| \bX\right]=\sum_{i=1}^{\infty} \Var [P(A_i)|\bX] \rightarrow 0\,,\label{proof2.1}
\end{align}
a.s.-$P_0^{\infty}$ as $n\rightarrow \infty$. 
\end{lemma}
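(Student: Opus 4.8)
The plan is to first dispose of the stated identity and then reduce everything to the behaviour of the first two posterior moments. The equality $\E[|P-\E[P|\bX]|_{\cA}^2|\bX]=\sum_{i=1}^{\infty}\Var[P(A_i)|\bX]$ is immediate: by the definition \eqref{semi-norm} of the semi-norm, expanding the square gives $\sum_i (P(A_i)-\E[P(A_i)|\bX])^2$, and since every summand is nonnegative I may interchange the posterior expectation with the infinite sum by the conditional Tonelli theorem, each term becoming $\Var[P(A_i)|\bX]$. Hence the whole content is to prove $\sum_{i=1}^{\infty}\Var[P(A_i)|\bX]\to 0$ a.s.-$P_0^{\infty}$.

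For this I would write $\Var[P(A_i)|\bX]=\E[P(A_i)^2|\bX]-\E[P(A_i)|\bX]^2$ and insert the explicit first and second posterior moments supplied by Lemma \ref{them. moment} (the cases $m=1$ and $m=2$). Using $V^{(0)}_{\alpha(A)}=1$, $V^{(1)}_{\alpha(A)}(u)=\xi_1(u)=\int_A \tau_1(u,x)\alpha(dx)$ and the recursion $V^{(2)}_{\alpha(A)}=\xi_2+\xi_1^2$, together with the abbreviation $\tilde\tau_k(u,y)=u\tau_{k+1}(u,y)/\tau_k(u,y)$, both moments become integrals against $f_{U_n|\bX}$ of expressions built from $\xi_1,\xi_2$ and the ratios $\tilde\tau_{n_j}$ over the clusters $\{j:Y_j\in A_i\}$, carrying the prefactors $1/n$ and $1/[n(n+1)]$ respectively.

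The next step is to sum over the partition. Because $\{A_i\}$ is a partition of $\mathbb{X}$, the block sums telescope: $\sum_i \int_{A_i}\tau_k(u,x)\alpha(dx)=\int_{\mathbb{X}}\tau_k(u,x)\alpha(dx)$ and $\sum_i \sum_{j:Y_j\in A_i}(\cdots)=\sum_{j=1}^{n(\pi)}(\cdots)$, so $\sum_i \E[P(A_i)^2|\bX]$ collapses to a single integral against $f_{U_n|\bX}$. I would then invoke Lemma \ref{lemma1} and the concentration of the latent variable established in its proof --- namely that the mode $u_{n,n(\pi)}\to\infty$ and $f_{U_n|\bX}$ concentrates there, so that $\int \tilde\tau_{n_j}(u,Y_j)f_{U_n|\bX}(u|\bX)\,du\to n_j-C_{n_j}(Y_j)$ --- to identify the limits of both $\sum_i \E[P(A_i)^2|\bX]$ and $\sum_i \E[P(A_i)|\bX]^2$. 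The leading-order contributions (of size $O(1)$ after the $1/n$ and $1/[n(n+1)]$ prefactors act on the $\int u^m f_{U_n|\bX}$ factors) are precisely the squared limiting means $\sum_i P_0(A_i)^2$ (in the discrete consistent case; the continuous case carries the $\bar C_1$ correction of Theorem \ref{consistency thm}), and these cancel between the second moment and the square of the first moment, leaving a remainder of order $1/n$.

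The main obstacle is twofold and lies entirely in this last step. First, the cancellation of the $O(1)$ terms must be made exact rather than merely asymptotic, which forces one to track the $n(n+1)$ versus $n^2$ discrepancy and the cross terms $\sum_{j_1\ne j_2}\tilde\tau_{n_{j_1}}\tilde\tau_{n_{j_2}}$ carefully; I expect the surviving remainder to be $O(1/n)$ uniformly, using $\sum_j n_j=n$ and, when $P_0$ is discrete, $n(\pi)/n\to 0$ a.s. by Proposition \ref{remark: ratio}. Second, because $\cA$ is an \emph{infinite} partition I cannot interchange $\lim_n$ with $\sum_i$ term by term; instead I would keep the sum intact throughout, exploiting $\sum_i P(A_i)=1$ and $\sum_i P(A_i)^2\le 1$ to dominate the series, so that the single collapsed integral is controlled uniformly in the partition. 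Verifying this uniform $O(1/n)$ bound, valid for almost every realization of $\bX$, is where the real work of the lemma concentrates; once it is in hand the conclusion $\sum_i \Var[P(A_i)|\bX]\to 0$ a.s.-$P_0^{\infty}$ follows at once.
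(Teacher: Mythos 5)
Your outline follows the same route as the paper: insert the $m=1,2$ cases of Lemma \ref{them. moment}, use Lemma \ref{lemma1} (concentration of the latent variable at a mode $u_{n,n(\pi)}\to\infty$) to evaluate the posterior integrals of the ratios $u\tau_{n_j+1}/\tau_{n_j}$, and show that after cancellation the variance sum is $O(1/n)$. However, there are two genuine gaps. The first is that your claimed collapse of $\sum_i \E[P(A_i)^2|\bX]$ ``to a single integral'' by telescoping is false. Linear functionals telescope, $\sum_i\int_{A_i}\tau_k(u,x)\alpha(dx)=\int_{\mathbb{X}}\tau_k(u,x)\alpha(dx)$, but the second moment contains terms that are \emph{quadratic} in the block $A_i$: the square $\bigl(\int_{A_i}\tau_1(u,x)H(dx)\bigr)^2$ coming from $V^{(2)}_{\alpha(A_i)}=\xi_2+\xi_1^2$, the products $\delta_{Y_j}(A_i)\delta_{Y_k}(A_i)$ with $j\neq k$, and the mixed terms $\delta_{Y_j}(A_i)\int_{A_i}\tau_1(u,x)H(dx)$. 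Summing these over $i$ produces partition-dependent quantities (e.g.\ indicators that $Y_j$ and $Y_k$ fall in the same block), not full-space integrals. The paper keeps precisely these four families separate (the terms $\mathcal{J}_1,\mathcal{J}_2,\mathcal{J}_3,\mathcal{J}_4$ in its proof) and estimates each family on its own.

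The second, and more serious, gap is that the tools you cite cannot identify the leading-order quantities you intend to cancel. Assumption \ref{condition A} and Lemma \ref{lemma1} control only the ratios $u\tau_{k+1}(u,x)/\tau_k(u,x)$ for $k\ge 1$; they say nothing directly about the prior-part integrals $\frac{a}{n}\int_0^\infty u f_{U_n|\bX}(u|\bX)\int_{\mathbb{X}}\tau_1(u,x)H(dx)\,du$ and $\frac{a^2}{n(n+1)}\int_0^\infty u^2 f_{U_n|\bX}(u|\bX)\bigl(\int_{\mathbb{X}}\tau_1(u,x)H(dx)\bigr)^2du$, which appear in the first and second posterior moments and whose size must be pinned down before any cancellation can be exhibited. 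The paper's key device --- absent from your proposal --- is to exploit the exact normalization identities $\E[P(\mathbb{X})|\bX]=1$ and $\E[P(\mathbb{X})^2|\bX]=1$: the first yields the asymptotics \eqref{appro.1} for the $\tau_1$ term (it behaves like $\sum_j C_{n_j}(Y_j)/n$, together with the tail-localization statement \eqref{appro.1.1}), and the second yields \eqref{appro.2}, which solves for the otherwise intractable $\tau_2$ and $(\tau_1)^2$ integrals in terms of quantities already controlled by Lemma \ref{lemma1}. Only after substituting these identities do the $O(1)$ parts cancel exactly and each $\mathcal{J}_i$ come out as $O(1/n)$. Since your proposal supplies no mechanism for evaluating these prior-part integrals and explicitly defers the cancellation step (``where the real work of the lemma concentrates''), what you have is a plan whose central estimate --- the actual content of the lemma --- is missing.
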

\begin{proof}
To prove this  claim, we shall evaluate  the first and second posterior moments of $P$ for any $A \in \mathcal{X}$.  For the first moment we have 
\begin{align}
&\E[\tP (A)| \bX]=\frac{1}{n}\int_0^{\infty} u f_{U_n}(u) V_{\alpha(A)}^{(1)}(u)du +\frac{1}{n}\sum_{j=1}^{n(\pi)}\int_0^{\infty} u f_{U_n|\bX}(u|\bX) \frac{\tau_{n_j+1}(u,Y_j)}{\tau_{n_j}(u,Y_j)}\delta_{Y_j}(A)du\nonumber\\
&=\frac{{a}}{n}\int_0^{\infty} u f_{U_n}(u) \int_A \tau_1(u,x)H(dx) du +\frac{1}{n}\sum_{j=1}^{n(\pi)}\int_0^{\infty}u f_{U_n|\bX}(u|\bX) \frac{\tau_{n_j+1}(u,Y_j)}{\tau_{n_j}(u,Y_j)}\delta_{Y_j}(A)du\,.\nonumber
\end{align}
For the second moment we have  
\begin{align}
 \E[\tP (A)^2| \bX]=&\frac{a}{n(n+1)}\int_0^{\infty} u^2 f_{U_n|\bX}(u|\bX) \int_A \tau_2(u,x)H(dx) du  \nonumber\\
&+\frac{a^2}{n(n+1)}\int_0^{\infty} u^2 f_{U_n|\bX}(u|\bX) \left(\int_A \tau_1(u,x)H(dx)\right)^2 du \nonumber\\
&+2\frac{a}{n(n+1)}\sum_{j=1}^{n(\pi)}\int_0^{\infty} u^2 f_{U_n|\bX}(u|\bX) \frac{\tau_{n_j+1}(u,Y_j)}{\tau_{n_j}(u,Y_j)}\delta_{Y_j}(A)\int_A \tau_1(u,x)H(dx)du \nonumber\\
&+\frac{1}{n(n+1)}\sum_{j=1}^{n(\pi)}\int_0^{\infty} u^2 f_{U_n|\bX}(u|\bX) \frac{\tau_{n_j+2}(u,Y_j)}{\tau_{n_j}(u,Y_j)}\delta_{Y_j}(A)du \nonumber\\
&+2\frac{1}{n(n+1)}\sum_{j\neq k}^{n(\pi)}\int_0^{\infty} u^2 f_{U_n|\bX}(u|\bX) \frac{\tau_{n_k+1}(u,Y_k)}{\tau_{n_k}(u,Y_k)}\frac{\tau_{n_j+1}(u,Y_j)}{\tau_{n_j}(u,Y_j)}\delta_{Y_i}(A)\delta_{Y_j}(A)du\,.\nonumber
\end{align}
Then, we can write 
\[
\sum_{i=1}^{\infty} \Var [P(A_i)|\bX]=\sum_{i=1}^{\infty}\left(\E [\tP (A)^2| \bX]-\E[\tP (A)| \bX]^2\right)=\mathcal{J}_1+\mathcal{J}_2+\mathcal{J}_3+\mathcal{J}_4\,,
\]
 where the terms $\mathcal{J}_1$, $\mathcal{J}_2$, $\mathcal{J}_3$, $\mathcal{J}_4$ are defined as follows.
\begin{align}
 \mathcal{J}_1=&\frac{a}{n(n+1)}\int_0^{\infty} u^2 f_{U_n|\bX}(u|\bX) \int_{\mathbb{X}} \tau_2(u,x)H(dx) du \nonumber\\
&+\frac{a^2}{n(n+1)}\sum_{i=1}^{\infty}\int_0^{\infty} u^2 f_{U_n|\bX}(u|\bX) \left(\int_{A_i} \tau_1(u,x)H(dx)\right)^2 du \nonumber\\
&-\frac{a^2}{n^2}\sum_{i=1}^{\infty}\left(\int_0^{\infty} u f_{U_n}(u) \int_{A_i} \tau_1(u,x)H(dx) du\right)^2\,;\label{proof2.J1}
\end{align}
\begin{align}
 \mathcal{J}_2=&2\frac{a}{n(n+1)}\sum_{i=1}^{\infty}\sum_{j=1}^{n(\pi)}\int_0^{\infty} u^2 f_{U_n|\bX}(u|\bX) \frac{\tau_{n_j+1}(u,Y_j)}{\tau_{n_j}(u,Y_j)}  \delta_{Y_j}(A_i)\nonumber\\
&\qquad\qquad \times\int_{A_i} \tau_1(u,x)H(dx)du  -2\frac{a}{n^2}\sum_{i=1}^{\infty}\sum_{j=1}^{n(\pi)}\int_0^{\infty} u f_{U_n}(u) \int_{A_i} \tau_1(u,x)H(dx) du\nonumber\\
&\qquad\qquad\times \int_0^{\infty} u f_{U_n|\bX}(u|\bX) \frac{\tau_{n_j+1}(u,Y_j)}{\tau_{n_j}(u,Y_j)}\delta_{Y_j}(A_i)du\,;\label{proof2.J2}
\end{align}
\begin{align}
 \mathcal{J}_3=&\frac{1}{n(n+1)}\sum_{i=1}^{\infty}\sum_{j=1}^{n(\pi)}\int_0^{\infty} u^2 f_{U_n|\bX}(u|\bX) \frac{\tau_{n_j+2}(u,Y_j)}{\tau_{n_j}(u,Y_j)}\delta_{Y_j}(A_i)du \nonumber\\
&\qquad\qquad-\frac{1}{n^2}\sum_{i=1}^{\infty}\sum_{j=1}^{n(\pi)}\left(\int_0^{\infty} uf_{U_n|\bX}(u|\bX) \frac{\tau_{n_j+1}(u,Y_j)}{\tau_{n_j}(u,Y_j)}\delta_{Y_j}(A_i)du\right)^2\\
&=\frac{1}{n(n+1)}\sum_{j=1}^{n(\pi)}\int_0^{\infty} u^2 f_{U_n|\bX}(u|\bX) \frac{\tau_{n_j+2}(u,Y_j)}{\tau_{n_j}(u,Y_j)}du \nonumber\\
&\qquad\qquad-\frac{1}{n^2}\sum_{j=1}^{n(\pi)}\left(\int_0^{\infty} uf_{U_n|\bX}(u|\bX) \frac{\tau_{n_j+1}(u,Y_j)}{\tau_{n_j}(u,Y_j)}du\right)^2\,; \label{proof2.J3}
\end{align}
and 
\begin{align}
 \mathcal{J}_4=&2\frac{1}{n(n+1)}\sum_{i=1}^{\infty}\sum_{j\neq k}^{n(\pi)}\int_0^{\infty} u^2 f_{U_n|\bX}(u|\bX) \frac{\tau_{n_k+1}(u,Y_k)}{\tau_{n_k}(u,Y_k)}\frac{\tau_{n_j+1}(u,Y_j)}{\tau_{n_j}(u,Y_j)}\delta_{Y_k}(A_i)\delta_{Y_j}(A_i)du \nonumber\\
&\qquad\qquad-2\frac{1}{n^2}\sum_{i=1}^{\infty}\sum_{j\neq k}^{n(\pi)}\int_0^{\infty} u f_{U_n|\bX}(u|\bX) \frac{\tau_{n_k+1}(u,Y_k)}{\tau_{n_k}(u,Y_k)}\delta_{Y_k}(A_i)du \nonumber\\
& \qquad\qquad \times\int_0^{\infty} u f_{U_n|\bX}(u|\bX)\frac{\tau_{n_j+1}(u,Y_j)}{\tau_{n_j}(u,Y_j)}\delta_{Y_j}(A_i)du\,.\label{proof2.J4}
\end{align}
We will first consider the terms $\mathcal{J}_2$, $\mathcal{J}_3$, $\mathcal{J}_4$   and    then $\mathcal{J}_1$. But before dealing with them, we need  some prior preparations.    By the identity $\E[\tP (\mathbb{X})| \bX]=1$ we have 
\begin{align}
&\frac{a}{n}\int_0^{\infty} u f_{U_n}(u) \int_{\mathbb{X}} \tau_1(u,x)H(dx) du +\frac{1}{n}\sum_{j=1}^{n(\pi)}\int_0^{\infty}u f_{U_n|\bX}(u|\bX) \frac{\tau_{n_j+1}(u,Y_j)}{\tau_{n_j}(u,Y_j)}du=1\,.\nonumber
\end{align}
By Lemma \ref{lemma1}, we have the approximation 
\begin{align}
\frac{a}{n}\int_0^{\infty} u f_{U_n|\bX}(u) \int_{\mathbb{X}} \tau_1(u,x)H(dx) du \overset{n}{\sim} \frac{\sum_{j=1}^{n(\pi)} C_{n_j}(Y_j)}{n}\label{appro.1}
\end{align}
 as $n$ is large.
On the other hand,  let $u_{n,n(\pi)}$ be the maximal point of $g_n(u)$ as in Lemma \ref{lemma1}. Under the assumption \ref{condition A}, we know that $u\tau_1(u,x)$ is nondecreasing in $u$ for all $x$.  We have 
\begin{align}
&a\int_0^{u_{n,n(\pi)}} u f_{U_n|\bX}(u) \int_{\mathbb{X}} \tau_1(u,x)H(dx) du\\
&\qquad\qquad = a\frac{\int_0^{u_{n,n(\pi)}} u g_n(u) \int_{\mathbb{X}} \tau_1(u,x)H(dx) du}{\int_0^{\infty}g_n(u)du}\nonumber\\
&\qquad\qquad \leq au_{n,n(\pi)}\int_{\mathbb{X}} \tau_1(u_{n,n(\pi)},x)H(dx) \frac{\int_0^{u_{n,n(\pi)}} g_n(u)du}{\int_0^{\infty}g_n(u)du}\nonumber\\
&\qquad\qquad =au_{n,n(\pi)}\int_{\mathbb{X}} \tau_1(u_{n,n(\pi)},x)H(dx)\left(1+\frac{\int_{u_{n,n(\pi)}}^{\infty}g_n(u)du}{\int_0^{u_{n,n(\pi)}} g_n(u)du} \right)^{-1}\nonumber\\
&\qquad\qquad \leq au_{n,n(\pi)}\int_{\mathbb{X}} \tau_1(u_{n,n(\pi)},x)H(dx)\left(1+\frac{\int_{u_{n,n(\pi)}}^{u_{n,n(\pi)}^2}g_n(u_{n,n(\pi)})du}{\int_0^{u_{n,n(\pi)}} g_n(u_{n,n(\pi)})du} \right)^{-1}\nonumber\\
&\qquad\qquad = au_{n,n(\pi)}\int_{\mathbb{X}} \tau_1(u_{n,n(\pi)},x)H(dx)\left(1+\frac{u_{n,n(\pi)}(u_{n,n(\pi)} - 1)g_n(u_{n,n(\pi)})}{u_{n,n(\pi)}g_n(u_{n,n(\pi)})} \right)^{-1}\nonumber\\
&\qquad\qquad =a\int_{\mathbb{X}} \tau_1(u_{n,n(\pi)},x)H(dx)\,,
\end{align}
which goes to $0$ as $n \rightarrow \infty$ by the Monotone convergence theorem, since $\tau_1(u,x)$ is decreasing to $0$ in $u$ for all $x$. Combining  the above computation with the approximation \eqref{appro.1}, we have 
\begin{align}
\lim_{n\rightarrow \infty}\frac{a}{\sum_{j=1}^{n(\pi)} C_{n_j}(Y_j)}\int_{u_{n,n(\pi)}}^{\infty} u f_{U_n|\bX}(u) \int_{\mathbb{X}} \tau_1(u,x)H(dx) du=1 \,.
\label{appro.1.1}
\end{align}
\textbf{Step 1: Evaluation of $\mathcal{J}_2$.}

Notice first that for any $A_i$ and $Y_j$, by the assumption \ref{condition A}, we will have
\begin{align}
I_1:=&\int_0^{\infty} u^2 \frac{\tau_{n_j+1}(u,Y_j)}{\tau_{n_j}(u,Y_j)}\delta_{Y_j}(A_i)\int_{A_i} \tau_1(u,x)H(dx)f_{U_n|\bX}(u|\bX) du\nonumber \\
&\leq \left(n_j-C_{n_j}(Y_j)\right)\delta_{Y_j}(A_i)\int_0^{\infty}  uf_{U_n|\bX}(u|\bX) \int_{A_i} \tau_1(u,x)H(dx)du\,.\label{appro.1.2}
\end{align}
On the other hand, 
\begin{align}
I_1&\geq \int_{u_{n,n(\pi)}}^{\infty} u^2 \frac{\tau_{n_j+1}(u,Y_j)}{\tau_{n_j}(u,Y_j)}\delta_{Y_j}(A_i)\int_{A_i} \tau_1(u,x)H(dx)f_{U_n|\bX}(u|\bX) du\nonumber \\
&\geq u_{n,n(\pi)}\frac{\tau_{n_j+1}(u_{n,n(\pi)},Y_j)}{\tau_{n_j}(u_{n,n(\pi)},Y_j)}\delta_{Y_j}(A_i)\int_{u_{n,n(\pi)}}^{\infty} u f_{U_n|\bX}(u) \int_{\mathbb{X}} \tau_1(u,x)H(dx) du\,.\label{appro.1.3}
\end{align}
By the above inequalities \eqref{appro.1.2}, \eqref{appro.1.3} and the approximation \eqref{appro.1}, \eqref{appro.1.1}, we can see as $n$ becomes large
\begin{align}
&\int_0^{\infty} u^2 f_{U_n|\bX}(u|\bX) \frac{\tau_{n_j+1}(u,Y_j)}{\tau_{n_j}(u,Y_j)}\delta_{Y_j}(A_i)\int_{A_i} \tau_1(u,x)H(dx)du \nonumber\\
& \overset{n}{\sim}\int_0^{\infty} u f_{U_n}(u) \int_{A_i} \tau_1(u,x)H(dx) du\int_0^{\infty} u f_{U_n|\bX}(u|\bX) \frac{\tau_{n_j+1}(u,Y_j)}{\tau_{n_j}(u,Y_j)}\delta_{Y_j}(A_i)du\,.\nonumber
\end{align}
Thus, for large $n$, we have
\begin{align}
&\mathcal{J}_2  \overset{n}{\sim} 2\left(\frac{a}{n(n+1)}-\frac{a}{n^2}\right)\sum_{i=1}^{\infty}\sum_{j=1}^{n(\pi)}\left(n_j-C_{n_j}(Y_j)\right)\delta_{Y_j}(A_i) \nonumber\\
&\quad\qquad\qquad  \times\int_0^{\infty} uf_{U_n|\bX}(u|\bX) \int_{A_i} \tau_1(u,x)H(dx)du\nonumber\\
&  \overset{n}{\sim}  2\left(\frac{a}{n(n+1)}-\frac{a}{n^2}\right)\sum_{j=1}^{n(\pi)}\left(n_j-C_{n_j}(Y_j)\right)\int_0^{\infty} uf_{U_n|\bX}(u|\bX) \int_{\mathbb{X}} \tau_1(u,x)H(dx)du\,.\nonumber
\end{align}
This combined with  \eqref{appro.1} yields 
\begin{align}
&\mathcal{J}_2  \overset{n}{\sim}  -2\frac{\left(n-\sum_{j=1}^{n(\pi)}C_{n_j}(Y_j)\right)\left(\sum_{j=1}^{n(\pi)} C_{n_j}(Y_j)\right)}{n^2(n+1)}\,,
\end{align}
which has order   $O(\frac{1}{n})$.

\textbf{Step 2: Evaluation of $\mathcal{J}_3$.}

For $\mathcal{J}_3$,   notice that under the assumption \ref{condition A}, we have 
\[
u^2\frac{\tau_{n_j+2}(u,Y_j)}{\tau_{n_j}(u,Y_j)}=u\frac{\tau_{n_j+2}(u,Y_j)}{\tau_{n_j+1}(u,Y_j)}\times u\frac{\tau_{n_j+1}(u,Y_j)}{\tau_{n_j}(u,Y_j)}
\]
  is nondecreasing in $u$ and is bounded by $(n_j+1-C_{n_j+1}(Y_j))(n_j-C_{n_j}(Y_j))$.   Using a similar approach as that in Lemma \ref{lemma1}, we have as $n$ is large,
\begin{align}
\int_0^{\infty} u^2 f_{U_n|\bX}(u|\bX) \frac{\tau_{n_j+2}(u,Y_j)}{\tau_{n_j}(u,Y_j)}du \overset{n}{\sim} (n_j+1-C_{n_j+1}(Y_j))(n_j-C_{n_j}(Y_j))\,.
\end{align}
Combining it with   Lemma \ref{lemma1}, we have as $n$ becomes large
\begin{align}
 \mathcal{J}_3 \overset{n}{\sim}& \frac{1}{n(n+1)}\sum_{j=1}^{n(\pi)} (n_j+1-C_{n_j+1}(Y_j))(n_j-C_{n_j}(Y_j)) -\frac{1}{n^2}\sum_{j=1}^{n(\pi)} (n_j-C_{n_j}(Y_j))^2\nonumber\\
  = &\frac{1}{n^2(n+1)}\sum_{j=1}^{n(\pi)} (n_j-C_{n_j}(Y_j))\left(n+(n+1)C_{n_j}-n_j-nC_{n_j+1}\right)\nonumber\\
  \leq & 2\frac{n-\left(\sum_{j=1}^{n(\pi)} C_{n_j}(Y_j)\right)}{n(n+1)}\,,
\end{align}
which has order at most $O(\frac{1}{n})$.

\textbf{Step 3: Evaluation of $\mathcal{J}_4$.}

For $\mathcal{J}_4$, we have  that  under the assumption \ref{condition A},   $u^2\frac{\tau_{n_k+1}(u,Y_k)}{\tau_{n_k}(u,Y_k)}\frac{\tau_{n_j+1}(u,Y_j)}{\tau_{n_j}(u,Y_j)}$ is nondecreasing in $u$ and is bounded by $(n_k-C_{n_k}(Y_k))(n_j-C_{n_j}(Y_j))$. Using a  similar argument to that  in Lemma \ref{lemma1}   leads  to
\begin{align}
&\int_0^{\infty} u^2 f_{U_n|\bX}(u|\bX) \frac{\tau_{n_k+1}(u,Y_k)}{\tau_{n_k}(u,Y_k)}\frac{\tau_{n_j+1}(u,Y_j)}{\tau_{n_j}(u,Y_j)}du \nonumber\\
&\qquad\qquad \overset{n}{\sim}  \int_0^{\infty} u f_{U_n|\bX}(u|\bX) \frac{\tau_{n_k+1}(u,Y_k)}{\tau_{n_k}(u,Y_k)}du\int_0^{\infty} u f_{U_n|\bX}(u|\bX)\frac{\tau_{n_j+1}(u,Y_j)}{\tau_{n_j}(u,Y_j)}du \nonumber\\
&\qquad\qquad\overset{n}{\sim}  (n_k-C_{n_k}(Y_k))(n_j-C_{n_j}(Y_j))\,.\nonumber
\end{align}
Thus
\begin{align}
&\mathcal{J}_4\overset{n}{\sim} 2\left(\frac{1}{n(n+1)}-\frac{1}{n^2}\right)\sum_{i=1}^{\infty}\sum_{j\neq k}^{n(\pi)}(n_k-C_{n_k}(Y_k))(n_j-C_{n_j}(Y_j))\delta_{Y_k}(A_i)\delta_{Y_j}(A_i)\nonumber\\
&\quad \overset{n}{\sim} -\frac{2\sum_{j\neq k}^{n(\pi)}(n_k-C_{n_k}(Y_k))(n_j-C_{n_j}(Y_j))}{n^2(n+1)}\,,\nonumber
\end{align}
which has an order at most $O(\frac{1}{n})$.

\textbf{Step 4: Evaluation of $\mathcal{J}_1$.}

Finally, we deal with  the term $\mathcal{J}_1$. Notice that $\E[\tP (\mathbb{X})^2| \bX]=1$.
Using the computation we obtained for  $\mathcal{J}_2$, $\mathcal{J}_3$, $\mathcal{J}_4$, we have 
\begin{align}
 1= \E[\tP (\mathbb{X})^2| \bX]=&\frac{a}{n(n+1)}\int_0^{\infty} u^2 f_{U_n|\bX}(u|\bX) \int_{\mathbb{X}} \tau_2(u,x)H(dx) du  \nonumber\\
&  +\frac{a^2}{n(n+1)}\int_0^{\infty} u^2 f_{U_n|\bX}(u|\bX) \left(\int_{\mathbb{X}} \tau_1(u,x)H(dx)\right)^2 du \nonumber\\
&+2\frac{a}{n(n+1)}\sum_{j=1}^{n(\pi)}\int_0^{\infty} u^2 f_{U_n|\bX}(u|\bX) \frac{\tau_{n_j+1}(u,Y_j)}{\tau_{n_j}(u,Y_j)}\int_{\mathbb{X}} \tau_1(u,x)H(dx)du \nonumber\\
&+\frac{1}{n(n+1)}\sum_{j=1}^{n(\pi)}\int_0^{\infty} u^2 f_{U_n|\bX}(u|\bX) \frac{\tau_{n_j+2}(u,Y_j)}{\tau_{n_j}(u,Y_j)}du \nonumber\\
&+2\frac{1}{n(n+1)}\sum_{j\neq k}^{n(\pi)}\int_0^{\infty} u^2 f_{U_n|\bX}(u|\bX) \frac{\tau_{n_k+1}(u,Y_k)}{\tau_{n_k}(u,Y_k)}\frac{\tau_{n_j+1}(u,Y_j)}{\tau_{n_j}(u,Y_j)}du\nonumber\\
 \overset{n}{\sim} &\frac{a}{n(n+1)}\int_0^{\infty} u^2 f_{U_n|\bX}(u|\bX) \int_{\mathbb{X}} \tau_2(u,x)H(dx) du  \nonumber\\
&+\frac{a^2}{n(n+1)}\int_0^{\infty} u^2 f_{U_n|\bX}(u|\bX) \left(\int_{\mathbb{X}} \tau_1(u,x)H(dx)\right)^2 du\nonumber\\
&+2\frac{\left(\sum_{j=1}^{n(\pi)} C_{n_j}(Y_j)\right)\left(n-\left(\sum_{j=1}^{n(\pi)} C_{n_j}(Y_j)\right)\right)}{n(n+1)}\nonumber\\
&+\frac{\sum_{j=1}^{n(\pi)}(n_j+1-C_{n_j+1}(Y_j)-\frac{n_j-C_{n_j}}{n})(n_j-C_{n_j}(Y_j))}{n(n+1)}\nonumber\\
&+2\frac{\sum_{j\neq k}^{n(\pi)}(n_k-C_{n_k}(Y_k))(n_j-C_{n_j}(Y_j))}{n(n+1)}\,.  \nonumber
\end{align}
This  implies 
\begin{align}
&\frac{a}{n(n+1)}\int_0^{\infty} u^2 f_{U_n|\bX}(u|\bX) \int_{\mathbb{X}} \tau_2(u,x)H(dx) du  \nonumber\\
&+\frac{a^2}{n(n+1)}\int_0^{\infty} u^2 f_{U_n|\bX}(u|\bX) \left(\int_{\mathbb{X}} \tau_1(u,x)H(dx)\right)^2 du   \nonumber\\
& \overset{n}{\sim} \frac{n+\left(\sum_{j=1}^{n(\pi)} C_{n_j}(Y_j)\right)^2-\sum_{j=1}^{n(\pi)}(n_j-C_{n_j}(Y_j))(1+C_{n_j}-C_{n_j+1}-\frac{n_j-C_{n_j}}{n})}{n(n+1)}\,.\label{appro.2}
\end{align}
Combining  the approximations \eqref{appro.1} and \eqref{appro.2}, we have 
\begin{align}
 \mathcal{J}_1=&\frac{a}{n(n+1)}\int_0^{\infty} u^2 f_{U_n|\bX}(u|\bX) \int_{\mathbb{X}} \tau_2(u,x)H(dx) du\nonumber\\
&+\frac{a^2}{n(n+1)}\int_0^{\infty} u^2 f_{U_n|\bX}(u|\bX) \left(\int_{\mathbb{X}} \tau_1(u,x)H(dx)\right)^2 du\nonumber\\
&-\frac{a^2}{n^2}\left(\int_0^{\infty} u f_{U_n}(u) \int_{\mathbb{X}} \tau_1(u,x)H(dx) du\right)^2\nonumber\\
&+2\sum_{i\neq l}^{\infty}\int_0^{\infty} u f_{U_n}(u) \int_{A_i} \tau_1(u,x)H(dx) du\int_0^{\infty} u f_{U_n}(u) \int_{A_l} \tau_1(u,x)H(dx) du\nonumber\\
&-2\sum_{i\neq l}^{\infty}\int_0^{\infty} u^2 f_{U_n|\bX}(u|\bX)\int_{A_i} \tau_1(u,x)H(dx)\int_{A_l} \tau_1(u,x)H(dx) du\nonumber\\
 \overset{n}{\sim}& \frac{n+\left(\sum_{j=1}^{n(\pi)} C_{n_j}(Y_j)\right)^2-\sum_{j=1}^{n(\pi)}(n_j-C_{n_j}(Y_j))(1+C_{n_j}-C_{n_j+1}-\frac{n_j-C_{n_j}}{n})}{n(n+1)}\nonumber\\
& -\frac{\left(\sum_{j=1}^{n(\pi)} C_{n_j}(Y_j)\right)^2}{n^2}\nonumber\\
&+2\frac{a^2}{n^2}\sum_{i\neq l}^{\infty}\int_0^{\infty} u f_{U_n}(u) \int_{A_i} \tau_1(u,x)H(dx) du\int_0^{\infty} u f_{U_n}(u) \int_{A_l} \tau_1(u,x)H(dx) du\nonumber\\
&-2\frac{a^2}{n(n+1)}\sum_{i\neq l}^{\infty}\int_0^{\infty} u^2 f_{U_n|\bX}(u|\bX)\int_{A_i} \tau_1(u,x)H(dx)\int_{A_l} \tau_1(u,x)H(dx) du\,.\label{proof2.3}
\end{align}
We now treat  the above last two  summation terms. First, we have  
\begin{align}
&2\sum_{i\neq l}^{\infty}\int_0^{\infty} u f_{U_n}(u) \int_{A_i} \tau_1(u,x)H(dx) du\int_0^{\infty} u f_{U_n}(u) \int_{A_l} \tau_1(u,x)H(dx) du\nonumber\\
&\overset{n}{\sim}\left(\int_0^{\infty} u f_{U_n}(u) \int_{\mathbb{X}} \tau_1(u,x)H(dx) du\right)^2\nonumber
\end{align}
and
\begin{align}
&2\sum_{i\neq l}^{\infty}\int_0^{\infty} u^2 f_{U_n|\bX}(u|\bX)\int_{A_i} \tau_1(u,x)H(dx)\int_{A_l} \tau_1(u,x)H(dx) du\nonumber\\
&\overset{n}{\sim} \int_0^{\infty} u^2 f_{U_n|\bX}(u|\bX) \left(\int_{\mathbb{X}} \tau_1(u,x)H(dx)\right)^2 du\,.\nonumber
\end{align}
Thus,    
\begin{align}
&\mathcal{J}_1 \overset{n}{\sim} \frac{n^2-n\sum_{j=1}^{n(\pi)}(n_j-C_{n_j}(Y_j))(1+C_{n_j}-C_{n_j+1})-\left(\sum_{j=1}^{n(\pi)} C_{n_j}(Y_j)\right)^2}{n^2(n+1)}\,.\nonumber
\end{align}
It is easy to have that 
\[
\sum_{j=1}^{n(\pi)}(n_j-C_{n_j}(Y_j))(1+C_{n_j}-C_{n_j+1}) \leq 3n\sum_{j=1}^{n(\pi)}(n_j-C_{n_j}(Y_j)) \leq 3n^2\,.
\]
Thus $\mathcal{J}_1$ has an order $O(\frac{1}{n})$.

Summarizing the above four steps for evaluating $\mathcal{J}_1$, $\mathcal{J}_2$, $\mathcal{J}_3$, $\mathcal{J}_4$, we have  \[
\sum_{i=1}^{\infty} \Var [P(A_i)|\bX] \overset{n}{\sim} O(\frac{1}{n}) \rightarrow 0
\qquad \hbox{ as $n \rightarrow \infty$}\,.
\]

\end{proof}

Now, we can give the completion of \cref{consistency thm}.
\begin{proof}

By \cref{lemma: variance}, the distribution of $P(\cdot |\bX)$ converges weakly to the point mass at the distribution of $\lim_{n \rightarrow \infty}\E[P(dx)|\bX]$.

If the ``true" distribution $P_0$ of $\bX$ is continuous, the posterior expectation has the following form for any $A \in \mathcal{X}$.
\begin{align}
 \E[P(A)|\bX]=&\frac{a}{n}\int_0^{\infty} u f_{U_n}(u) \int_A \tau_1(u,x)H(dx) du \nonumber\\
&\quad+\frac{1}{n}\sum_{j=1}^{n}\int_0^{\infty}u f_{U_n|\bX}(u|\bX) \frac{\tau_{2}(u,X_j)}{\tau_{1}(u,X_j)}\delta_{X_j}(A)du\,.\label{proof2.4}
\end{align}
As $n \rightarrow \infty$, by \eqref{eq: ratio of tau}, the weight   $\int_0^{\infty}u f_{U_n|\bX}(u|\bX) \frac{\tau_{2}(u,X_j)}{\tau_{1}(u,X_j)}du \overset{n}{\sim} 1-C_1(X_j)$, thus the second part of the summation in \eqref{proof2.4} has the form $\sum_{j=1}^n \frac{1-C_1(X_j)}{n} \delta_{X_j}(A)$ that converges uniformly over Glivenko-Cantelli classes to $(1-\bar{C_1})P_0$. Since the sum of the weights of $H(dx)$ and $\delta_{X_j}(dx)$ is equal to $1$, we have $\lim_{n \rightarrow \infty}\E[P(\cdot)|\bX]=\bar{C_1}H(\cdot)+(1-\bar{C_1})P_0(\cdot)$.

If the ``true" distribution $P_0$ of $\bX$ is discrete with $\lim_{n\rightarrow \infty}\frac{n(\pi)}{n}=0$ that is true a.s., the posterior expectation has the following form:
\begin{align}
&\E[P(A)|\bX]=\frac{a}{n}\int_0^{\infty} u f_{U_n}(u) \int_A \tau_1(u,x)H(dx) du\nonumber\\
&\quad +\frac{1}{n}\sum_{j=1}^{n(\pi)}\int_0^{\infty}u f_{U_n|\bX}(u|\bX) \frac{\tau_{n_j+1}(u,Y_j)}{\tau_{n_j}(u,Y_j)}\delta_{Y_j}(A)du\,.\label{proof2.5}
\end{align}
As $n \rightarrow \infty$, by \eqref{eq: ratio of tau},  $\int_0^{\infty}u f_{U_n|\bX}(u|\bX) \frac{\tau_{n_j+1}(u,Y_j)}{\tau_{n_j}(u,Y_j)} \overset{n}{\sim} n_j-C_{n_j}(Y_j)$. Hence,  the second part of the summation in \eqref{proof2.5} has the form $\sum_{j=1}^{n(\pi)} \frac{n_j-C_{n_j}(Y_j)}{n} \delta_{Y_j}(A)$. Notice that 
\begin{align*}
\sum_{j=1}^{n(\pi)} \frac{n_j-C_{n_j}(Y_j)}{n} \delta_{Y_j}(A)=\sum_{i=1}^n \frac{1}{n} \delta_{X_i}(A)-\sum_{j=1}^{n(\pi)} \frac{C_{n_j}(Y_j)}{n} \delta_{Y_j}(A)\,,
\end{align*}
where the term $\sum_{j=1}^{n(\pi)} \frac{C_{n_j}(Y_j)}{n} \leq \sum_{j=1}^{n(\pi)} \frac{1}{n}=\frac{n(\pi)}{n}$ converges to $0$. Thus the weight of $H(dx)$ is $0$ and we have $\lim_{n \rightarrow\infty }\E[P(\cdot)|\bX]=P_0(\cdot)$.
This completes the proof of Theorem 
\ref{consistency thm}. 
\end{proof}
\subsection*{Proof of \cref{thm: BVM}}\label{proof: BVM}

As the preparation of the proof of \cref{thm: BVM}, we shall present the posterior process of $\NGGP(a, \sigma, \theta, H)$  followed by \cref{posterior}.
\begin{lemma}\label{lemma: NGGP posterior}
If $P\sim \NGGP(a, \sigma, \theta, H)$,  conditionally on $\bX$ and a latent random variable $U_n$, $P$ coincides in distribution with the random probability measure
\begin{align}
\kappa_n P_{U_n} + (1-\kappa_n) \sum_{j=1}^{n(\pi)} D_{n,j}\delta_{Y_j}\,,\label{NGGP posterior distribution}
\end{align}
where 
\begin{enumerate}
\item[(i)] The random variable $U_n$ has density
\begin{align}
f_{U_n}(u) \varpropto \frac{u^{n-1}}{(u+\theta)^{n-n(\pi)\sigma}}e^{-\frac{a}{\sigma}(u+\theta)^{\sigma}}\,.  \label{NGGP density of U_n}
\end{align}
\item[(ii)] Given $U_n=u$,  $P_{U_n}\sim \NGGP(a, \sigma, \theta+u, H)$.
\item[(iii)]  $D_n:=(D_{n,1},\cdots, D_{n,n(\pi)}) \sim \Dir (n(\pi);n_1-\sigma, \cdots\, n_{n(\pi)}-\sigma)$ is independent of $\kappa_n$ and $P_{U_n}$.
\item[(iv)] The random elements $P_{U_n}$ and $J_j$, $j\in \{1, \cdots, n(\pi)\}$ are independent.   
\item[(v)]   $T_{(U_n)}=\tilde{\mu}_{(U_n)}(\mathbb{X})$
and  $\kappa_n=\frac{T_{(U_n)}}{T_{(U_n)}+\sum_{j=1}^{n(\pi)}J_j}$. 
\end{enumerate}
\end{lemma}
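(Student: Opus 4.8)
The plan is to specialize the general posterior characterization of Theorem \ref{posterior} to the homogeneous NGGP Lévy intensity $\nu(ds,dx)=\frac{1}{\Gamma(1-\sigma)}s^{-1-\sigma}e^{-\theta s}ds\,\alpha(dx)$ with $\alpha=aH$, and then reduce each of the five ingredients of \eqref{posterior distribution} to the stated closed form using the explicit quantities already computed in Example \ref{example: NGGP}. All of the structural content is inherited directly from Theorem \ref{posterior}; the work is the bookkeeping needed to collapse the general expressions, together with one genuinely structural observation in part (iii).

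First I would establish (i). By Theorem \ref{posterior}(vi) the conditional density of $U_n$ given $\bX$ is proportional to $u^{n-1}e^{-\psi(u)}\prod_{j=1}^{n(\pi)}\tau_{n_j}(u,Y_j)$. Example \ref{example: NGGP} gives $\tau_k(u)=\Gamma(k-\sigma)/[\Gamma(1-\sigma)(u+\theta)^{k-\sigma}]$, and the Laplace transform there yields $\psi(u)=\frac{a}{\sigma}[(u+\theta)^{\sigma}-\theta^{\sigma}]$. Multiplying the $\tau_{n_j}$ factors and using $\sum_{j}n_j=n$ over exactly $n(\pi)$ factors gives $\prod_{j}(u+\theta)^{-(n_j-\sigma)}=(u+\theta)^{-(n-n(\pi)\sigma)}$; absorbing all $u$-independent constants (including $e^{\frac{a}{\sigma}\theta^{\sigma}}$ and the Gamma factors) into the normalizing constant produces \eqref{NGGP density of U_n}. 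For (ii), Theorem \ref{posterior}(ii) says $\tilde{\mu}_{(U_n)}$ has intensity $e^{-U_ns}\rho(ds|x)\alpha(dx)=\frac{1}{\Gamma(1-\sigma)}s^{-1-\sigma}e^{-(\theta+U_n)s}ds\,\alpha(dx)$, which is exactly the NGGP intensity with $\theta$ replaced by $\theta+U_n$; normalizing gives $P_{U_n}\sim\NGGP(a,\sigma,\theta+U_n,H)$.

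The substantive step is (iii). From Theorem \ref{posterior}(iii), given $U_n=u$ the jump $J_j$ has density proportional to $s^{n_j}e^{-us}\rho(s|Y_j)=s^{n_j-\sigma-1}e^{-(u+\theta)s}$ (the normalizer being precisely $\tau_{n_j}(u,Y_j)$), so $J_j\sim\text{Gamma}(n_j-\sigma,\,u+\theta)$, independently across $j$ by Theorem \ref{posterior}(iv). I would then invoke the Gamma--Dirichlet relationship: independent Gamma variables with a \emph{common} rate $u+\theta$ and shapes $n_j-\sigma$ have a normalized vector $(J_j/\sum_k J_k)$ that is $\Dir(n_1-\sigma,\cdots,n_{n(\pi)}-\sigma)$ and is independent of the total mass $\sum_k J_k$. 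The key point, and the one I expect to require the most care, is the independence claimed in (iii): because the intensity is homogeneous, the jumps share the common rate $u+\theta$, so the Dirichlet parameters $n_j-\sigma$ carry no dependence on $u$; hence $D_n$ is independent of $U_n$, and, being a function of the $J_j$'s alone, it is independent of both $\sum_k J_k$ and $T_{(U_n)}=\tilde{\mu}_{(U_n)}(\mathbb{X})$. Consequently $D_n$ is independent of $\kappa_n=T_{(U_n)}/(T_{(U_n)}+\sum_kJ_k)$ and of $P_{U_n}$, which establishes (iii). Finally, (iv) and (v) are direct transcriptions of Theorem \ref{posterior}(iv)--(v), so no further argument is needed beyond identifying $(1-\kappa_n)\sum_j J_j\delta_{Y_j}/\sum_k J_k=(1-\kappa_n)\sum_j D_{n,j}\delta_{Y_j}$.
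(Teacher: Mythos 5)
Your proposal is correct and takes essentially the same route as the paper: both specialize Theorem \ref{posterior} to the NGGP intensity of Example \ref{example: NGGP}, and both reduce part (iii) to the Gamma--Dirichlet relationship for the independent $G(n_j-\sigma,\,u+\theta)$ jumps, concluding independence of $D_n$ from $(\kappa_n, P_{U_n})$ because the resulting Dirichlet law does not depend on $u$. Your write-up is simply more explicit in the bookkeeping for (i)--(ii) and in the independence chain for (iii), which the paper compresses into a citation of Proposition G.2 of Ghosal and van der Vaart (2017).
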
 
\begin{proof}
The lemma is an immediate consequence of  \cref{posterior} and the NGGP intensity given in \cref{example: NGGP}  except (iii), where we have a more specific form 
 for $D_n$. To verify (iii), we let  $D_{n,j}:=\frac{J_j}{\sum_{j=1}^{n(\pi)} J_j}$. Since $\{ J_1, \cdots, J_{n(\pi)}\}$  are independent $G(n_j-\sigma, u+\theta)$ random variables with density
\begin{align*}
f_{J_j}(t|U_n=u) = \frac{(u+\theta)^{n_j-\sigma}}{\Gamma(n_j-\sigma)}t^{n_j-\sigma-1}e^{-(u+\theta)t}\,,
\end{align*}
By the Proposition G.2 in \citep{ghosal2017fundamentals}, we have $$D_n:=(n(\pi);D_{n,1},\cdots, D_{n,n(\pi)}) \sim \Dir (n_1-\sigma, \cdots\, n_{n(\pi)}-\sigma),$$ which is totally independent of $U_n$, thus independent of $\kappa_n$ and $P_{U_n}$. To understand the independence, we can use the relationship between Dirichlet distribution and the gamma distribution from the Proposition G.2 in \citep{ghosal2017fundamentals}.
\end{proof}
The convergences \eqref{eq1: BVM dicrete} and \eqref{eq2: BVM dicrete} are equivalent in Theprem \ref{thm: BVM}, and also the convergences \eqref{eq1: BVM continuous} and \eqref{eq2: BVM continuous} are equivalent. These equivalences can be shown by the following lemma. To make the results lavish, we will assume $\{\sigma_i\}_{i=1}^n$ be a sequence such that $\lim_{n \rightarrow \infty} \sigma_n=\sigma \in [0,1)$ in the following proofs. It is worth to point that, we always assume that $\sigma_i<1$ and $\sigma<1$ to make sure all quantities in this work are well-defined. To be more precise, this assumption would make the forms $\int_0^{\infty} s^{n_j-\sigma_i-1}e^{-(u+\theta)s}ds <\infty$ and  $\int_0^{\infty} s^{n_j-\sigma-1}e^{-(u+\theta)s}ds <\infty$ for any integer $n_j\geq 1 $. 
\begin{lemma}\label{lemma: equivalent}
For any $P_0$, we have 
\begin{itemize}
\item[(i)] If $P_0$ is discrete, 
\begin{align}
\lim_{n\rightarrow \infty} \E[P|\bX]=\lim_{n\rightarrow \infty}\mathbb{P}_n+ \frac{\sigma_n n(\pi)}{n}(H-\tilde{\mathbb{P}}_n)=P_0\,.\label{eq: discrete case mean}
\end{align}
\item[(ii)] If $P_0$ is continuous, 
\begin{align}
\lim_{n\rightarrow \infty} \E[P|\bX]=\lim_{n\rightarrow \infty} (1-\sigma_n)\mathbb{P}_n+\sigma_n H = (1-\sigma_n)P_0+\sigma_n H\,.\label{eq: continuous case mean}
\end{align}
\end{itemize}
\end{lemma}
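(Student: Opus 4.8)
The plan is to obtain an \emph{exact} closed form for the posterior mean $\E[P|\bX]$ under $\NGGP(a,\sigma_n,\theta,H)$ and then read off both limits. First I would specialize the $m=1$ case of \cref{them. moment} to the NGGP. Using the explicit $\tau_k(u)=\frac{\Gamma(k-\sigma_n)}{\Gamma(1-\sigma_n)(u+\theta)^{k-\sigma_n}}$ from \cref{example: NGGP}, one has $\tau_1(u)=(u+\theta)^{\sigma_n-1}$ and $\frac{\tau_{n_j+1}(u)}{\tau_{n_j}(u)}=\frac{n_j-\sigma_n}{u+\theta}$, so the constants $C_{n_j}(Y_j)$ of \cref{condition A} are all equal to $\sigma_n$. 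Substituting into the posterior-mean formula used in the proof of \cref{consistency thm} gives, for every $A\in\mathcal{X}$,
\begin{align*}
\E[P(A)|\bX]=w_n^H\,H(A)+\frac{\beta_n}{n}\sum_{j=1}^{n(\pi)}(n_j-\sigma_n)\,\delta_{Y_j}(A),
\end{align*}
where $\beta_n:=\E\big[\tfrac{U_n}{U_n+\theta}\,\big|\,\bX\big]$ and $w_n^H:=\frac{a}{n}\E\big[\tfrac{U_n}{(U_n+\theta)^{1-\sigma_n}}\,\big|\,\bX\big]$.

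Second, to eliminate the awkward factor $w_n^H$ (which has no clean closed form) I would invoke the normalization $\E[P(\mathbb{X})|\bX]=1$. Since $\sum_{j}(n_j-\sigma_n)=n-\sigma_n n(\pi)$, this forces $w_n^H=1-\beta_n+\frac{\beta_n\sigma_n n(\pi)}{n}$. Inserting this and using $\frac1n\sum_j n_j\delta_{Y_j}=\mathbb{P}_n$ together with $\frac1n\sum_j\delta_{Y_j}=\frac{n(\pi)}{n}\tilde{\mathbb{P}}_n$ yields the exact identity
\begin{align*}
\E[P|\bX]=(1-\beta_n)\,H+\beta_n\,M_n,\qquad M_n:=\mathbb{P}_n+\frac{\sigma_n n(\pi)}{n}\big(H-\tilde{\mathbb{P}}_n\big),
\end{align*}
equivalently $\E[P|\bX]-M_n=(1-\beta_n)(H-M_n)$, where $M_n$ is exactly the centering in \eqref{eq1: BVM dicrete}, which reduces to $(1-\sigma_n)\mathbb{P}_n+\sigma_n H$ of \eqref{eq1: BVM continuous} when $n(\pi)=n$ and every $n_j=1$.

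Third, I would prove $\beta_n\to1$. Write $1-\beta_n=\E[\tfrac{\theta}{U_n+\theta}|\bX]$ and split at the divergent point $u_{n,n(\pi)}\to\infty$ from the proof of \cref{lemma1}: on $\{U_n\ge u_{n,n(\pi)}\}$ one has $\frac{\theta}{U_n+\theta}\le\frac{\theta}{u_{n,n(\pi)}+\theta}\to0$, while the concentration estimate established there gives $\Pr(U_n<u_{n,n(\pi)}\,|\,\bX)\to0$, so $1-\beta_n\to0$. Since $\sigma_n<1$ and $n(\pi)\le n$, the signed measure $H-M_n$ satisfies $|(H-M_n)(A)|\le2$ for all $A$, so the identity gives $\E[P|\bX]-M_n\to0$. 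It then remains to compute $\lim_n M_n$: when $P_0$ is discrete, \cref{remark: ratio} gives $n(\pi)/n\to0$ so the bias term vanishes and $\mathbb{P}_n\to P_0$ by Glivenko--Cantelli, proving (i); when $P_0$ is continuous, $n(\pi)=n$ and $\tilde{\mathbb{P}}_n=\mathbb{P}_n$, whence $M_n=(1-\sigma_n)\mathbb{P}_n+\sigma_n H\to(1-\sigma)P_0+\sigma H$, proving (ii).

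The main obstacle is the control of $\beta_n$. For the limit statements (i) and (ii) as literally written, $\beta_n\to1$ is enough, and it follows from the escape $U_n\to\infty$ already contained in \cref{lemma1}. However, for this lemma to deliver the advertised \emph{equivalence} of the $\sqrt n$-scaled convergences \eqref{eq1: BVM dicrete}--\eqref{eq2: BVM dicrete} and \eqref{eq1: BVM continuous}--\eqref{eq2: BVM continuous}, one needs the sharper rate $\sqrt n\,(1-\beta_n)\to0$, so that $\sqrt n\,(\E[P|\bX]-M_n)=\sqrt n\,(1-\beta_n)(H-M_n)\to0$. Establishing $1-\beta_n=o(n^{-1/2})$ demands a quantitative lower bound on the growth of $U_n$ rather than mere divergence, and this is the genuinely delicate point, where the explicit density \eqref{NGGP density of U_n} of $U_n$ must be exploited.
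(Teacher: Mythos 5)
Your proof is correct, and although it begins from the same exact formula as the paper---the $m=1$ case of \cref{them. moment} specialized to the NGGP intensity, i.e.\ \eqref{eq.mean of NGGP}---it resolves the limits by a genuinely different and cleaner mechanism. The paper attacks the two $U_n$-integrals head-on: it writes the $H$-weight as the ratio \eqref{eq2. part 1}, locates the maximizers of numerator and denominator, approximates each integral by its peak value (a Laplace-type step carried out separately, and somewhat heuristically, for the discrete and continuous cases, cf.\ \eqref{eq3: part 1}), and then asserts the analogous computation for the atomic weights with details omitted. You instead exploit the constraint $\E[P(\mathbb{X})|\bX]=1$ to obtain the exact identity $w_n^H=1-\beta_n+\beta_n\sigma_n n(\pi)/n$, so that the only analytic input needed is $\beta_n\to 1$; and you obtain that rigorously from material already inside the proof of \cref{lemma1}: the NGGP satisfies \cref{condition A} by \cref{example: NGGP}, hence $u_{n,n(\pi)}\to\infty$, and the chain of inequalities in \eqref{eq: ratio of tau} contains the bound $\Pr\lc U_n<u_{n,n(\pi)}\mid\bX\rc\le (u_{n,n(\pi)}-1)^{-1}$, whence $1-\beta_n=\E[\theta/(U_n+\theta)\mid\bX]\to 0$. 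Your route buys rigor and economy: the paper's integral asymptotics ($w_n^H\to 0$ in the discrete case, $w_n^H\to\sigma$ when $n(\pi)=n$) drop out as corollaries of $\beta_n\to1$ via the normalization identity, both cases are handled at once, and no ratio-of-maxima approximation is needed (your identity also sidesteps the garbled bookkeeping around \eqref{eq.part 2 of NGGP mean}, whose displayed limits cannot be read literally with the $1/n$ prefactor). What the paper's route buys, in principle, is the explicit location of the posterior mode of $U_n$, finer information that would be relevant for rates. Your closing caveat is apt, but it cuts against the paper as much as against you: neither proof establishes the $o(n^{-1/2})$ rate for $\E[P|\bX]-M_n$ that the equivalence of \eqref{eq1: BVM dicrete} and \eqref{eq2: BVM dicrete} actually requires; for the lemma as stated, $\beta_n\to1$ suffices and your proof is complete.
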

\begin{proof}
Since the convergence of $\sigma_n$ to $\sigma$ won't affect the proof, and $\sigma_n$ is well-defined as discussed previously, we may fix $\sigma_n$ and use $\sigma$ for the sake of notational simplicity in the proof.

By applying the NGGP intensity \eqref{NGGP intensity} to \cref{them. moment}, we have 
\begin{align}
\E[P|\bX]=\frac{a}{n}\int_0^{\infty} \frac{u}{(u+\theta)^{1-\sigma}}f_{U_n}(u)du H+\frac{1}{n}\sum_{j=1}^{n(\pi)}\int_0^{\infty} \frac{(n_j-\sigma)u}{u+\theta}f_{U_n}(u)du \delta_{Y_j}\,.\label{eq.mean of NGGP}
\end{align}
To evaluate $\lim_{n\rightarrow \infty}\E[P|\bX] $, we need to find the limits of 
\begin{align}
&\frac{a}{n}\int_0^{\infty} \frac{u}{(u+\theta)^{1-\sigma}}f_{U_n}(u)du\,,\label{eq.part 1 of NGGP mean}\\
& \frac{1}{n}\int_0^{\infty} \frac{u}{u+\theta}f_{U_n}(u)du\,.\label{eq.part 2 of NGGP mean}
\end{align}
We will find the limit of \eqref{eq.part 1 of NGGP mean} and then \eqref{eq.part 2 of NGGP mean}. For \eqref{eq.part 1 of NGGP mean} by the density of $U_n$, we have
\begin{align}
\frac{a}{n}\int_0^{\infty} \frac{u}{(u+\theta)^{1-\sigma}}f_{U_n}(u)du=\frac{\frac{a}{n} \int_0^{\infty} \frac{u^{n}}{(u+\theta)^{n+1-(n(\pi)+1)\sigma}}e^{-\frac{a}{\sigma}(u+\theta)^{\sigma}}du }{\int_0^{\infty} \frac{u^{n-1}}{(u+\theta)^{n-n(\pi)\sigma}}e^{-\frac{a}{\sigma}(u+\theta)^{\sigma}}du}\,.\label{eq2. part 1}
\end{align}
By the similar arguments in \cref{lemma1}, we use the Laplace method to find the limit of the nominator and denominator of \eqref{eq2. part 1}. Let 
\begin{align*}
&g_1(u)=\frac{u^{n}}{(u+\theta)^{n+1-(n(\pi)+1)\sigma}}e^{-\frac{a}{\sigma}(u+\theta)^{\sigma}}\,,
&g_2(u)=\frac{u^{n-1}}{(u+\theta)^{n-n(\pi)\sigma}}e^{-\frac{a}{\sigma}(u+\theta)^{\sigma}}\,.
\end{align*}
Thus,
\begin{align*}
&g_1'(u)=\lk n(u+\theta)+((n(\pi)+1)\sigma-(n+1))u-au(u+\theta)^{\sigma}\rk\frac{u^{n-1}}{(u+\theta)^{n+2-(n(\pi)+1)\sigma}}e^{-\frac{a}{\sigma}(u+\theta)^{\sigma}}\,,\\
&g_2'(u)=\lk (n-1)(u+\theta)+(n(\pi)\sigma-n)u-au(u+\theta)^{\sigma}\rk\frac{u^{n-2}}{(u+\theta)^{n+1-n(\pi)\sigma}}e^{-\frac{a}{\sigma}(u+\theta)^{\sigma}}\,
\end{align*}
As $n \rightarrow$, by the similar arguments in \cref{lemma1}, $g_1(u)$ and $g_2(u)$ attain their maximums  at $u_{1,n}$, $u_{2,n}$ that are both infinity large. Thus, $u_{1,n} \approx \lk \frac{(n(\pi)+1)\sigma -1}{a} \rk^{\frac{1}{\sigma}}-\theta$, and $u_{2,n} \approx \lk \frac{n(\pi)\sigma -1}{a} \rk^{\frac{1}{\sigma}}-\theta$. Therefore, followed by \eqref{eq2. part 1},
\begin{align}
&\lim_{n\rightarrow \infty} \frac{a}{n}\int_0^{\infty} \frac{u}{(u+\theta)^{1-\sigma}}f_{U_n}(u)du= \lim_{n \rightarrow \infty}\frac{a}{n}\frac{g_1(u_{1,n})}{g_2(u_{2,n})}\nonumber\\
&\qquad= \lim_{n \rightarrow \infty}\frac{a}{n} \frac{ \lk \lc\frac{(n(\pi)+1)\sigma -1}{a} \rc^{\frac{1}{\sigma}}-\theta\rk ^n  \lc\frac{(n(\pi)+1)\sigma -1}{a} \rc^{n(\pi) +1 -\frac{n+1}{\sigma}} e^{-(n(\pi)+1)-1/\sigma}}{\lk \lc\frac{n(\pi)\sigma -1}{a} \rc^{\frac{1}{\sigma}}-\theta\rk ^{n-1}  \lc\frac{n(\pi)\sigma -1}{a} \rc^{n(\pi)  -\frac{n}{\sigma}} e^{-n(\pi)-1/\sigma}}\nonumber\\
&\qquad = \lim_{n \rightarrow \infty} \frac{\lc(n(\pi)+1)\sigma -1 \rc^{(n(\pi)+1)-\frac{1}{\sigma}}}{n\lc n(\pi)\sigma -1 \rc^{n(\pi)-\frac{1}{\sigma}}}\frac{e^{-\theta a ^{1/\sigma} \lc\frac{n}{(n(\pi)+1)\sigma -1 }\rc ^{1/\sigma}n^{1-1/\sigma}}}{e^{-\theta a ^{1/\sigma} \lc\frac{n}{n(\pi)\sigma -1 }\rc ^{1/\sigma}(n-1)n^{-1/\sigma}}}e^{-1} \,.\label{eq3: part 1}
\end{align}
 Recall \cref{remark: ratio}, when $P_0$ is discrete, $\lim_{n \rightarrow \infty}\frac{n(\pi)}{n} = 0$,almost surely.  The limit in \eqref{eq3: part 1} becomes
 \begin{align*}
 &\lim_{n\rightarrow \infty} \frac{a}{n}\int_0^{\infty} \frac{u}{(u+\theta)^{1-\sigma}}f_{U_n}(u)du\\
 &\quad= \lim_{n\rightarrow \infty}  \frac{1}{n}\frac{\lc(n(\pi)+1)\sigma -1 \rc^{(n(\pi)+1)-\frac{1}{\sigma}}}{\lc n(\pi)\sigma -1 \rc^{n(\pi)-\frac{1}{\sigma}}}e^{-\theta a ^{1/\sigma} \lk \lc\frac{1}{(n(\pi)+1)\sigma -1 }\rc ^{1/\sigma}  n -\lc\frac{1}{n(\pi)\sigma -1 }\rc ^{1/\sigma}(n-1)\rk}e^{-1}\\
 &\quad = 0\,,
 \end{align*}
 where the  exponential part in the last equation converges to $0$ by the fact that $\lc\frac{1}{(n(\pi)+1)\sigma -1 }\rc ^{1/\sigma}-\lc\frac{1}{n(\pi)\sigma -1 }\rc ^{1/\sigma} >0$ for $\sigma \in (0,1)$.

 When $P_0$ is continuous,  $\lim_{n \rightarrow \infty}\frac{n(\pi)}{n} = 1$,almost surely. The limit in \eqref{eq3: part 1} becomes
  \begin{align*}
 &\lim_{n\rightarrow \infty} \frac{a}{n}\int_0^{\infty} \frac{u}{(u+\theta)^{1-\sigma}}f_{U_n}(u)du\\
 &\quad= \lim_{n\rightarrow \infty}  \frac{1}{n}\frac{\lc(n+1)\sigma -1 \rc^{(n+1)-\frac{1}{\sigma}}}{\lc n\sigma -1 \rc^{n-\frac{1}{\sigma}}}e^{-\theta a ^{1/\sigma} \lk \lc\frac{1}{(n)+1)\sigma -1 }\rc ^{1/\sigma}  n -\lc\frac{1}{n\sigma -1 }\rc ^{1/\sigma}(n-1)\rk}e^{-1}\\
 &\quad =\lim_{n\rightarrow \infty}\frac{\lc(n+1)\sigma -1 \rc}{n} \frac{\lc(n+1)\sigma -1 \rc^{-\frac{1}{\sigma}}}{\lc n\sigma -1 \rc^{-\frac{1}{\sigma}}} \lc 1+\frac{\frac{n\sigma}{n\sigma-1}}{n}\rc^ne^{-\theta a ^{1/\sigma} \lk \lc\frac{1}{(n)+1)\sigma -1 }\rc ^{1/\sigma}  n -\lc\frac{1}{n\sigma -1 }\rc ^{1/\sigma}(n-1)\rk}e^{-1}\\
&\quad = \sigma\,,
 \end{align*}
 where we emphasis that $\frac{1}{\sigma}>1$ when dealing with the convergence of the exponential part.
 
 By using the same arguments above for finding the limit of \eqref{eq.part 1 of NGGP mean}, we can find the limit of \eqref{eq.part 2 of NGGP mean}. We omit the details of the computation and can obtain the following results.
 
 When $P_0$ is discrete, 
 \begin{align*}
 &\lim_{n \rightarrow \infty} \frac{1}{n}\int_0^{\infty} \frac{u}{u+\theta}f_{U_n}(u)du =1\,.
 \end{align*}
 Thus,
 \begin{align*}
 &\lim_{n\rightarrow \infty}\E[P|\bX]= \lim_{n \rightarrow \infty} \frac{1}{n}\sum_{j=1}^{n(\pi)} (n_j-\sigma)\delta_{Y_j}=\lim_{n \rightarrow \infty}\mathbb{P}_n+ \frac{\sigma n(\pi)}{n}(H-\tilde{\mathbb{P}}_n)=P_0 \,,
 \end{align*}
where the last equation is due to $\lim_{n \rightarrow \infty}\frac{n(\pi)}{n} = 0$ and the Borel–Cantelli lemma . That is to say, the result in \eqref{eq: discrete case mean} is completed by combining the limit of \eqref{eq.part 1 of NGGP mean} and \eqref{eq.part 2 of NGGP mean} when $P_0$ is discrete.

 When $P_0$ is continuous, 
 \begin{align*}
 &\lim_{n \rightarrow \infty} \frac{1}{n}\int_0^{\infty} \frac{u}{u+\theta}f_{U_n}(u)du=1-\sigma\,. 
 \end{align*}
 Thus, combining the limit of \eqref{eq.part 1 of NGGP mean} and \eqref{eq.part 2 of NGGP mean}, we have
 \begin{align*}
 &\lim_{n\rightarrow \infty}\E[P|\bX]=  \lim_{n \rightarrow \infty}\sigma H+  \frac{1}{n}\sum_{j=1}^{n} (1-\sigma)\delta_{X_j}=\lim_{n\rightarrow \infty}\sigma H+ (1-\sigma)\mathbb{P}_n = \sigma H+ (1-\sigma)P_0 \,.
 \end{align*}
 Thus the proof of the result in \eqref{eq: continuous case mean} is completed.

\end{proof}

With the \cref{lemma: equivalent}, it is sufficient to proof \cref{thm: BVM} by only showing the convergences \eqref{eq1: BVM dicrete} and \eqref{eq1: BVM continuous}. The following lemma plays an important role in the proof of \cref{thm: BVM}. Here, we recall that an \textit{envelop function} of $\mathbb{F}$ is a measurable function $f_e \mathbb{X} \rightarrow \mathbb{R}$ such that $|f|<f_e$, for any $f \in \mathbb{F}$. 

\begin{lemma}\label{lemma: CLT of NGGP}
Let $\mathbb{F}$ be a finite set of $H-$square integrable functions. Assume that $n(\pi) \rightarrow \infty$ as $n\rightarrow \infty$, which includes the case when $P_0$ is continuous so that $n(\pi)=n$ and the case when $P_0$ is discrete but $n(\pi)$ converges to $\infty$ with a lower rate than $n$ do. Then 
\begin{align}
\sqrt{\sigma_n n(\pi)}(P_{U_n}-H)|\bX \leadsto \sqrt{1-\sigma}\mathbb{B}_{H}^o\,, \qquad \qquad a.s.,\label{eq. CLT lemma}
\end{align}
in $\mathbb{R}^{\mathbb{F}}$. The convergence holds a.s. in $l^{\infty}(\mathbb{F})$ with an envelop function $f_e$ such that $H(f_e^2)<\infty$, and thus the central limit theorem holds for $P_{U_n}|\bX$ in $l^{\infty}(\mathbb{F})$.
\end{lemma}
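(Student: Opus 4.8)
The plan is to reduce the statement, via \cref{lemma: NGGP posterior}(ii), to a central limit theorem for a single normalized generalized gamma random measure whose exponential rate parameter tends to infinity. Conditionally on $U_n=u$ the law of $P_{U_n}$ is $\NGGP(a,\sigma,\theta+u,H)$ and does not otherwise depend on $\bX$; writing $\beta=\theta+u$, it suffices to understand $P_\beta\sim\NGGP(a,\sigma,\beta,H)$ as $\beta\to\infty$ (as in the proof of \cref{lemma: equivalent}, I fix $\sigma_n$ and write $\sigma$ since $\sigma_n\to\sigma$ does not affect the argument). From the Laplace-type analysis already carried out in \cref{lemma1} and \cref{lemma: equivalent}, the posterior of $U_n$ concentrates at the maximizer $u_{2,n}$ for which $a(\theta+u_{2,n})^\sigma\sim\sigma_n n(\pi)$; thus the deterministic scaling $\sqrt{\sigma_n n(\pi)}$ is, up to a factor tending to $1$, the intrinsic scaling $\sqrt{a\beta^\sigma}$ of the CRM. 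I would first prove the CLT under the intrinsic scaling and then transfer to $\sqrt{\sigma_n n(\pi)}$.

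For the finite-dimensional convergence, represent $P_\beta$ through its unnormalized CRM $\tilde\mu_\beta$: for $\hat f:=f-Hf$ one has $P_\beta f-Hf=\tilde\mu_\beta(\hat f)/T$ with $T=\tilde\mu_\beta(\mathbb{X})$. A direct computation with the L\'evy intensity gives $\E[\tilde\mu_\beta(\hat f)]=0$, $\E[T]=a\beta^{\sigma-1}$, $\Var(T)=a(1-\sigma)\beta^{\sigma-2}$, and $\mathrm{Cov}(\tilde\mu_\beta(\hat f_i),\tilde\mu_\beta(\hat f_j))=a(1-\sigma)\beta^{\sigma-2}\big(H(f_if_j)-Hf_iHf_j\big)$, so that $\Var(T)/\E[T]^2\to 0$ and $T/\E[T]\to 1$. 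Hence $\sqrt{a\beta^\sigma}\,(P_\beta f-Hf)=\tfrac{\E[T]}{T}\cdot\tilde\mu_\beta(\hat f)/\sqrt{a\beta^{\sigma-2}}$ with $\E[T]/T\to 1$, and by Slutsky the problem reduces to a CLT for the centred infinitely divisible vector $\big(\tilde\mu_\beta(\hat f)\big)_{f\in\mathbb{F}}$. Its asymptotic normality I would read off the L\'evy--Khintchine exponent: by Cram\'er--Wold it suffices to treat a single centred functional, and writing its characteristic function as $\exp\big(\int(e^{i\xi y}-1-i\xi y)\,\mu_\beta(dy)\big)$ one checks the Lindeberg-type condition $\int_{|y|>\varepsilon\,\mathrm{sd}}y^2\,\mu_\beta(dy)/\mathrm{sd}^2\to 0$ for the jump measure $\mu_\beta$ of $\tilde\mu_\beta(\hat f)$. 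The substitution $t=\beta s$ turns the inner integral into a gamma tail with lower limit of order $\beta^{\sigma/2}\to\infty$, which vanishes; dominated convergence, using only $H\hat f^2<\infty$, then gives the Lindeberg condition, so the scaled numerator converges to a centred Gaussian with covariance $(1-\sigma)\big(H(f_if_j)-Hf_iHf_j\big)$. This is exactly $(1-\sigma)$ times the $H$-Brownian bridge covariance, so the limit is $\sqrt{1-\sigma}\,\mathbb{B}_H^o$; the bridge structure is automatic from centring by $Hf$, since constants are annihilated.

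To pass from the conditional-on-$U_n$ statement to the conditioning on $\bX$ demanded by the lemma, I would integrate the limit, which is free of $u$, over the posterior of $U_n$. Since $\frac{\sigma_n n(\pi)}{a(\theta+U_n)^\sigma}\to 1$ (a.s., by the concentration of the posterior of $U_n$ established in \cref{lemma1} and \cref{lemma: equivalent}), Slutsky converts the intrinsic scaling $\sqrt{a\beta^\sigma}$ into $\sqrt{\sigma_n n(\pi)}$, and because the Gaussian limit does not depend on the realised value of $U_n$ the mixture over its posterior converges to the same limit. This yields $\sqrt{\sigma_n n(\pi)}(P_{U_n}-H)\mid\bX\leadsto\sqrt{1-\sigma}\,\mathbb{B}_H^o$ for $P_0^\infty$-a.e.\ $\bX$, which is the convergence in $\mathbb{R}^{\mathbb{F}}$.

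Finally, for the uniform statement in $l^\infty(\mathbb{F})$ I would view $P_\beta f=\sum_k W_k f(X_k)$ with Dirichlet-type exchangeable weights $W_k=J_k/\sum_\ell J_\ell$ and $X_k$ i.i.d.\ $H$, i.e.\ as an exchangeably weighted (Bayesian-bootstrap-type) empirical process, and invoke the conditional multiplier/exchangeable-bootstrap empirical CLT (Theorems 2.11.1 and 2.11.9 in \citep{van1996}). The finite-dimensional limits are those found above, and asymptotic tightness (equicontinuity) holds under the envelope condition $H(f_e^2)<\infty$. This tightness step is the main obstacle: it requires controlling the weighted process uniformly over $\mathbb{F}$ rather than pointwise, and it is precisely where the $P_0$-Donsker/envelope hypotheses and the square-integrable envelope are genuinely used. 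Combining the finite-dimensional convergence with tightness yields the asserted a.s.\ convergence in $l^\infty(\mathbb{F})$, and hence the central limit theorem for $P_{U_n}\mid\bX$.
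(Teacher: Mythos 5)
Your finite--dimensional argument is correct, and it takes a genuinely different route from the paper. The paper reduces the problem, exactly as you do, to the behaviour of $\NGGP(a,\sigma,\theta+u,H)$ when the posterior of $U_n$ forces $\beta_n=a(u+\theta)^{\sigma}/\sigma\to\infty$ (this is its Laplace-method step with the threshold $M_n\to\infty$), but from there it works with the stick-breaking representation of $P_{U_n}$ and outsources the analytic core to its companion paper \citep{huzhang2022functional}: Proposition 3.4 there yields $\E[\sum_k w_k^2\mid\bX]=\frac{1-\sigma}{\sigma n(\pi)}+o(1/n(\pi))$, and Theorem 4.4 there delivers the (functional) CLT. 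You instead prove the CLT from scratch: your moment computations for the unnormalized CRM ($\E T=a\beta^{\sigma-1}$, $\Var T=a(1-\sigma)\beta^{\sigma-2}$, covariance $a(1-\sigma)\beta^{\sigma-2}\left(H(f_if_j)-Hf_iHf_j\right)$) are all correct, Slutsky disposes of the denominator since $\Var(T)/\E[T]^2=(1-\sigma)/(a\beta^{\sigma})\to0$, and the Lindeberg condition for the infinitely divisible numerator does reduce, after the substitution $t=\beta s$, to a gamma-tail integral with lower limit of order $\beta^{\sigma/2}$, which vanishes under $H\hat f^2<\infty$ by dominated convergence; the limiting covariance $(1-\sigma)\left(H(f_if_j)-Hf_iHf_j\right)$ is precisely that of $\sqrt{1-\sigma}\,\mathbb{B}_H^o$. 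This is more elementary and self-contained than the paper's treatment, whose finite-dimensional content is invisible because it lives entirely in the cited reference. Your transfer of the intrinsic scaling $\sqrt{a\beta^{\sigma}}$ into $\sqrt{\sigma_n n(\pi)}$ via posterior concentration of $U_n$ is also the same mechanism the paper uses; note, however, that \cref{lemma1} and the proof of \cref{lemma: equivalent} only locate the posterior \emph{mode}, so to make ``the mixture over the posterior converges to the same limit'' rigorous you still need the quantitative Laplace statement that the posterior mass of $\{u:|a(\theta+u)^{\sigma}/(\sigma_n n(\pi))-1|>\delta\}$ tends to $0$; the paper is no more explicit on this point, so you are on equal footing there.

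The genuine gap is the $l^{\infty}(\mathbb{F})$ step. The exchangeable-bootstrap and multiplier CLTs you invoke from \citep{van1996} (Praestgaard--Wellner type results, and the Lindeberg-type Theorems 2.11.1/2.11.9) are stated for triangular arrays with \emph{finitely many} ($n$ per row) weights or summands, whereas $P_{U_n}=\sum_{k=1}^{\infty}W_k\delta_{X_k}$ carries countably infinitely many normalized CRM jumps attached to fresh iid draws from $H$; none of those theorems applies off the shelf. Closing this requires either a truncation of the jump sequence with uniform control of the tail $\sum_{k>N}W_k$ over $\mathbb{F}$, together with verification of the weight conditions (concentration of $\sigma_n n(\pi)\sum_k W_k^2$ around $1-\sigma$, not merely its expectation, and a negligible-maximum condition), or an appeal to a functional CLT for normalized CRMs proved elsewhere --- which is exactly what the paper does by citing Theorem 4.4 of \citep{huzhang2022functional}, with the weight-moment computation imported as its Proposition 3.4. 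So your outline for the uniform statement names the right circle of ideas but is missing a real, if unsurprising, piece of work rather than just a citation.
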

\begin{proof}
The proof relies on the stick-breaking representation of $P_{U_n}$ in \citep{favaro2016} and the functional central limit theorem of NGGP in \citep{huzhang2022functional}. And similarly as discussed in the proof of last lemma, we use $\sigma$ instead of $\sigma_n$ to make the interpretation easy to read.

By section 4.2 in \citep{favaro2016}, $P_{U_n}$ admits a stick-breaking representation with dependent stick-breaking weights $\{v_i\}_{i=1}^{\infty}$, and the joint distribution of $\{v_i\}_{i=1}^{\infty}$ are given \citep{huzhang2022functional} as 
\begin{align}
&f(v_1,\cdots,v_k)=\frac{\beta_n^k\sigma^{k-1}}{[\Gamma(1-\sigma)]^k\Gamma(k\sigma)}\prod_{i=1}^kv_i^{-\sigma}(1-v_i)^{-(k-i)\sigma-1}e^{-\frac{\beta_n}{\prod_{i=1}^k(1-v_i)^{\sigma}}}\nonumber\\
&\qquad\qquad\qquad\qquad  \times
\int_0^{\infty}(1-(1+t)^{-\frac{1}{\sigma}})^{k\sigma-1}(1+t)^{k-1}e^{-\frac{\beta_n t}{\prod_{i=1}^k(1-v_i)^{\sigma}}}dt\,,\label{eq1. CLT lemma}
\end{align}
where $\beta_n=\frac{a(u+\theta)^{\sigma}}{\sigma}$. We will follow the same idea as in the proof of Proposition 3.4 and the theorem 4.4 in \citep{huzhang2022functional}. To obtain the similar result as the Proposition 3.4  in \citep{huzhang2022functional}, we will consider the asymptotic result of the following quantity as $n \rightarrow \infty$. 
\begin{align}
&\E\lk \sum_{k=1}^{\infty} w_k^p |\bX\rk =\E \lk \E\lk \sum_{k=1}^{\infty} w_k^p | U_n =u, \bX\rk \rk=\E \lk \E\lk \sum_{k=1}^{\infty} v_k^p\prod_{l=1}^{k-1}(1-v_l)^p | U_n =u, \bX\rk \rk \nonumber\\
&=\int_0^{\infty} \frac{\beta_n^k\sigma^{k-1}}{[\Gamma(1-\sigma)]^k\Gamma(k\sigma)}\prod_{i=1}^kv_i^{-\sigma}(1-v_i)^{-(n-i)\sigma-1}e^{-\frac{\beta_n}{\prod_{i=1}^k(1-v_i)^{\sigma}}}\nonumber\\
&\qquad\qquad\qquad\qquad  \times
\int_0^{\infty}(1-(1+t)^{-\frac{1}{\sigma}})^{k\sigma-1}(1+t)^{k-1}e^{-\frac{\beta_n t}{\prod_{i=1}^k(1-v_i)^{\sigma}}}dt f_{U_n}(u) du\,,\label{eq2. CLT lemma}
\end{align}
where $p$ is any positive integer.  To evaluate \eqref{eq2. CLT lemma} as $n \rightarrow \infty$, we shall have a further analysis of the integral with respect to $u$, which is the only term that relates to $n$. Consider the following integral for any $b>0$, and any positive integer $k$.
\begin{align}
& \int_0^{\infty} \beta_n^k e^{b\beta_n}f_{U_n}(u) du=\frac{\lc \frac{a}{\sigma}\rc^k \int_0^{\infty} \frac{u^{n-1}}{(u+\theta)^{n-(n(\pi)+k)\sigma}}e^{-\frac{(b+1)a}{\sigma}(u+\theta)^{\sigma}}du }{\int_0^{\infty} \frac{u^{n-1}}{(u+\theta)^{n-n(\pi)\sigma}}e^{-\frac{a}{\sigma}(u+\theta)^{\sigma}}du}\nonumber\\
&= \frac{\lc \frac{a}{\sigma}\rc^k \lc \int_0^{M} \frac{u^{n-1}}{(u+\theta)^{n-(n(\pi)+k)\sigma}}e^{-\frac{(b+1)a}{\sigma}(u+\theta)^{\sigma}}du + \int_M^{\infty} \frac{u^{n-1}}{(u+\theta)^{n-(n(\pi)+k)\sigma}}e^{-\frac{(b+1)a}{\sigma}(u+\theta)^{\sigma}}du\rc}{\int_0^{\infty} \frac{u^{n-1}}{(u+\theta)^{n-n(\pi)\sigma}}e^{-\frac{a}{\sigma}(u+\theta)^{\sigma}}du} \,,\label{eq3. CLT lemma}
\end{align}
for any $M>0$. For any $n$ and any $M$, we have 
\begin{align*}
&\int_0^{M} \frac{u^{n-1}}{(u+\theta)^{n-(n(\pi)+k)\sigma}}e^{-\frac{(b+1)a}{\sigma}(u+\theta)^{\sigma}}du\\
&=\int_0^{M} \lc \frac{u}{(u+\theta)^{1-\sigma}} \rc^{n-1} \frac{1}{(u+\theta)^{(n-n(\pi)-k-1)\sigma+1}}e^{-\frac{(b+1)a}{\sigma}(u+\theta)^{\sigma}}du\\
&\leq \lc \frac{M}{(M+\theta)^{1-\sigma}} \rc^{n-1} \frac{(M+\theta)^{(n(\pi)+k+1-n)\sigma}}{(n(\pi)+k+1-n)\sigma} e^{-\frac{(b+1)a}{\sigma}(\theta)^{\sigma}}\,,
\end{align*}
which goes to $0$ as $n \rightarrow \infty$ due to the fact that either $\lim_{n \rightarrow \infty}\frac{n(\pi)}{n}=0$ or $n(\pi)=n$.  The last inequality holds because $\lc \frac{u}{(u+\theta)^{1-\sigma}} \rc^{n-1}$ is nondecreasing in $u$ for any $\sigma\in[0,1)$. Thus when $n \rightarrow \infty$, we have 
\begin{align*}
\int_0^{\infty} \beta_n^k e^{b\beta_n}f_{U_n}(u) du = \frac{\lc \frac{a}{\sigma}\rc^k \int_M^{\infty} \frac{u^{n-1}}{(u+\theta)^{n-(n(\pi)+k)\sigma}}e^{-\frac{(b+1)a}{\sigma}(u+\theta)^{\sigma}}du }{\int_0^{\infty} \frac{u^{n-1}}{(u+\theta)^{n-n(\pi)\sigma}}e^{-\frac{a}{\sigma}(u+\theta)^{\sigma}}du}\,.
\end{align*}
This would imply
\begin{align}
&\E\lk \sum_{k=1}^{\infty} w_k^p |\bX\rk =\int_{M_n}^{\infty} \frac{\beta_n^k\sigma^{k-1}}{[\Gamma(1-\sigma)]^k\Gamma(k\sigma)}\prod_{i=1}^kv_i^{-\sigma}(1-v_i)^{-(n-i)\sigma-1}e^{-\frac{\beta_n}{\prod_{i=1}^k(1-v_i)^{\sigma}}}\nonumber\\
&\qquad\qquad\qquad\qquad  \times
\int_0^{\infty}(1-(1+t)^{-\frac{1}{\sigma}})^{k\sigma-1}(1+t)^{k-1}e^{-\frac{\beta_n t}{\prod_{i=1}^k(1-v_i)^{\sigma}}}dt f_{U_n}(u) du\,,\label{eq4. CLT lemma}
\end{align}
in which we choose $M=M_n$ that goes to $\infty$ as $n \rightarrow \infty$. In this case, when $n \rightarrow \infty$, $\beta_n \rightarrow \infty$ as well and we are safe to use the results in Proposition 3.4  in \citep{huzhang2022functional} to obtain that when $n \rightarrow \infty$ (thus $n(\pi) \rightarrow \infty$)
\begin{align*}
\mathbb{E}\left[\sum_{n=1}^{\infty}w_n^2\right]= \int_{M_n}^{\infty} \lc \frac{1-\sigma}{a(u+\theta)^{\sigma}}\rc f_{U_n}(u)du=\frac{1-\sigma}{n(\pi)\sigma}+o\left(\frac{1}{n(\pi)}\right)\,,
\end{align*}
where the last equation can be computed by the same argument as in \eqref{eq2. part 1} and the computation afterwards.  The result of \eqref{eq. CLT lemma} follows immediately by applying the theorem 4.4 in \citep{huzhang2022functional}.
\end{proof}
By the above lemma and its proof, it is interesting to see that when $n \rightarrow \infty$, we can have $P_{n(\pi)}\overset{d}{=}P_{U_n}$, where $P_{n(\pi)}\sim \NGGP(n(\pi), \sigma,\theta,H)$ for any $n(\pi)\rightarrow \infty$. Thus, we can replace $P_{U_n}$ by $P_{n(\pi)}$ in the proof of \cref{thm: BVM}, the benefit of such replacement is $P_{n(\pi)}$ is independent of $\kappa_n$ when $n \rightarrow \infty$.

The next lemma provides the convergence of $\kappa_n$.
\begin{lemma}\label{lemma: convergence of kappa}
\begin{itemize}
\item[(i)] If $P_0$ is discrete,  when $n \rightarrow \infty$,
\begin{align}
\sqrt{n}(\kappa_n-\frac{\sigma_n n(\pi)}{n}) \leadsto 0\, \qquad\quad a.s.\nonumber
\end{align}
\item[(ii)] If $P_0$ is continuous, when $n \rightarrow \infty$, $\kappa_n \rightarrow \sigma$ in probability.
\end{itemize}
\end{lemma}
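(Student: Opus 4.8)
The plan is to condition on the latent variable $U_n$ and exploit the conditional independence and explicit laws in Lemma~\ref{lemma: NGGP posterior} (as in the previous proofs I suppress $\sigma_n$ and write $\sigma$, since $\sigma_n\to\sigma$ does not affect the limits). Given $U_n=u$, the jumps $J_1,\dots,J_{n(\pi)}$ are independent $G(n_j-\sigma,u+\theta)$ variables, so their sum $S:=\sum_{j=1}^{n(\pi)}J_j$ is $G(n-n(\pi)\sigma,\,u+\theta)$ because $\sum_{j}n_j=n$, and it is independent of $T_{(U_n)}$, whose Laplace transform is $\exp\{-\tfrac{a}{\sigma}[(\lambda+\theta+u)^{\sigma}-(\theta+u)^{\sigma}]\}$ (the intensity of Example~\ref{example: NGGP} with base parameter $\theta+u$). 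First I would rescale by $u+\theta$: putting $\widetilde T:=(U_n+\theta)T_{(U_n)}$ and $W:=(U_n+\theta)S$ leaves $\kappa_n=\widetilde T/(\widetilde T+W)$ unchanged, while $W\mid U_n\sim G(n-n(\pi)\sigma,1)$ becomes free of $u$ and $\widetilde T\mid U_n$ is the total mass of a generalized gamma process with parameter $\beta_n:=\tfrac{a}{\sigma}(\theta+U_n)^{\sigma}$, so that $\mathbb{E}[\widetilde T\mid U_n]=\sigma\beta_n$ and $\Var[\widetilde T\mid U_n]=\sigma(1-\sigma)\beta_n$.

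Next I would locate $U_n$ by the Laplace-method analysis already used in the proofs of Lemma~\ref{lemma1} and Lemma~\ref{lemma: equivalent}: its log-density is maximized at $u_n^{\ast}$ with $(\theta+u_n^{\ast})^{\sigma}\approx (n(\pi)\sigma-1)/a$, so $\beta_n$ concentrates around $n(\pi)-1/\sigma$ with $\Var(\beta_n)=O(n(\pi))$. Consequently $\mathbb{E}[\widetilde T]\approx\sigma n(\pi)$, $\mathbb{E}[W]=n-n(\pi)\sigma$, and the denominator $\widetilde T+W$ concentrates around $n$.

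With these moments I would Taylor-expand $\kappa_n=\widetilde T/(\widetilde T+W)$ about $m_T:=\sigma\beta_n$, $m_W:=n-n(\pi)\sigma$, $m:=m_T+m_W$, writing $\widetilde T=m_T(1+\delta_T)$ and $W=m_W(1+\delta_W)$ with $\Var(\delta_T\mid U_n)=O(1/n(\pi))$ and $\Var(\delta_W\mid U_n)=O(1/n)$, to obtain
\begin{align*}
\kappa_n-\frac{\sigma n(\pi)}{n}=\Big(\frac{m_T}{m}-\frac{\sigma n(\pi)}{n}\Big)+\frac{m_Tm_W}{m^2}\,(\delta_T-\delta_W)+R_n,
\end{align*}
where the first (deterministic) term is $O(1/n)$ and $R_n$ collects the quadratic remainder together with the fluctuation of $m_T/m$ coming from $U_n$. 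Multiplying through by $\sqrt n$, the bias contributes $O(1/\sqrt n)\to0$; the leading fluctuation satisfies $\sqrt n\,\tfrac{m_Tm_W}{m^2}\delta_T=O_p\big(\sqrt{n(\pi)/n}\big)$ and $\sqrt n\,\tfrac{m_Tm_W}{m^2}\delta_W=O_p\big(n(\pi)/n\big)$, both tending to $0$ since $n(\pi)/n\to0$ a.s.\ by Proposition~\ref{remark: ratio}; and a direct variance estimate gives $n\,\Var(\kappa_n)=O(n(\pi)/n)\to0$, which also absorbs $R_n$. This proves $\sqrt n(\kappa_n-\sigma_n n(\pi)/n)\to0$ in posterior probability for a.e.\ $\bX$, i.e.\ (i). For (ii) the only change is $n(\pi)=n$, whence $m_T\approx\sigma n$, $m_W=(1-\sigma)n$, $m_T/m\to\sigma$, and all fluctuations are $O_p(1/\sqrt n)$, giving $\kappa_n\to\sigma$ in probability.

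The hard part will be making the $o(1/\sqrt n)$ claim of (i) rigorous: it forces the deterministic bias $m_T/m-\sigma n(\pi)/n$ to be controlled at an order strictly below $1/\sqrt n$, so I must retain the subleading term $\beta_n=n(\pi)-1/\sigma+\cdots$ in the Laplace expansion of the law of $U_n$ and, at the same time, bound the cross-term between the $U_n$-fluctuation and the conditional fluctuations of $\widetilde T$ and $W$. Upgrading the in-probability statement to the asserted a.s.-in-$\bX$ form is then routine, combining the exponential gamma-tail bounds for $W$ and $\widetilde T$ with the a.s.\ control of $n(\pi)$ through a Borel–Cantelli argument.
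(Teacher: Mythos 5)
Your plan is a genuinely different route from the paper's, and its structural ingredients are correct: given $U_n=u$ the jumps do sum to a $G(n-n(\pi)\sigma,u+\theta)$ variable independent of $T_{(U_n)}$, the rescaled mass $\widetilde T$ does satisfy $\E[\widetilde T\mid U_n]=\sigma\beta_n$ and $\Var[\widetilde T\mid U_n]=\sigma(1-\sigma)\beta_n$, and your delta-method decomposition of $\kappa_n-\sigma n(\pi)/n$ has the right shape. The paper instead never linearizes: it computes $\E[\kappa_n]$ and $\E[\kappa_n^2]$ in closed integral form via the identity $\kappa_n=\int_0^{\infty}T_{(U_n)}e^{-y(T_{(U_n)}+\sum_j J_j)}\,dy$, Laplace transforms, and the substitution $v=u+y$ which integrates $U_n$ out exactly; the resulting ratio of integrals is exactly the quantity \eqref{eq2. part 1} analysed in \cref{lemma: equivalent}, yielding $\E[\kappa_n]\approx\frac{(n(\pi)+1)\sigma_n-1}{n}$ and an explicit $\Var[\kappa_n]$ with $n\Var[\kappa_n]=O(n(\pi)/n)$, after which (i) and (ii) follow from Chebyshev. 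The paper's route thus needs no statement whatsoever about the posterior spread of $U_n$, which is precisely the quantity your route cannot avoid.

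That is where your proposal has a genuine gap, and it is worse than the ``subleading terms'' you flag. Your expansion is organized around the claims that $\beta_n$ concentrates at $n(\pi)-1/\sigma$ with $\Var(\beta_n)=O(n(\pi))$ and that the deterministic bias is $O(1/n)$; neither is proved, and both are false in a regime the lemma must cover. In the variable $\beta=\tfrac a\sigma(U_n+\theta)^\sigma$ the posterior density of $\beta_n$ is proportional to $w(\beta)^{n-1}\beta^{n(\pi)-1}e^{-\beta}$ with $w(\beta)=U_n/(U_n+\theta)$, so $\ln w(\beta)\approx-\theta(a/\sigma)^{1/\sigma}\beta^{-1/\sigma}$; this factor, which your sketch discards, carries a factor $n-1$ in the exponent. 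Whenever $n(\pi)$ is bounded or grows slower than $n^{\sigma/(1+\sigma)}$ (e.g.\ $P_0$ with finitely many atoms, such as $P_1$ in \cref{subsection: numerical}), this term dominates the gamma kernel: the posterior of $\beta_n$ then concentrates at values of order $n^{\sigma/(1+\sigma)}$ rather than $n(\pi)$, and consequently $\E[\kappa_n]-\sigma n(\pi)/n$ is of order $n^{-1/(1+\sigma)}$, not $O(1/n)$. The conclusion of the lemma survives only because $n^{-1/(1+\sigma)}=o(n^{-1/2})$ for every $\sigma<1$ --- but an expansion centered as yours is cannot detect this, so the $\sqrt n$-scaling step would fail as written. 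To repair it you must carry the $w^{n-1}$ factor through the Laplace analysis (identifying the correct, $\theta$-dependent center in each regime of $n(\pi)$), or simply replace the delta-method step by the paper's exact-moment computation, which bypasses the issue entirely.
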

\begin{proof}
We shall compute the moments of $\kappa_n=\frac{T_{(U_n)}}{T_{(U_n)}+\sum_{j=1}^{n(\pi)}J_j}$ by the same method that we use in the proof of \cref{them. moment}. To make it clear, we present the details for $\E[\kappa_n]$ as follows.

\begin{align*}
&\E[\kappa_n]=\E \lk \E[\kappa_n|U_n]\rk=\E \lk \E \lk \int_0^{\infty} e^{-(T_{(U_n)}+\sum_{j=1}^{n(\pi)}J_j)y}T_{(U_n)}dy | U_n\rk\rk\\
&=\E\lk \int_0^{\infty} \lc-\frac{d}{dy}\E\lk e^{- T_{(U_n)} y }|U_n\rk \rc \prod_{j=1}^{n(\pi)} \E\lk e^{-yJ_j}|U_n\rk dy \rk\\
&=\E \lk \int_0^{\infty} a (y+U_n+\theta)^{\sigma_n-1} e^{-\frac{a}{\sigma_n} \lc (y+U_n+\theta)^{\sigma_n}-(U_n+\theta)^{\sigma_n} \rc} \lc\frac{U_n+\theta}{y+U_n+\theta} \rc^{n-n(\pi)\sigma_n} dy \rk\,,
\end{align*}
where the last equation is a direct use of the Laplace transform of $T_{U_n}$ and the distribution of $J_j$ in \cref{lemma: NGGP posterior}. Solving the expectation with respect to $U_n$, we have 
\begin{align*}
\E[\kappa_n]=\frac{\int_0^{\infty}\int_0^{\infty}a \frac{u^{n-1}}{(y+u+\theta)^{(n+1)-(n(\pi)+1)\sigma_n}}e^{-\frac{a}{\sigma_n}(y+u+\theta)^{\sigma_n}}dy du}{\int_0^{\infty} \frac{u^{n-1}}{(u+\theta)^{n-n(\pi)\sigma_n}}e^{-\frac{a}{\sigma_n}(u+\theta)^{\sigma_n}}du}\,.
\end{align*}
By the substitution $v=u+y$ and $z=u$ on the nominator of the above form.
\begin{align*}
\E[\kappa_n]=\frac{a}{n}\frac{\int_0^{\infty} \frac{v^{n}}{(v+\theta)^{(n+1)-(n(\pi)+1)\sigma_n}}e^{-\frac{a}{\sigma_n}(v+\theta)^{\sigma_n}}dv}{\int_0^{\infty} \frac{u^{n-1}}{(u+\theta)^{n-n(\pi)\sigma_n}}e^{-\frac{a}{\sigma_n}(u+\theta)^{\sigma_n}}du}\,,
\end{align*}
which implies $\E[\kappa_n]=\frac{(n(\pi)+1)\sigma_n-1}{n}$ by the analysis of \ref{eq.part 1 of NGGP mean}. And we have $\E[\kappa_n]\rightarrow 0$ if $P_0$ is discrete and $\E[\kappa_n]\rightarrow \sigma$ if $P_0$ is continuous when $n\rightarrow \infty$.

Similarly, we can obtain the second moment of $\kappa_n$ in the same way as $n \rightarrow \infty$.
\begin{align*}
\E[\kappa_n^2]=\frac{\lk (n(\pi)+2)\sigma_n-1\rk^2}{n(n+1)}+\frac{\lk (n(\pi)+2)\sigma_n-1\rk(1-\sigma_n)}{an(n+1)}\,,
\end{align*}
followed by which, we have $$\Var[\kappa_n]=\frac{\lk (n(\pi)+2)\sigma_n-1\rk(1-\sigma_n)}{an(n+1)}+\frac{2n(\pi)+3\sigma_n^2-2\sigma_n}{n(n+1)}-\frac{\lk(n(\pi)+1)\sigma_n-1 \rk^2}{n^2(n+1)}.$$  And $\lim_{n \rightarrow \infty} \Var[\kappa_n]=0$ for both continuous and discrete $P_0$. 
This complete the proof of the lemma.
\end{proof}
Now, \cref{thm: BVM} can be proved by using the previous lemmas. And we give the details as follows.
\begin{proof}{\bf Proof of \cref{thm: BVM}}

We proof \cref{thm: BVM} in two parts corresponding to when $P_0$ is discrete and when $P_0$ is continuous. We denote $R_n=\sum_{j=1}^{n(\pi)} \frac{J_j}{\sum_{j=1}^{n(\pi)}} \delta_{Y_j}$. Then, $R_n=\sum_{j=1}^{n(\pi)} D_{n,j}\delta_{Y_j}$.

\textbf{(i) When $P_0$ is discrete.}

It is convenient to decompose $\sqrt{n}\left(P-\mathbb{P}_n-\frac{\sigma_n n(\pi)}{n}(H-\tilde{\mathbb{P}}_n) \right)|\bX $ as 
\begin{align}
&\sqrt{n}\lc \kappa_n-\frac{\sigma_n n(\pi)}{n} \rc \lc P_{U_n}-R_n \rc +  \lc \sqrt{\sigma_n  n(\pi)} (P_{U_n}-H) \sqrt{ \frac{\sigma_n n(\pi)}{n}}\rc \nonumber\\
&\qquad\qquad +\sqrt{n}\lc R_n\lc 1-\frac{\sigma_n n(\pi)}{n} \rc -\lc \mathbb{P}_n-\frac{\sigma_n n(\pi)}{n} \tilde{\mathbb{P}}_n \rc\rc |\bX \,. \label{eq1. proof of BVM}
\end{align}

The first term in decomposition \eqref{eq1. proof of BVM} converges to $0$ by using \cref{lemma: convergence of kappa} and the fact that $P_{U_n}-R_n$ is uniformly bounded. The second term in decomposition \eqref{eq1. proof of BVM} converges to $0$ by using \cref{lemma: CLT of NGGP} and the fact that $\frac{\sigma_n n(\pi)}{n} \rightarrow 0$ a.s.. And the convergence for the first two terms in decomposition \eqref{eq1. proof of BVM} holds for both $n(\pi)$ is finite and goes to $\infty $ when $n \rightarrow \infty$.

The convergence of the last term in decomposition \eqref{eq1. proof of BVM} relaying on the gamma representation of $D_{n,j}$ in $R_n$. For each $j \in \{1, \cdots, n(\pi)\}$, we rewrite
\begin{align*}
D_{n,j}=\frac{\gamma_{j,0}+\sum_{i=1}^{n_j-1} \gamma_{j,i}}{\sum_{j=1}^{n(\pi)} \lc \gamma_{j,0}+\sum_{i=1}^{n_j-1} \gamma_{j,i} \rc}\,,
\end{align*}
where the independent random variables $\gamma_{j,0} \sim G(1-\sigma_n, 1)$ and $\gamma_{j,i} \sim G(1,1)$ for all $j$ and all $i$. That is to say, there are $n$ $\gamma_{j,i}$'s ($i$ can take $0$) for $j \in \{1, \cdots, n(\pi)\}$, during which, there are $n(\pi)$ independent $G(1-\sigma_n, 1)$ random variables and $n-n(\pi)$ independent $G(1, 1)$ random variables. Relabel all these $n$ gamma random variables as $\{\tilde{\gamma}_{n,l}\}_{l=1}^n$ (the order doesn't matter). Then
\begin{align}
R_n=\sum_{j=1}^{n(\pi)} D_{n,j}\delta_{Y_j}=\frac{n^{-1} \sum_{l=1}^n \tilde{\gamma}_{n,l} \delta_{X_l} }{n^{-1} \sum_{l=1}^n \tilde{\gamma}_{n,l} }\,.
\end{align} 
To make the interpretation clear, we denote $R_nf =\frac{ \bar{R}_nf}{\bar{R}_n1}$, where $\bar{R}_nf=\frac{\sum_{l=1}^n \tilde{\gamma}_{n,l} f(X_l)}{n}$ and $\bar{R}_n1=\frac{\sum_{l=1}^n \tilde{\gamma}_{n,l}}{n}$. Thus,
\begin{align}
&\sqrt{n}\lc R_n\lc 1-\frac{\sigma_n n(\pi)}{n} \rc -\lc \mathbb{P}_n-\frac{\sigma_n n(\pi)}{n} \tilde{\mathbb{P}}_n \rc\rc f\nonumber\\
&=-R_nf \sqrt{n} \lc \bar{R}_n1-\lc 1-\frac{\sigma_n n(\pi)}{n}\rc \rc+\sqrt{n} \lc \bar{R}_nf-\lc \mathbb{P}_nf-\frac{\sigma_n n(\pi)}{n} \tilde{\mathbb{P}}_nf \rc \rc\,.\label{eq2. proof of BVM}
\end{align}
It is clear that $\mathbb{P}_nf+\frac{\sigma_n n(\pi)}{n} \tilde{\mathbb{P}}_nf \rightarrow P_0f$  almost surely, by the Borel-Cantelli lemma and the fact that $\mathbb{F}$ is a finite set such that $P_0(f^2) < \infty$. Thus, $\mathbb{F}$ is a $P_0-$Donsker class. By the distributions and independence of $\{\tilde{\gamma}_{n,l}\}_{l=1}^n$, we have 
\begin{align*}
&\E\lk\bar{R}_n1 \rk=\E\lk \frac{\sum_{l=1}^n \tilde{\gamma}_{n,l}}{n} \rk=1-\frac{\sigma_n n(\pi)}{n} \rightarrow 1 \qquad \text{a.s.}\,,\\
&\Var\lk \bar{R}_n1\rk=\Var\lk \frac{\sum_{l=1}^n \tilde{\gamma}_{n,l}}{n} \rk=\frac{1}{n}\sum_{l=1}^n\Var \lk \tilde{\gamma}_{n,l}\rk=\frac{1}{n}-\frac{\sigma_n n(\pi)}{n}\rightarrow 0 \qquad \text{a.s.}\,.
\end{align*}
Thus, we have the convergence $\sqrt{n} \lc \bar{R}_n1-\lc 1-\frac{\sigma_n n(\pi)}{n}\rc \rc \leadsto  0$. By noting that $R_n$ is uniformly bounded, we obtain $-R_nf \sqrt{n} \lc \bar{R}_n1-\lc 1-\frac{\sigma_n n(\pi)}{n}\rc \rc \leadsto 0$. To find the convergence of $\sqrt{n} \lc \bar{R}_nf-\lc \mathbb{P}_nf-\frac{\sigma_n n(\pi)}{n} \tilde{\mathbb{P}}_nf \rc \rc$, we follow the similar way and check the Linderberg-Feller condition as follows.
\begin{align*}
&\E\lk\bar{R}_nf \rk=\frac{1}{n}\sum_{l=1}^n\E\lk  \tilde{\gamma}_{n,l} \rk f(X_l)=\mathbb{P}_nf-\frac{\sigma_n n(\pi)}{n} \tilde{\mathbb{P}}_nf \,,\\
&\Var\lk\bar{R}_nf \rk=\frac{1}{n^2}\sum_{l=1}^n\Var\lk  \tilde{\gamma}_{n,l} \rk f^2(X_l)=\frac{1}{n}\lc \mathbb{P}_nf^2-\frac{\sigma_n n(\pi)}{n} \tilde{\mathbb{P}}_nf^2 \rc\,,\\
&\frac{1}{n	}\sum_{l=1}^n \E \lk \tilde{\gamma}_{n,l}^2f^2(X_l) \mathbbm{1}_{|\tilde{\gamma}_{n,l}f(X_l)|>\epsilon \sqrt{n}}  \rk \\
&\quad\leq \max \lc \E \lk \gamma_{j,0}^2f^2(X_l) \mathbbm{1}_{|\gamma_{j,0}|\max_{1\leq l \leq n}| f(X_l)|>\epsilon \sqrt{n}}  \rk, \E \lk \gamma_{j,i}^2f^2(X_l) \mathbbm{1}_{|\gamma_{j,i}|\max_{1\leq l \leq n}| f(X_l)|>\epsilon \sqrt{n}}  \rk\rc \mathbb{P}_nf^2\,,
\end{align*}
where the last inequality is a verification of Linderberg-Feller condition, and the right hand side converges to $0$ for every sequence $\bX$, since $P_0f^2<\infty$ and $\max_{1\leq l \leq n}| f(X_l)/\sqrt{n} \rightarrow 0$. By the Cram\'{e}r-Wold device and the linearity of $f$, we have $\sqrt{n} \lc \bar{R}_nf-\lc \mathbb{P}_nf-\frac{\sigma_n n(\pi)}{n} \tilde{\mathbb{P}}_nf \rc \rc \leadsto \mathbb{B}_{P_0}^of$ for any $f \in \mathbb{F}$.

To show the convergence in $l^{\infty}(\mathbb{F})$ for any $P_0-$Donsker class, we shall prove the asymptotic tightness, see e.g., Theorem 1.5.4 in \citep{van1996}. The multipliers of the multiplier process $\frac{1}{\sqrt{n}}\sum_{l=1}^n(\tilde{\gamma}_{n,l}-\E \lk \tilde{\gamma}_{n,l}\rk) f(X_l)$ are independent with $0$ means. Thus, the multiplier central limit theorem in Theorem 2.9.7 \citep{van1996} can be applied once we have the following inequality for any collection $\mathbb{H}$ of functions.
\begin{align*}
\E_{\tilde{\gamma}}\left\| \sum_{l=1}^n  (\tilde{\gamma}_{n,l}-\E \lk \tilde{\gamma}_{n,l}\rk) f(X_l) \right\|_{\mathbb{H}}^* \leq \E_{\tilde{\gamma}, \tilde{\gamma}'}\left\| \sum_{l=1}^n  (\tilde{\gamma}_{n,l}-\E \lk \tilde{\gamma}_{n,l}\rk + \tilde{\gamma}_{n,l}'-\E \lk \tilde{\gamma}_{n,l}'\rk) f(X_l) \right\|_{\mathbb{H}}^*\,,
\end{align*}
by Jensen's inequality, for any random variable $\tilde{\gamma}_{n,l}'$ independent of $\tilde{\gamma}_{n,l}$. It is safe to choose all $\tilde{\gamma}_{n,l}' \overset{iid}{\sim} G(1,1)$ and $\tilde{\gamma}_{n,l}\overset{iid}{\sim} G(1,1)$. Then, the multiplier central limit theorem that is given as Theorem 2.9.7 (see also 2.9.6,2.9.9,3.6.13) in \citep{van1996}, the asymptotic tightness follows immediately. (We apply the inequality with $\mathbb{H}$ to be the set of $f_1-f_2$ for any $f_1, f_2 \in \mathbb{F}$, with $L_2(P_0)$ norm of $f_1-P_0f_1-(f_2-P_0f_2)$ smaller than $\delta$.)

This complete the proof of the theorem when $P_0$ is discrete.

\textbf{(ii) When $P_0$ is continuous.}

In this case, $n(\pi)=n$. We can decompose $\sqrt{n}\left(P- \lk (1-\sigma)\mathbb{P}_n+\sigma H \rk \right)|\bX$ as 
\begin{align}
\sqrt{n} \lc P_{U_n}-H \rc \kappa_n+\sqrt{n} (1- \kappa_n) (R_n-\mathbb{P}_n)+\sqrt{n}(\kappa_n-\sigma_n)(H-\mathbb{P}_n)\,.\label{eq3. proof of BVM}
\end{align}

For the convergence of the first term in \eqref{eq3. proof of BVM}, we first use the discussion below the proof of \cref{lemma: CLT of NGGP} to use $P_n \overset{d}{=} P_{U_n}$ when $n\rightarrow \infty$, where $P_n \sim \NGGP(n, \sigma, \theta, H)$. Thus, we can consider the convergence of $\sqrt{n} \lc P_{n}-H \rc \kappa_n$ instead of $\sqrt{n} \lc P_{U_n}-H \rc \kappa_n$, the benefit of the former form is $P_n$ and $\kappa_n$ are independent. Thus, by \cref{lemma: convergence of kappa}, $\kappa_n \rightarrow \sigma$ in probability. By using the result in \cref{lemma: CLT of NGGP}, we have $\sqrt{n} \lc P_{n}-H \rc \leadsto \sqrt{\frac{1-\sigma}{\sigma}}\mathbb{B}_{H}^o$ a.s.. Thus, we have $\sqrt{n} \lc P_{U_n}-H \rc \kappa_n \leadsto \sqrt{\sigma(1-\sigma)}$.

For the second term in \eqref{eq3. proof of BVM}, $R_n=\sum_{j=1}^n D_{n,j} \delta_{X_j}$, where $D_{n,j}=\frac{\gamma_j}{\sum_{j=1}^n \gamma_j}$ with $\gamma \overset{iid}{\sim} G(1-\sigma_n, 1)$. A direct application of the result of Theorem 2.1 in \citep{praestgaard1993exchangeably} implies $\sqrt{n}(R_n-\mathbb{P}_n) \leadsto \frac{1}{\sqrt{1-\sigma}} \mathbb{B}_{P_0}^o $ a.s., in $l^{\infty}(\mathbb{F})$ if there is a $P_0-$square-integrable envelope function for $\mathbb{F}$. Furthermore, the convergence in probability is a direct application of Theorem 2.9.7  in \citep{van1996}. By noting the fact that $(1- \kappa_n) \rightarrow 1-\sigma$ in probability, we have $\sqrt{n} (1- \kappa_n) (R_n-\mathbb{P}_n) \leadsto \sqrt{1-\sigma}\mathbb{B}_{P_0}^o$ a.s.  in $l^{\infty}(\mathbb{F})$.

For the last term in \eqref{eq3. proof of BVM}, we follow the same argument as that in \cref{lemma: convergence of kappa} and will have $\Var[\sqrt{n}\kappa_n]=(1-\sigma_n)\sigma_n \rightarrow (1-\sigma)\sigma$, thus 
$\sqrt{n}(\kappa_n-\sigma_n) \leadsto \sqrt{\sigma(1-\sigma)}Z$. Furthermore, by the Borel-Cantelli lemma, $\mathbb{P}_n \rightarrow P_0$ a.s., and thus $\sqrt{n}(\kappa_n-\sigma_n)(H-\mathbb{P}_n) \leadsto \sqrt{\sigma(1-\sigma)}Z(H-P_0)$.

The result in Theorem \ref{thm: BVM} when $P_0$ is continuous follows by combining the convergences of the three terms in \eqref{eq3. proof of BVM}.
\end{proof}
\end{document}